\spnewtheorem{fact}[theorem]{Fact}{\bfseries}{\itshape}
\spnewtheorem*{well-ordering_proof_CR}{Lemma~6}
{\bfseries}{\itshape}
\DeclareMathSymbol{\mhyph}{\mathalpha}{operators}{`-}
\newcommand{\deq}{\mathbin{\dot=}}
\newcommand{\dra}{\mathbin{\dot\rightarrow}}
\newcommand{\dfa}{\mathbin{\dot\forall}}
\newcommand*{\sub}{\mathrm{sub}}
\newcommand*{\num}[1]{\underline{#1}}
\newcommand*{\pred}[1]{\mathrm{#1}}
\newcommand*{\gn}[1]{\ulcorner{#1}\urcorner}
\newcommand\pole{{\protect\mathpalette{\protect\polehelper}{\bot}}} \def\polehelper#1#2{\mathrel{\rlap{$#1#2$}\mkern3mu{#1#2}}}
\newcommand{\Lt}{\mathcal{L}_{\mathrm{T}}}
\newcommand{\Lr}{\mathcal{L}_{\mathrm{R}}}
\newcommand{\T}{\mathit{T}}
\newcommand{\F}{\mathit{F}}
\begin{document}
%
\title{A Friedman--Sheard-style Theory for Classical
Realisability}
%
%
\author{Daichi Hayashi\inst{1} and
Graham E. Leigh\inst{2}}
%
%
\institute{
\email{daichinhayashi0611@gmail.com} \and
Department of Philosophy, Linguistics and Theory of Science, University of Gothenburg, Göteborg, Sweden \\
\email{graham.leigh@gu.se}\\
}
%
\maketitle              
\begin{abstract}
In Hayashi and Leigh (2024), the authors formulate classical number realisability for first-order arithmetic and a corresponding axiomatic system based on Krivine's classical realisability interpretation.
This paper presents a self-referential generalisation of previous results in the spirit of Friedman and Sheard (1987).
\end{abstract}
\section{Introduction}

Since Tarski's definition of truth for a formal language \cite{tarski1983concept}, various hierarchical or self-referential definitions of truth and their axiomatisations have been proposed (cf.~\cite{cantini1990theory,feferman1991reflecting,field2008saving,friedman1987axiomatic,halbach2014axiomatic,leigh2010ordinal,leitgeb2005truth}).
Truth is usually taken to be a property of truth bearers, such as sentences or propositions, and specific information on how such truth-bearers acquired the truth is not taken into account in those truth definitions.
On the contrary, Kleene \cite{kleene1945interpretation} formulated the notion of \emph{realisability} as a relation holding between an arithmetical formula and a \emph{realiser}, a natural number that represents a computational verification of the formula, and proved that each theorem of first-order Heyting arithmetic can be provably assigned a realiser and thus be justified.

While Kleene's interpretation targets intuitionistic theories,
Krivine successfully formulated realisability interpretation that can accommodate classical logic \cite{krivine2001typed,krivine2003dependent,krivine2009realizability}.
A novel feature of Krivine's realisability is that each formula is associated not only a set of \emph{realisers} but also a set of \emph{counter-realisers}, or \emph{refuters}, with
the couple of a refuter and realiser indicating a contradiction.
The set of all contradictory pairs is called a \emph{pole} and is fixed in advance, leading to different interpretations of realiser and refuter.
If there is an entity that realises a formula regardless of the choice of pole, then the formula is said to be \emph{universally} realisable.
Krivine proved that each theorem of classical second-order arithmetic is universally realisable.
As truth in the standard model coincides with realisability relative to the empty pole, universal realisability implies truth.
That is, universal realisability mediates provability and truth for classical logic.
Therefore, Krivine's interpretation indeed provides soundness of arithmetic, and Tarskian truth can be positioned in Krivine's general semantic framework. 

Just as the Tarskian truth definition can be axiomatised as a first-order theory $\mathsf{CT}$ (\emph{compositional truth}) over a weak arithmetic by a unary truth predicate, 
Krivine's classical realisability can be given an axiomatic counterpart $\mathsf{CR}$ (\emph{compositional realisability})~\cite{hayashi2024compositional}.
However, given that many other self-referential generalisations of $\mathsf{CT}$ have been proposed by authors, we can expect to formulate corresponding self-referential systems for classical realisability.
As a first step in this direction, this paper particularly aims at formulating a theory that corresponds to the Friedman--Sheard system \( \mathsf{FS} \) introduced in \cite{friedman1987axiomatic}.

The structure of this paper is as follows.
In the remainder of this section, we repeat the definition of classical number realisability in \cite{hayashi2024compositional} and its axiomatisation $\mathsf{CR}$, and then we restate some results on $\mathsf{CR}$.
In Section~\ref{ch:realisability_sec:FSR}, we define a system of Friedman--Sheard realisability $\mathsf{FSR}$ and its extensions.
Subsequently, we provide a revisional model and some proof-theoretic results for those systems.
In Section~\ref{sec:self-realisability}, we prove \emph{self realisability} of $\mathsf{FSR}$, that is, every theorem of $\mathsf{FSR}$ is formally realisable within $\mathsf{FSR}$.
In Section~\ref{sec:applications}, we will see two applications of self referentiality of $\mathsf{FSR}$: a proof-theoretic lower bound of extensions of $\mathsf{FSR}$ and an $\omega$-inconsistency-like phenomenon.  
In the final section, potential future work is discussed.


\subsection{Conventions and Notations}
We introduce the same abbreviations as used in \cite{hayashi2024compositional} for common formal concepts concerning coding and recursive functions. 
We denote by \( \mathcal L \) the first-order language of $\mathsf{PA}$.
The logical symbols of $\mathcal{L}$ are $\to$, $\forall$ and \( = \).
The non-logical symbols are a constant symbol $\num 0$ and the function symbols for all primitive recursive functions. 
In particular, $\mathcal{L}$ has the successor function $x + 1,$ with which we can define \emph{numerals} $\num 0,\num 1,\num 2,\dotsc$. 
Thus, we identify natural numbers with the corresponding numeral.
We also employ the false equation $0=1$ as the propositional constant $\bot$ for contradiction, and then
the other logical symbols are defined in a standard manner, e.g., $\neg A = A \to \bot$.

The primitive recursive pairing function is denoted $\langle \cdot, \cdot \rangle$ with projection functions $(\cdot)_0$ and $(\cdot)_1$ satisfying $(\langle x,y \rangle)_0 = x$ and $(\langle x,y \rangle)_1 = y$. 
Sequences are treated as iterated pairing: $\langle x_0, x_1,\dots,x_k \rangle := \langle x_0, \langle x_1, \dots, x_k \rangle \rangle$.
Based on these constructors, each finite extension of \( \mathcal{L} \) is associated a fixed G\"{o}del coding (denoted \( \ulcorner e \urcorner \) where \( e \) is a finite string of symbols of the extended language) for which the basic syntactic constructions are primitive recursive.
In particular, \( \mathcal L \) contains a binary function symbol \( \sub \) representing the mapping \( \gn{A(x)} , n \mapsto \gn{A( n)} \) in the case that \( x \) is the only free variable of \( A(x) \), and binary function symbols \( \deq \), \( \dra \) and \( \dfa \) representing, respectively, the operations \( \gn s, \gn t \mapsto \gn{ s = t } \), \( \gn A, \gn B \mapsto \gn{ A \rightarrow B } \) and \( \gn x, \gn A \mapsto \gn{ \forall x A } \).
These operations will sometimes be omitted and we write \( \gn{s=t} \) and \( \gn{A\to B} \) for \( \deq( \gn s , \gn t ) \) and \( \dra(\gn A ,\gn B) \), etc.

We introduce a number of abbreviations for \( \mathcal L \)-expressions corresponding to common properties or operations on G\"odel codes.
The property of being the code of a variable is expressed by the formula \( \pred{Var}(x) \),
$\pred{ClTerm}(x)$ denotes the formula expressing that $x$ is a code of a \emph{closed} $\mathcal{L}$-term, and 
for a fixed extension \( \mathcal L' \) of \( \mathcal L \), $\pred{Sent}_{\mathcal{L}'}(x)$ expresses that $x$ is the code of a sentence.
The concepts above are primitive recursively definable meaning that the representing \( \mathcal L \)-formula is simply an equation between \( \mathcal L \)-terms.
Given a formula $A(x)$ with at most $x$ is free, $\gn {A(\dot{x})}$ abbreviates the term $\sub(\gn {A(x)} , x)$ expressing the code of the formula \( A(n) \) where \( n \) is the value of \( x \).
For a sequence $\vec{x}$ of variables, $\gn {A(\dot{\vec{x}})} $ is defined similarly.

Quantification over codes is associated similar abbreviations:
\begin{itemize}
	\item \( \forall \ulcorner A \urcorner \in \pred{Sent}_{\mathcal{L}'}. \ B( \ulcorner A \urcorner ) \) abbreviates \( \forall x (\pred{Sent}_{\mathcal{L}'}(x) \to B( x )) \).
	\item  \( \forall \gn s .\ B( \gn{s}  )  \) abbreviates
		\(
			\forall x( \pred{ClTerm}(x) \to B( x ) )
		\).
	\item \( \forall \gn { A_v } \in \pred{Sent}_{\mathcal{L}'}.\ B( v , \gn{ A } ) \) abbr.
		\(
		\forall x\forall v(\pred{Var}(v) \wedge \pred{Sent}_{\mathcal{L}'}( \dfa v x ) \to B( v, x ) ),
		\) namely quantification relative to codes of formulas with at most one distinguished variable free.
\end{itemize}

Partial recursive functions can be expressed in \( \mathcal L \) via the Kleene `T predicate' method.
The ternary relation $ x \cdot y \simeq z $ expresses that the result of evaluating the \( x \)-th partial recursive function on input \( y \) terminates with output \( z \).
Note that this relation has a $\Sigma^{0}_1$ definition in Peano arithmetic, $\mathsf{PA}$, as a formula \( \exists w (\pred{T}_1(x,y,w) \land \pred U (w) = z ) \) where \( \pred{T}_1 \) and $\pred U$ are primitive recursive.
It will be notationally convenient to use \( x \cdot y \) in place of a \emph{term} (with the obvious interpretation) though use of this abbreviation will be constrained to contexts in which potential for confusion is minimal.

With the above ternary relation we can express the property of two closed terms having equal value, via a $\Sigma^0_1$-formula $\pred{Eq}(y,z)$.
That is, \( \pred{Eq}(y,z) \) expresses that $y$ and $z$ are codes of closed $\mathcal{L}$-terms $s$ and $t$ respectively such that $s=t$ is a true equation. 

As is usual, the proof-theoretic strength of the theories considered in this paper is expressed by means of recursive ordinals.
However, this paper only mentions two ordinals $\varepsilon_{\varepsilon_0}$ and $\varphi 20$.
The ordinal function \( \alpha \mapsto \varepsilon_\alpha \) is the enumerating function of the \emph{epsilon} numbers, namely the fixed points of \( \omega \)-exponentiation. 
Thus, \( \varepsilon_{\varepsilon_0} \) is the $\varepsilon_0$-th \emph{epsilon} ordinal.
The latter ordinal is the first ordinal $> 0$ closed under the above function. 
That is, for every $\alpha < \varphi 20$ we have $ \alpha < \varepsilon_{\alpha} < \varphi 20$.
For our purposes it suffices to fix an appropriate ordinal notation system $\mathcal{O}$ containing a representation of \( \varphi 2 0 \) (see, e.g.,~\cite{pohlers2008proof}).
Then, $\mathsf{PA}$ can define a formula $x \in \pred{OT} $ as meaning that $x$ represents an ordinal number in $\pred{OT}$.
We extend the numerals to ordinals \( \omega \le \alpha \le \varphi 2 0 \) by setting \( \underline \alpha = \underline n \) where \( n \in \pred{OT} \) represents \( \alpha \).
Symbols $\alpha, \beta,$ and $\gamma$ range over the ordinal numbers in $\pred{OT}$.
Thus, $\forall \alpha A(\alpha)$ abbreviates $\forall x (x \in \pred{OT} \to A(x)).$
By the standard method, $\mathsf{PA}$ can also define standard relations and operations on $\pred{OT}$. 
In particular, let $<$ be the induced well-ordering on $\pred{OT}$.
The $\mathrm{T}$-free consequence of systems in this paper can be characterised by provable transfinite induction:
For a formula $A(x)$ of a language $\mathcal{L}'$, 
we define the $\mathcal{L}'$-formula $\pred{TI}(A, x) = \pred{Prog}_x  A(x) \to A(x)$, 
where $\pred{Prog}_x A(x) = \forall \alpha(\forall \beta < \alpha (A(\beta)) \to A(\alpha))$.
Then, we define the $\mathcal{L}'$-schema:
$\pred{TI}(x) = \{ \pred{TI}(A, x) \ | \ A\text{ is an $\mathcal{L}'$-formula} \}$.
For an ordinal \( \alpha \le \varphi 2 0 \), we set
$\pred{TI}({<}\alpha) := \bigcup_{\beta < \alpha} \pred{TI}(\underline\beta).
$
Finally, for an $\mathcal{L}'$-theory $\mathsf{T}$, we define $\mathsf{T} + \mathsf{TI}({<} \alpha)$ to be the extension of $\mathsf{T}$ with the $\mathcal{L}'$-schema $\pred{TI}({<} \alpha)$.


Finally, we introduce the standard Tarskian truth and its axiomatisation.
Truth for $\mathcal{L}$ is characterised inductively in the standard manner:
\begin{itemize}
	\item A closed equation $s=t$ is true iff $s$ and $t$ denote the same value in $\mathbb{N}$;
	\item $A \to B$ is true iff if $A$ is true, then $B$ is true;
	\item $\forall x A(x)$ is true iff $A(s)$ is true for all closed terms $s$.
\end{itemize}

Quantification over $\mathcal{L}$-sentences and the operations on syntax implicit in the above clauses can be expressed via a G\"{o}del-numbering. 
Thus, employing a unary (truth) predicate $\T$, a formal system $\mathsf{CT}$ can be defined corresponding, in a straightforward manner, to the Tarskian truth clauses.

\begin{definition}[$\mathsf{CT}$]\label{defn:CT}
For a unary predicate $\T ,$ let $\Lt = \mathcal{L} \cup \{ \T \}.$
The $\Lt$-theory $\mathsf{CT}$ (compositional truth) consists of $\mathsf{PA}$ formulated for the language $\Lt$ plus the following three axioms.
\begin{description}

\item[$(\mathrm{CT}_{=})$] $\forall \gn s, \gn t. \  \T ( s \deq t )  \leftrightarrow \pred{Eq}(s,t) $.

\item[$(\mathrm{CT}_{\to})$] $\forall \gn {A },\gn {B} \in \pred{Sent}_{\mathcal{L}}. \ \T\gn {A \to B} \leftrightarrow ( \T \gn A \to \T\gn B ) $.

\item[$(\mathrm{CT}_{\forall})$] $\forall \gn {A_v} \in \pred{Sent}_{\mathcal{L}}. \ \T\gn {\forall v A} \leftrightarrow \forall x ( \T \gn {A(\dot{x})} )$.

\end{description}
\end{definition}

As a consequence of the above axioms,
%
$\mathsf{CT}$ staisfies Tarski's Convention T \cite[pp.~187--188]{tarski1983concept} for formulas in \( \mathcal L \):

\begin{lemma}\label{fact:tarski_biconditional}
The Tarski-biconditional is derivable in \( \mathsf{CT} \) for every formula of \( \mathcal L \). That is, for each formula \( A(x_1, \dotsc, x_k) \) of \( \mathcal L \) in which only the distinguished variables occur free, we have
\[
\mathsf{CT} \vdash \forall x_1, \dotsc , x_k ( \T \ulcorner A(\dot{ x}_1, \dotsc, \dot{ x}_k ) \urcorner \leftrightarrow A(x_1 , \dotsc, x_k) ).
\]
\end{lemma}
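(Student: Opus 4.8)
The plan is to prove the Tarski-biconditional by induction on the logical complexity of the formula $A$. Since the compositional axioms $(\mathrm{CT}_{=})$, $(\mathrm{CT}_{\to})$ and $(\mathrm{CT}_{\forall})$ give $\mathsf{CT}$ control over $\T$ exactly at the three logical constructors of $\mathcal L$, the natural strategy is to match each case of the external induction to the corresponding compositional axiom. The statement to be proved is a universally quantified biconditional for each fixed $\mathcal L$-formula $A(x_1,\dotsc,x_k)$, so the induction is a meta-level induction (in the informal metatheory) producing, for each $A$, a genuine $\mathsf{CT}$-derivation; the quantifiers $\forall x_1,\dotsc,x_k$ are object-level and are handled uniformly.

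First I would set up the base case. For an atomic formula, i.e.\ an equation $s=t$ where $s,t$ are $\mathcal L$-terms built from the variables $x_1,\dotsc,x_k$, the code $\gn{A(\dot x_1,\dotsc,\dot x_k)}$ is $\gn{s'=t'}$ where $s',t'$ are the closed terms obtained by substituting the numerals for the values of $x_1,\dotsc,x_k$. Axiom $(\mathrm{CT}_{=})$ then reduces $\T\gn{s'=t'}$ to $\pred{Eq}(\gn{s'},\gn{t'})$, and the key arithmetical fact to invoke is that $\mathsf{PA}$ proves $\pred{Eq}(\gn{s'},\gn{t'}) \leftrightarrow s=t$, since $\pred{Eq}$ correctly expresses equality of closed-term values and $\mathsf{PA}$ proves each instance of the evaluation of terms against their numerical value. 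Chaining these gives $\T\gn{s'=t'}\leftrightarrow (s=t)$, which is the base case.

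Next I would handle the two inductive steps, each a direct application of a compositional axiom together with the induction hypotheses. For $A = B \to C$, the code commutes with the implication constructor $\dra$, so $\gn{A(\dot{\vec x})} = \gn{B(\dot{\vec x})} \dra \gn{C(\dot{\vec x})}$; applying $(\mathrm{CT}_{\to})$ gives $\T\gn{A(\dot{\vec x})} \leftrightarrow (\T\gn{B(\dot{\vec x})} \to \T\gn{C(\dot{\vec x})})$, and the induction hypotheses for $B$ and $C$ replace the two truth-claims by $B(\vec x)$ and $C(\vec x)$, yielding $\T\gn{A(\dot{\vec x})} \leftrightarrow (B(\vec x)\to C(\vec x))$, i.e.\ $A(\vec x)$. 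For $A = \forall y\, B$, I would apply $(\mathrm{CT}_{\forall})$ to obtain $\T\gn{\forall y\,B(\dot{\vec x})} \leftrightarrow \forall x\,\T\gn{B(\dot{\vec x}, \dot x)}$, then invoke the induction hypothesis for $B$ (now with one more free variable $y$) inside the universal quantifier, and finally discharge the object-level $\forall x$ against the object-level $\forall y$.

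I expect the main obstacle to be bookkeeping of the substitution function $\sub$ and the interaction between object-level and meta-level quantifiers rather than any deep conceptual difficulty. Concretely, the delicate points are: verifying in $\mathsf{PA}$ that $\sub$ commutes appropriately with the syntactic constructors $\deq$, $\dra$, $\dfa$ (so that, e.g., the code of the numeral-substituted implication is literally the $\dra$ of the numeral-substituted subformulas), and correctly aligning the dotted-substitution notation when passing a quantifier, where a free variable becomes bound and the number of distinguished variables shifts by one. These are routine provable properties of the chosen G\"odel coding, but they must be stated carefully so that each appeal to a compositional axiom is licensed. Since the substitution and constructor identities are primitive recursive and hence $\Delta_0$-provable in $\mathsf{PA}$, the induction goes through, yielding the claimed derivation for every $\mathcal L$-formula $A$.
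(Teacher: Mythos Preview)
Your proposal is correct and follows the standard approach: induction on the logical complexity of $A$, with the base case handled by $(\mathrm{CT}_{=})$ and the $\mathsf{PA}$-provable correctness of $\pred{Eq}$, and the inductive steps by $(\mathrm{CT}_{\to})$ and $(\mathrm{CT}_{\forall})$ together with the primitive recursive commutation of $\sub$ with the syntactic constructors. The paper does not spell out a proof of this lemma, treating it as a well-known consequence of the compositional axioms, so your argument is exactly what is needed to fill the gap.
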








\subsection{Classical Number Realisability}\label{subsec:classical_number_realisability}

Following \cite{hayashi2024compositional}, we give definitions and results on the system of compositional realisability $\mathsf{CR}$.
Before we get to the axiomatisation, let's look at the set-theoretic definition of classical number relisability and how each theorem of $\mathsf{PA}$ is realisable in it.

\begin{definition}\label{defn:pole}
A \textbf{pole} $\pole$ is a subset of $\mathbb{N}$ closed under converse computation:
for all \( e,m,n \in \mathbb N \), if $ e \cdot m \simeq n$ and $n \in \pole,$ then $\langle e, m \rangle \in \pole$. 
\end{definition}

Note that the empty set $\emptyset$ and natural numbers $\mathbb{N}$ trivially satisfy the above condition; thus, they are poles.

Given a pole $\pole$ and $\mathcal{L}$-sentence $A$, we define sets $\lVert A \rVert_{\pole} , \lvert A \rvert_{\pole} \subseteq \mathbb{N}$ of, respectively, \emph{refutations} (or counter-proofs) and \emph{realisations} (or proofs) of $A$.
The sets are defined such that every pair $ \langle n, m \rangle \in \lvert A \rvert_\pole \times \lVert A \rVert_\pole $ of a realisation and refutation is an element of the pole $\pole$.
Thus, $\pole$ can be seen as the set of contradictions.

\begin{definition}\label{defn:classical_number_realisability}
Fix a pole $\pole.$
For each $\mathcal{L}$-sentence $A$, the sets $\lvert A \rvert_{\pole}, \lVert A \rVert_{\pole} \subseteq \mathbb{N}$ are defined as follows. 
The set $\lvert A \rvert_{\pole}$ is defined directly from $\lVert A \rVert_{\pole}$\textup{:}
\[
\lvert A \lvert_{\pole}\ = \{ n \in \mathbb{N} \mid \forall m \in \lVert A \rVert_{\pole}.\ \langle n, m \rangle \in \pole \}.
\]
The set $\lVert A \rVert_{\pole}$ is defined inductively:
\begin{itemize}
\item 
$\lVert s=t \rVert_{\pole} =
\begin{cases}
\mathbb{N}, & \text{if }\mathbb{N} \not\models s=t,\\
\pole, & \text{otherwise.}
\end{cases}$\smallskip

\item $\lVert A \to B \rVert_{\pole} = \{ n \mid (n)_0 \in \lvert A \rvert_{\pole} \text{ and } (n)_1 \in \lVert B \rVert_{\pole} \}$.\medskip

\item $\lVert \forall x A \rVert_{\pole} = \{ n \mid (n)_1 \in \lVert A((n)_0) \rVert_{\pole} \}$.

\end{itemize}
\end{definition}
The motivation for the definitions of $\lVert A \rVert_{\pole}$ and $\lvert A \rvert_{\pole}$ should be clear.
A false equation is refuted by every number whereas a true equation is refuted only by `contradictions', i.e., elements of the pole.
A refutation of $A \to B$ is a pair \( \langle m , n \rangle \) for which $m$ realises $A$ and $n$ refutes $B$.  
A refutation of $\forall x A$ is a pair \( \langle m , n \rangle \) such that $n$ refutes $A( m)$.
Finally, a realiser of $A$ is a number \( n \) that contradicts all refutations of $A$, i.e., $\langle n, m \rangle \in \pole$ for every $m \in \lVert A \rVert_{\pole}$.
In particular, the closure condition on poles implies that every partial recursive function \( \lVert A \rVert_\pole \to \pole \) is a realiser of \( A \): if for every \( n \in \lVert A \rVert_\pole \), \( e \cdot n \) is defined and an element of \( \pole \) then, by definition, \( \langle e , n \rangle \in \pole \) for every \( n \in  \lVert A \rVert_\pole \).
It is also clear that every realiser of \( A \) induces a canonical partial recursive function mapping \(  \lVert A \rVert_\pole \) to \( \pole \).

\begin{fact}[{\cite[cf~L.~3 \& 4]{hayashi2024compositional}}]\label{fact:number-realisability}
There exist numbers $\mathsf{k}_{\pi}$, $\mathsf{k}_{\pole}$, and $\mathsf{i}$ such that each of
the following holds for any numbers $a,b$, for any $\mathcal{L}$-sentences $A,B$, and for any pole $\pole$:
\begin{enumerate}
\item If $a \in \lVert A \rVert_{\pole}$, then $\mathsf{k}_{\pi} \cdot a \in \lvert A \to B \rvert_{\pole}$;\,\footnote{Note that $\mathsf{k}_{\pi}$ corresponds to the CPS translation of \emph{call with current continuation} (cf. \cite{griffin1989formulae,krivine2003dependent}).}

\item If $a \in \pole$, then $\mathsf{k}_{\pole} \cdot a \in \lvert A \rvert_{\pole}$.

\item If $a \in \lvert A \to B \rvert_{\pole}$ and $b \in \lvert A \rvert_{\pole}$, then $\mathsf{i} \cdot \langle a,b \rangle \in \lvert B \rvert_{\pole}$.
 
\end{enumerate}
\end{fact}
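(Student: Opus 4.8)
The plan is to exploit the single principle highlighted just before the statement: since a pole is closed under converse computation (Definition~\ref{defn:pole}), to verify that a number $e$ realises a sentence $C$, i.e.\ $e \in \lvert C \rvert_{\pole}$, it suffices to produce, uniformly in $\pole$, a partial recursive function with index $e$ such that for every $m \in \lVert C \rVert_{\pole}$ the value $e \cdot m$ is defined and lies in $\pole$; the closure condition then yields $\langle e, m \rangle \in \pole$ for all such $m$, which is exactly the defining condition of $\lvert C \rvert_{\pole}$. Each of the three combinators will be obtained as an index, supplied by the s-m-n theorem, of an explicit primitive recursive rearrangement of its arguments, the crucial point being that the index depends only on the rearrangement and not on the sentences $A, B$ or on the pole $\pole$.

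For item~(1), given $a \in \lVert A \rVert_{\pole}$ I would let $\mathsf{k}_{\pi}$ be an index for which $(\mathsf{k}_{\pi} \cdot a) \cdot m \simeq \langle (m)_0, a \rangle$. To check $\mathsf{k}_{\pi} \cdot a \in \lvert A \to B \rvert_{\pole}$, take any $m \in \lVert A \to B \rVert_{\pole}$; by the clause for $\to$ in Definition~\ref{defn:classical_number_realisability} we have $(m)_0 \in \lvert A \rvert_{\pole}$, and since $a \in \lVert A \rVert_{\pole}$ the defining condition of $\lvert A \rvert_{\pole}$ gives $\langle (m)_0, a \rangle \in \pole$. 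As $(\mathsf{k}_{\pi} \cdot a) \cdot m$ evaluates to this element of $\pole$, closure yields $\langle \mathsf{k}_{\pi} \cdot a, m \rangle \in \pole$, as required. This is the call/cc step of the footnote: the refuter $a$ of $A$ is confronted with the realiser $(m)_0$ of $A$ extracted from the refuter $m$ of $A \to B$.

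For item~(2), I would take $\mathsf{k}_{\pole}$ to be an index for the projection, so that $(\mathsf{k}_{\pole} \cdot a) \cdot m \simeq a$ for every $m$; then for any $m \in \lVert A \rVert_{\pole}$ the value lies in $\pole$ by the hypothesis $a \in \pole$, and closure gives $\langle \mathsf{k}_{\pole} \cdot a, m \rangle \in \pole$. For item~(3), I would take $\mathsf{i}$ to be an index for which $(\mathsf{i} \cdot \langle a, b \rangle) \cdot m \simeq \langle a, \langle b, m \rangle \rangle$. Given $m \in \lVert B \rVert_{\pole}$ and using $b \in \lvert A \rvert_{\pole}$, the pair $\langle b, m \rangle$ lies in $\lVert A \to B \rVert_{\pole}$ by the clause for $\to$; since $a \in \lvert A \to B \rvert_{\pole}$ this forces $\langle a, \langle b, m \rangle \rangle \in \pole$, and closure again delivers $\langle \mathsf{i} \cdot \langle a, b \rangle, m \rangle \in \pole$.

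There is no deep difficulty here: once the governing principle is isolated, each case reduces to unwinding the clauses of Definition~\ref{defn:classical_number_realisability} and choosing the obvious argument rearrangement. The only point requiring care is \emph{uniformity} — the three indices must be fixed independently of $A$, $B$ and $\pole$, since the rearranging functions treat their numerical inputs opaquely; this is what makes the combinators genuinely universal, and it is guaranteed by taking them directly from the s-m-n theorem applied to fixed primitive recursive term-shuffling operations. A secondary bookkeeping point is the consistent reading of the curried form $e \cdot a$ as itself an index, so that a further application $(e \cdot a) \cdot m$ is well defined; this is precisely the convention already adopted for the Kleene application operator.
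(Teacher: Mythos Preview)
Your proof is correct and follows precisely the approach the paper signals: the sentence immediately preceding the Fact isolates the governing principle (any index of a partial recursive map \(\lVert A\rVert_\pole \to \pole\) is a realiser of \(A\)), and your three explicit term-shufflers are the standard instantiations of that principle. The paper does not spell out a proof here, citing \cite{hayashi2024compositional} instead, but your argument is exactly the intended one and matches the referenced construction.
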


From the above it is a simple exercise to show realizability of Peano arithmetic:

\begin{corollary}\label{cor:real-PA}
Let $A$ be any $\mathcal{L}$-sentence.
If $\mathsf{PA} \vdash A$, then there exists a number $n$ such that $n \in \lvert A \rvert_{\pole}$ for any pole $\pole$.
\end{corollary}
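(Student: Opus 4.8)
The plan is to argue by induction on the length of a derivation of $A$ in a Hilbert-style formalisation of $\mathsf{PA}$ over classical first-order logic, showing at each step that the derived sentence has a \emph{uniform} realiser, i.e.\ a number $n$ (computed from the codes involved, independently of $\pole$) with $n \in \lvert B \rvert_{\pole}$ for every pole $\pole$. Two things must be checked: that each axiom has such a realiser, and that the inference rules preserve universal realisability. Modus ponens is immediate from Fact~\ref{fact:number-realisability}(3): if $n \in \lvert B \to C \rvert_{\pole}$ and $m \in \lvert B \rvert_{\pole}$ for all $\pole$, then $\mathsf{i} \cdot \langle n , m \rangle \in \lvert C \rvert_{\pole}$ for all $\pole$. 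Throughout, the essential tool beyond Fact~\ref{fact:number-realisability} is the closure property of poles: any partial recursive function sending $\lVert B \rVert_{\pole}$ into $\pole$ induces a realiser of $B$, so it suffices to specify the action of the intended realiser on refutations and check it lands in $\pole$.

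Next I would treat the logical axioms. From $\mathsf{i}$, $\mathsf{k}_{\pole}$, $\mathsf{k}_{\pi}$ and pole closure one builds the combinators realising the propositional axiom schemes: the identity/$\mathsf{K}$/$\mathsf{S}$-style tautologies come from $\mathsf{i}$ and pole closure, ex falso from $\mathsf{k}_{\pole}$, and the classical schemes (double negation or Peirce's law, recalling $\neg B = B \to \bot$) from $\mathsf{k}_{\pi}$, which by Fact~\ref{fact:number-realisability}(1) turns a refutation of $B$ into a realiser of $B \to C$ in the manner of \emph{call/cc}. For the quantifier axioms and the generalisation rule one uses the explicit refutation clause $\lVert \forall x B \rVert_{\pole} = \{ n \mid (n)_1 \in \lVert B((n)_0) \rVert_{\pole} \}$: given a uniform realiser $e$ with $e \cdot k \in \lvert B(k) \rvert_{\pole}$ for every $k$, the function $\langle k , r \rangle \mapsto \langle e \cdot k , r \rangle$ sends each refutation of $\forall x B$ into $\pole$ (since $e\cdot k \in \lvert B(k)\rvert_{\pole}$ and $r \in \lVert B(k) \rVert_{\pole}$ force $\langle e\cdot k , r\rangle \in \pole$), and by closure yields a realiser of $\forall x B$; instantiation $\forall x B \to B(t)$ is handled dually.

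I would then dispatch the arithmetical (non-induction) axioms. Their closed instances are built from true equations, and a true equation $s=t$ has $\lVert s=t \rVert_{\pole} = \pole$, so the identity combinator $\mathsf{e}$ (with $\mathsf{e}\cdot m \simeq m$) realises it because $m \in \pole$ gives $\langle \mathsf{e} , m \rangle \in \pole$ by closure; implications with a false equational premise are absorbed by $\mathsf{k}_{\pi}$, and the universal closures are obtained by the generalisation step above. The genuinely new point, and the step I expect to be the main obstacle, is the induction scheme $B(0) \to \big(\forall x (B(x) \to B(x+1)) \to \forall x B(x)\big)$, which is not covered by the three combinators of Fact~\ref{fact:number-realisability} alone. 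Here one must introduce genuine recursion, via Kleene's recursion theorem or a primitive-recursion combinator, to define uniformly in $k$ a realiser $a_k \in \lvert B(k) \rvert_{\pole}$ by
\[
a_0 = a, \qquad a_{k+1} = \mathsf{i} \cdot \langle f \cdot k , a_k \rangle ,
\]
where $a$ realises $B(0)$ and $f$ realises the step; a meta-level induction on $k$ shows $a_k \in \lvert B(k) \rvert_{\pole}$ for all $\pole$. The desired realiser of $\forall x B$ then acts on a refutation $\langle k , r \rangle$ by computing $a_k$ and returning $\langle a_k , r \rangle \in \pole$, which by pole closure gives membership in $\lvert \forall x B \rvert_{\pole}$. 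Verifying totality of this recursion on numerals and the uniformity of all realisers assembled along the derivation is the crux; the remaining cases are routine bookkeeping with the combinators.
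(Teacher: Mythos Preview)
Your proposal is correct and follows the standard route: induction on the derivation, using the combinators of Fact~\ref{fact:number-realisability} for the logical machinery and a recursion-theorem construction for the induction scheme. The paper itself gives no proof here, merely noting that the result is ``a simple exercise'' from Fact~\ref{fact:number-realisability} (the details being in the cited companion paper); the later formalised versions, Theorem~\ref{formal_realisability_PA} and Lemma~\ref{explicit_realisability_PA}, confirm that the intended argument is exactly the induction on derivation length you outline.
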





To formalise the above classical number realisability interpretation, we follow \cite{hayashi2024compositional} in introducing some additional vocaburaries.
Let \( \Lr \) extend $\mathcal{L}$ by three new predicate symbols:
\begin{itemize}
\item a unary predicate $\pole$ for a pole (written \( x \in \pole \));
\item a binary predicate $ \F $ for refutation (written \( x \F y \));
\item a binary predicate $ \T $ for realisation  (written \( x \T y \)).
\end{itemize}

Then, we can straightforwardly formalise classical number realisability as an $\mathcal{L}_R$-theory.

\begin{definition}[Compositional Realisability]\label{defn:CR}
The $\Lr$-theory $\mathsf{CR}$ extends $\mathsf{PA}$ formulated over $\Lr$ by the universal closures of the following axioms:
\begin{description}
\item[$(\mathrm{Ax}_{\pole})$] 
$\forall x,y,z( x \cdot y \simeq z \to ( z \in \pole \to \langle x, y \rangle \in \pole) )$

\item[$(\mathrm{Ax}_{\T})$] $\forall \gn A \in \pred{Sent}_{\mathcal{L}}. \ \forall a( a \T \gn A \leftrightarrow \forall b  ( b \F \gn A \to \langle a, b \rangle \in \pole ) ) $


\item[$(\mathrm{CR}_{=})$] $\forall \gn s, \gn t. \ \forall  a ( a \F\gn {s = t}  \leftrightarrow ( \pred{Eq}(\gn s, \gn t) \to a \in \pole ) )$

\item[$(\mathrm{CR}_{\to})$] $\forall \gn A ,\gn B \in \pred{Sent}_{\mathcal{L}}. \ \forall a( a \F\gn {A \to B} \leftrightarrow ((a)_0 \T \gn A \land (a)_1\F\gn B ))$

\item[$(\mathrm{CR}_{\forall})$] $\forall \gn {A_v} \in \pred{Sent}_{\mathcal{L}}. \ \forall a( a \F\gn {\dfa v A} \leftrightarrow (a)_1 \F \gn {A ((\dot{a})_0  )} ) $
\end{description}
\end{definition}

\begin{remark}\label{rmk:equiv-CR_=}
The universal closure of the axiom $(\mathsf{CR}_{=})$ is equivalent to the conjunction of the following:
\begin{description}
\item[$(\mathrm{CR}_{=1})$] $\forall \gn s, \gn t. \ \lnot \pred{Eq} ( \gn s , \gn t) \to \forall a ( a \F\gn{ s = t } )$; 

\item[$(\mathrm{CR}_{=2})$] $\forall \gn s, \gn t. \ \pred{Eq}( \gn s , \gn t) \to \forall a ( a \F\gn { s = t } \leftrightarrow a \in \pole) $.
\end{description}
\end{remark}


\begin{proposition}[{\cite[Proposition~1]{hayashi2024compositional}}]\label{term-regular}
	Refutations are provably invariant under term values:
	\[
	\forall \gn {s}, \gn t. \ \forall \gn {A_v} \in \pred{Sent}_{\mathcal{L}}.\  \pred{Eq}(\gn s , \gn t ) \to \forall x( x \F \gn {A(s)} \to x \F \gn {A( t)}).
	\]
\end{proposition}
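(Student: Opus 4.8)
The plan is to prove the statement by a formalised induction on the logical complexity of the formula coded by $\gn{A_v}$, carried out inside the $\mathsf{PA}$-part of $\mathsf{CR}$. Since $A$ is quantified as a G\"odel code, the induction cannot be performed at the meta-level on the structure of $A$; instead I would fix a complexity measure $\mathrm{lh}(\gn A)$ counting the occurrences of $\to$ and $\forall$ in $A$, note that this measure is provably invariant under substitution of closed terms for variables, and prove by $\mathsf{PA}$-induction on $n$ the claim that invariance holds for every $\gn{A_v} \in \pred{Sent}_{\mathcal L}$ with $\mathrm{lh}(\gn A) \le n$. It is convenient to strengthen the target to the biconditional $\pred{Eq}(\gn s, \gn t) \to \forall x(x \F \gn{A(s)} \leftrightarrow x \F \gn{A(t)})$; this is no stronger than the stated implication, because $\pred{Eq}$ is symmetric, so the implication proved for all $\gn s, \gn t$ already yields the converse by exchanging $s$ and $t$.

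The base case is where $A(v)$ is an equation $u_1(v) = u_2(v)$. Here I would first establish, as a $\mathsf{PA}$-provable term lemma, that $\pred{Eq}(\gn s, \gn t)$ implies both $\pred{Eq}(\gn{u_1(s)}, \gn{u_1(t)})$ and $\pred{Eq}(\gn{u_2(s)}, \gn{u_2(t)})$, i.e.\ that substituting equal-valued closed terms into a term yields equal-valued results; this follows from the arithmetisation of term evaluation via the Kleene $\pred T$-predicate. By transitivity of equality of values, $\pred{Eq}(\gn{u_1(s)}, \gn{u_2(s)}) \leftrightarrow \pred{Eq}(\gn{u_1(t)}, \gn{u_2(t)})$, so $A(s)$ and $A(t)$ are provably both true or both false. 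Applying $(\mathrm{CR}_{=})$ on each side then gives $x \F \gn{A(s)} \leftrightarrow x \F \gn{A(t)}$ for every $x$.

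For the inductive step there are two cases. If $A(v) = B(v) \to C(v)$, then by $(\mathrm{CR}_{\to})$ a refutation of $A(s)$ is an $x$ with $(x)_0 \T \gn{B(s)}$ and $(x)_1 \F \gn{C(s)}$, and likewise for $t$. The second conjunct is handled directly by the induction hypothesis for $C$. For the first conjunct I need realisation invariance for $B$, which I obtain from the induction hypothesis for $B$ together with $(\mathrm{Ax}_{\T})$: if refutations of $B(s)$ and $B(t)$ coincide, then the defining condition $\forall b(b \F \gn{B(\cdot)} \to \langle a,b\rangle \in \pole)$ of realisation coincides as well, so $a \T \gn{B(s)} \leftrightarrow a \T \gn{B(t)}$. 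If $A(v) = \dfa w B(v,w)$, then by $(\mathrm{CR}_{\forall})$ we have $x \F \gn{A(s)} \leftrightarrow (x)_1 \F \gn{B(s, \num{(x)_0})}$ and similarly for $t$; applying the induction hypothesis to the formula $C(v) := B(v, \num{(x)_0})$, whose code again lies in $\pred{Sent}_{\mathcal L}$ with one distinguished variable and whose complexity is strictly smaller than that of $A$, yields the required equivalence.

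The step I expect to require the most care is not any single case but the bookkeeping that makes the induction legitimate inside $\mathsf{PA}$: verifying that the complexity measure is substitution-invariant, so that passing from $\dfa w B$ to $B(v, \num{(x)_0})$ strictly decreases it, and that the primitive recursive syntactic analysis genuinely places every $\gn{A_v} \in \pred{Sent}_{\mathcal L}$ into exactly one of the three cases with decompositions given by the inverses of $\deq$, $\dra$ and $\dfa$. The base-case term lemma, while routine, is also where the arithmetisation of substitution and evaluation must be invoked explicitly rather than taken for granted.
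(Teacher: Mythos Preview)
The paper does not actually supply a proof of this proposition; it is stated with a citation to \cite[Proposition~1]{hayashi2024compositional} and no argument is given in the present text. So there is nothing here to compare your proposal against directly.

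That said, your approach is the standard and correct one, and it is almost certainly what the cited proof does: an internal $\mathsf{PA}$-induction on a logical-complexity measure, with the base case handled by $(\mathrm{CR}_{=})$ together with the arithmetised fact that substitution of equal-valued closed terms preserves the truth value of equations, and the inductive cases handled by unfolding $(\mathrm{CR}_{\to})$ and $(\mathrm{CR}_{\forall})$. Your observation that realisation invariance for subformulas follows from refutation invariance via $(\mathrm{Ax}_{\T})$ is exactly the point needed for the $\to$ case, and your strengthening to the biconditional (harmless by symmetry of $\pred{Eq}$) is the clean way to get it. The bookkeeping concerns you flag---substitution-invariance of the complexity measure and the primitive recursive case analysis---are the genuine but routine places where care is needed; nothing in your outline is missing or wrong.
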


We can further extend $\mathsf{CR}$ in the following two ways.

\begin{definition}\label{defn:reflection-rule}
For an $\Lr$-theory $\mathsf S$, let $\mathsf{S}^+$ the extension of $\mathsf{S}$ with the \emph{reflection rule}
\begin{displaymath}
\infer[]{A}{s \T \ulcorner A \urcorner}
\end{displaymath}
for \( A \) an $\mathcal{L}$-sentence and \( s \) a closed term.
\end{definition}

\begin{definition}\label{defn:empty-pole}
Let $\pole = \emptyset$ denote the sentence $\neg \exists x (x \in \pole)$.
Then, for an $\Lr$-theory $\mathsf S$, we define $\mathsf{S}^{\emptyset}$ to be the extension of $\mathsf S$ with the axiom $\pole = \emptyset$.
\end{definition}

\begin{lemma}[{\cite[Lemma~6]{hayashi2024compositional}}]\label{lem:reflection_empty_CR}
Over $\mathsf{CR}$, the following are equivalent.
\begin{enumerate}
\item The reflection schema: \( \exists x (x\T \ulcorner A \urcorner) \to A  \)  for every $\mathcal{L}$-sentence $A$. \label{reflection_Lschema}
\item The axiom: \( \pole = \emptyset \). \label{pole_is_empty}
\end{enumerate}
\end{lemma}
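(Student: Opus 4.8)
The plan is to prove both directions of the equivalence over $\mathsf{CR}$, establishing that the reflection schema and the axiom $\pole = \emptyset$ are interderivable. The direction $(\ref{pole_is_empty}) \Rightarrow (\ref{reflection_Lschema})$ appears the more routine of the two, so I would begin there. Assuming $\pole = \emptyset$, i.e.\ $\neg\exists x(x\in\pole)$, I would unfold the axiom $(\mathrm{Ax}_{\T})$: if some $a$ satisfies $a\T\gn A$, then $\forall b(b\F\gn A \to \langle a,b\rangle\in\pole)$. Since the pole is empty, $\langle a,b\rangle\in\pole$ is outright false, so this forces $\forall b\,\neg(b\F\gn A)$, that is, $A$ has no refutations. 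The crux is then to show, provably in $\mathsf{CR}$, that if $A$ has no refutation then $A$ holds. I expect this to follow from an internal induction on the build-up of $A$ using the compositional axioms $(\mathrm{CR}_{=})$, $(\mathrm{CR}_{\to})$ and $(\mathrm{CR}_{\forall})$, mirroring the Tarskian truth clauses: with $\pole=\emptyset$, the condition ``$A$ has no refutation'' should be provably equivalent to the naive truth of $A$, and hence to $A$ itself via a Tarski-biconditional-style argument analogous to Lemma~\ref{fact:tarski_biconditional}.

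For the converse $(\ref{reflection_Lschema}) \Rightarrow (\ref{pole_is_empty})$, I would argue by deriving a contradiction from the assumption that the pole is inhabited, using an instance of the reflection schema for a suitably chosen false sentence. The natural candidate is $\bot$, i.e.\ the false equation $\num 0 = \num 1$. The key observation is Fact~\ref{fact:number-realisability}(2): from $a\in\pole$ one obtains $\mathsf{k}_{\pole}\cdot a \in \lvert A\rvert_\pole$ for any sentence $A$, and this fact is formalisable in $\mathsf{CR}$. Thus, working in $\mathsf{CR}$, if $\exists x(x\in\pole)$ then there is a realiser of $\bot$, i.e.\ $\exists x(x\T\gn{\bot})$. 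Applying the reflection schema instance for $A = \bot$ yields $\bot$ itself, a contradiction. Hence $\neg\exists x(x\in\pole)$, which is the axiom $\pole=\emptyset$.

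The main obstacle I anticipate is the internal reasoning in the first direction, specifically verifying that ``$A$ has no refutation'' is provably equivalent to $A$ when $\pole=\emptyset$. This requires a careful formalised induction over the syntactic complexity of $\mathcal L$-sentences, handling the three compositional clauses uniformly, and at the quantifier case one must manage the interplay between the substitution operation $\sub$ and the realisability predicates. Because the reflection schema is stated schematically (one instance per $\mathcal L$-sentence $A$), I would treat each $A$ at the meta-level and proceed by external induction on $A$, invoking the relevant compositional axiom at each step; the quantifier clause, where $(\mathrm{CR}_{\forall})$ relates refutations of $\gn{\forall v A}$ to refutations of instances $\gn{A(\dot a_0)}$, is where the argument is most delicate and where Proposition~\ref{term-regular} on term-invariance of refutations may be needed to align codes of instances with their term values.
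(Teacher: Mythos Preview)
The paper does not supply its own proof of this lemma; it is imported verbatim from the companion paper \cite{hayashi2024compositional}. So there is nothing in the present paper to compare against, and the question is simply whether your plan is sound. It is.

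For $(\ref{reflection_Lschema}) \Rightarrow (\ref{pole_is_empty})$ your argument is exactly right and needs no elaboration: if $a \in \pole$ then the constant function $\lambda b.\,a$ realises $\bot$ by $(\mathrm{Ax}_{\pole})$ and $(\mathrm{CR}_{=})$ (every number refutes a false equation), so $\exists x(x\T\gn\bot)$, whence reflection gives $\bot$. This is precisely the formal content of Fact~\ref{fact:number-realisability}(2).

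For $(\ref{pole_is_empty}) \Rightarrow (\ref{reflection_Lschema})$ your plan is also correct, but note that you are essentially re-deriving Lemma~\ref{lem:CR_with_empty}: under $\pole = \emptyset$, the axiom $(\mathrm{Ax}_{\T})$ collapses $a\T\gn A$ to the $a$-independent statement $\forall b\,\neg(b\F\gn A)$, and the external induction you describe is exactly what shows that $0\T x$ satisfies the $\mathsf{CT}$ clauses. Combined with the Tarski-biconditional argument of Lemma~\ref{fact:tarski_biconditional}, this yields $0\T\gn A \leftrightarrow A$ for each $\mathcal L$-sentence $A$, and hence the reflection instance. So rather than redoing the induction from scratch, you may simply cite Lemma~\ref{lem:CR_with_empty} and Lemma~\ref{fact:tarski_biconditional}. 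Your remark that the external induction must be over \emph{formulas} with free variables universally quantified (not merely sentences), and that Proposition~\ref{term-regular} is needed at the quantifier step to pass from $\gn{A((\dot a)_0)}$ to $\gn{A(\dot n)}$, is accurate and is indeed where the care lies.

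One small terminological wobble: you first write ``internal induction'' and later ``external induction''. The latter is what you want and what you describe; an internal (formalised) induction would be both unnecessary for a schema and would require a uniform truth predicate you do not have.
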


\begin{corollary}
$\mathsf{CR}^+$ is a subtheory of $\mathsf{CR}^{\emptyset}$.
\end{corollary}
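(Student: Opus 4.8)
The plan is to show that $\mathsf{CR}^+ \subseteq \mathsf{CR}^{\emptyset}$ by proving that every axiom and every rule of $\mathsf{CR}^+$ is derivable (or admissible) in $\mathsf{CR}^{\emptyset}$. Since both theories extend $\mathsf{CR}$, the base axioms are shared, so the only thing to check is that the reflection rule of Definition~\ref{defn:reflection-rule} is admissible over $\mathsf{CR}^{\emptyset}$.

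First I would observe that, by Lemma~\ref{lem:reflection_empty_CR}, the axiom $\pole = \emptyset$ present in $\mathsf{CR}^{\emptyset}$ is equivalent, over $\mathsf{CR}$, to the reflection \emph{schema} $\exists x (x \T \gn A) \to A$ for every $\mathcal{L}$-sentence $A$. Hence $\mathsf{CR}^{\emptyset}$ proves every instance of this schema. Now suppose we have a derivation in $\mathsf{CR}^+$ ending in an application of the reflection rule: from $s \T \gn A$ (with $s$ a closed term and $A$ an $\mathcal{L}$-sentence) infer $A$. I would argue by induction on the length of the $\mathsf{CR}^+$-derivation that every sentence derivable in $\mathsf{CR}^+$ is already a theorem of $\mathsf{CR}^{\emptyset}$. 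For the inductive step corresponding to the reflection rule, the induction hypothesis gives $\mathsf{CR}^{\emptyset} \vdash s \T \gn A$; from this, since $s$ is a closed term, we obtain $\mathsf{CR}^{\emptyset} \vdash \exists x (x \T \gn A)$ by existential generalisation, and then the reflection schema instance yields $\mathsf{CR}^{\emptyset} \vdash A$, as required. All other rules and axioms are handled trivially since they are common to both systems.

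The main subtlety to be careful about is the status of the reflection rule as a \emph{rule} rather than an axiom: one cannot simply cite $\mathsf{CR}^+ \subseteq \mathsf{CR}^{\emptyset}$ by comparing axiom sets, but must verify that closure under the rule is preserved. This is exactly what the induction on derivation length secures, and the key step is recognising that a premise $s \T \gn A$ with $s$ closed immediately entails $\exists x (x \T \gn A)$, bringing us into the scope of the reflection schema supplied by Lemma~\ref{lem:reflection_empty_CR}. I expect no genuine obstacle here; the argument is a routine admissibility-of-rule verification, and the only care needed is the passage from the closed-term premise of the rule to the existentially quantified antecedent of the schema.
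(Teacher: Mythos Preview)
Your proposal is correct and follows exactly the approach the paper intends: the corollary is stated without proof immediately after Lemma~\ref{lem:reflection_empty_CR}, and your argument spells out the obvious derivation---use the lemma to obtain the reflection schema in $\mathsf{CR}^{\emptyset}$, then verify by induction on derivations that the reflection rule is admissible via existential generalisation on the closed term $s$.
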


As is remarked in the next subsection, $\mathsf{CR}^+$ and $\mathsf{CR}^{\emptyset}$ are proof-theoretically equivalent, whereas they behave differently from the model-theoretic viewpoint.

\subsection{Basic Results on Compositional Realisability}

We provide a model of $\mathsf{CR}$ based on classical number realisability.
First, the interpretation of the vocabularies of $\mathcal{L}$ is naturally given by the standard model $\mathbb{N}$ of arithmetic.
Second, we fix any pole $\pole \subseteq \mathbb{N}$ for the interpretation of the predicate $x \in \pole$. The sets $\mathbb{T}_{\pole},\mathbb{F}_{\pole} \subseteq \mathbb{N} \times \mathbb{N}$ (for the interpretations of $x \T y$ and $x \F y$, respectively) are defined by the sets $\lvert A \rvert_{\pole}$ and $\lVert A \rVert_{\pole}$ in Definition~\ref{defn:classical_number_realisability}:
\begin{align}
&\mathbb{T}_{\pole} := \{ ( n,m ) \in \mathbb{N}^2 \mid m \ \text{is a code of an } \mathcal{L}\text{-sentence }A \text{ and }n \in \lvert A \rvert_{\pole} \},   \notag \\
&\mathbb{F}_{\pole} := \{ ( n,m ) \in \mathbb{N}^2 \mid m \ \text{is a code of an } \mathcal{L}\text{-sentence }A \text{ and } n \in \lVert A \rVert_{\pole} \}. \notag
\end{align}
Then, while $\mathsf{CR}^{\emptyset}$ is true only when the pole is empty, $\mathsf{CR}^+$ is shown to be sound with respect to any pole.

\begin{proposition}[{\cite[Proposition~5]{hayashi2024compositional}}]\label{prop:soundness_CR+}
Let $A$ be any $\Lr$-sentence and assume that $\mathsf{CR}^+ \vdash A$.
Then, for any pole $\pole$, we have $\langle \mathbb{N}, \pole, \mathbb{T}_{\pole}, \mathbb{F}_{\pole} \rangle \models A$.
\end{proposition}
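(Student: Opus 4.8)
The plan is to prove soundness of $\mathsf{CR}^+$ with respect to an arbitrary pole $\pole$ by induction on the length of the derivation $\mathsf{CR}^+ \vdash A$. The base case splits into two parts: first, the $\mathsf{PA}$-over-$\Lr$ axioms and the logical axioms of classical predicate calculus are validated in $\langle \mathbb{N}, \pole, \mathbb{T}_\pole, \mathbb{F}_\pole \rangle$ simply because the first-order part is interpreted in the standard model $\mathbb{N}$ (for the induction schema one uses that $\mathbb{N}$ satisfies full induction for all $\Lr$-formulas, the predicates $\mathbb{T}_\pole, \mathbb{F}_\pole$ being genuine subsets of $\mathbb{N}^2$); second, each of the five realisability axioms of $\mathsf{CR}$ is checked to hold in the model. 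For the inductive step governed by ordinary logical inference rules, soundness is immediate since $\langle \mathbb{N}, \pole, \mathbb{T}_\pole, \mathbb{F}_\pole \rangle$ is an ordinary first-order structure. The genuinely new ingredient is the reflection rule of Definition~\ref{defn:reflection-rule}, which must be shown to preserve truth in the model.

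First I would verify the axiom $(\mathrm{Ax}_\pole)$: this says the interpretation of $\pole$ is closed under converse computation, which is exactly Definition~\ref{defn:pole}, so it holds by our choice of $\pole$. Next, $(\mathrm{Ax}_\T)$ holds by the very definition of $\lvert A \rvert_\pole$ from $\lVert A \rVert_\pole$ in Definition~\ref{defn:classical_number_realisability}, namely $\lvert A \rvert_\pole = \{ n \mid \forall m \in \lVert A \rVert_\pole.\ \langle n,m\rangle \in \pole \}$, read off directly via the definitions of $\mathbb{T}_\pole$ and $\mathbb{F}_\pole$. The three compositional axioms $(\mathrm{CR}_{=})$, $(\mathrm{CR}_{\to})$ and $(\mathrm{CR}_{\forall})$ correspond one-to-one to the three inductive clauses defining $\lVert A \rVert_\pole$; here I would note that the representing $\mathcal{L}$-formulas $\pred{Eq}$, $\pred{Sent}_{\mathcal{L}}$, the pairing and projection operations, and the substitution function $\sub$ all receive their intended meaning in $\mathbb{N}$, so that the syntactic clauses match the set-theoretic ones. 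In each case the biconditional in the axiom unwinds, under the standard interpretation, to the defining equation of the corresponding clause. This part is routine but should be spelled out clause by clause, using Remark~\ref{rmk:equiv-CR_=} to handle the true/false-equation case split in $(\mathrm{CR}_{=})$.

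The key step, and the main obstacle, is the reflection rule: from $\langle \mathbb{N}, \pole, \mathbb{T}_\pole, \mathbb{F}_\pole \rangle \models s \,\T\, \gn A$ for a closed term $s$ and an $\mathcal{L}$-sentence $A$, I must conclude $\langle \mathbb{N}, \pole, \mathbb{T}_\pole, \mathbb{F}_\pole \rangle \models A$. Since $A \in \mathcal{L}$ and the $\mathcal{L}$-reduct of the model is just $\mathbb{N}$, this amounts to showing that if the value $n$ of $s$ lies in $\lvert A \rvert_\pole$, then $\mathbb{N} \models A$. The natural route is an \emph{adequacy} lemma proved by induction on the structure of the $\mathcal{L}$-sentence $A$: for every $A$, if $\lvert A \rvert_\pole \neq \emptyset$ then $\mathbb{N} \models A$, and dually if $\lVert A \rVert_\pole \neq \pole$ (equivalently $\lVert A \rVert_\pole = \mathbb{N}$ at the atomic level) then $\mathbb{N} \not\models A$. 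The atomic case is immediate from the definition of $\lVert s = t \rVert_\pole$. For the implication and universal cases one threads the realiser/refuter duality through the closure property of poles, using Fact~\ref{fact:number-realisability} to produce witnessing refuters or realisers as needed; the delicate point is that this argument need not assume $\pole = \emptyset$, so it must exploit only the structural interplay between $\lvert \cdot \rvert_\pole$ and $\lVert \cdot \rVert_\pole$ rather than collapsing refutation to falsity. Establishing this adequacy lemma uniformly in an arbitrary $\pole$ is where the real work lies, and it is precisely this step that distinguishes the soundness of $\mathsf{CR}^+$ (valid for every pole) from that of $\mathsf{CR}^\emptyset$.
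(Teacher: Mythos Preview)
Your adequacy lemma is false for non-empty poles, and this is fatal to the argument as stated. Take $\pole = \mathbb N$. Then for every $\mathcal L$-sentence $A$ and every $n$, the condition $\forall m \in \lVert A \rVert_\pole.\ \langle n,m\rangle \in \pole$ is vacuously satisfied, so $\lvert A \rvert_{\mathbb N} = \mathbb N$; in particular $\lvert 0{=}1 \rvert_{\mathbb N} \neq \emptyset$ while $\mathbb N \not\models 0{=}1$. Hence ``$\lvert A \rvert_\pole \neq \emptyset \Rightarrow \mathbb N \models A$'' cannot hold uniformly in $\pole$, and no induction on $A$ will save it. The ``delicate point'' you flag is not just delicate: the implication you need is equivalent (over $\mathsf{CR}$) to the reflection schema, which by Lemma~\ref{lem:reflection_empty_CR} forces $\pole = \emptyset$.

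The correct fix, and the one the paper uses (see the treatment of the reflection rule in the proof of Proposition~\ref{soundness_FSR+}, which is the $\mathsf{FSR}^+$ analogue of the present statement), is to load the universal quantifier over poles \emph{into} the induction hypothesis rather than fixing $\pole$ in advance. You prove, by induction on the derivation, that every theorem of $\mathsf{CR}^+$ holds in $\langle \mathbb N, \pole, \mathbb T_\pole, \mathbb F_\pole\rangle$ for \emph{all} poles $\pole$ simultaneously. At the reflection-rule step the hypothesis then gives $\langle \mathbb N, \pole', \mathbb T_{\pole'}, \mathbb F_{\pole'}\rangle \models s\,\T\,\gn A$ for every $\pole'$; specialising to $\pole' = \emptyset$ and invoking Lemma~\ref{lem:reflection_empty_CR} (equivalently Lemma~\ref{lem:CR_with_empty}) yields $\mathbb N \models A$. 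Since $A$ is an $\mathcal L$-sentence, it follows that $\langle \mathbb N, \pole, \mathbb T_\pole, \mathbb F_\pole\rangle \models A$ for the original arbitrary $\pole$. Your base cases and the treatment of the ordinary inference rules are fine; only the reflection step needs this empty-pole detour.
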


As for the proof-theoretic properties of $\mathsf{CR}$,
we first remark that realisability of $\mathsf{PA}$ can be formalised.
\begin{theorem}[{\cite[Theorem~1]{hayashi2024compositional}}]\label{formal_realisability_PA}
We assume that $\mathsf{PA}$ is formulated in the language $\mathcal{L}.$
For each $\mathcal{L}$-sentence $A$, if $\mathsf{PA} \vdash A$, then there exists a closed term $s$ such that $\mathsf{CR} \vdash s \T \gn A $.
\end{theorem}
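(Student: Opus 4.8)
The plan is to internalise the set-theoretic argument behind Corollary~\ref{cor:real-PA}. The $\mathsf{CR}$-axioms $(\mathrm{Ax}_{\pole})$, $(\mathrm{Ax}_{\T})$, $(\mathrm{CR}_{=})$, $(\mathrm{CR}_{\to})$ and $(\mathrm{CR}_{\forall})$ encode, respectively, the pole-closure condition of Definition~\ref{defn:pole} and the defining clauses for $\lvert\cdot\rvert_{\pole}$ and $\lVert\cdot\rVert_{\pole}$ of Definition~\ref{defn:classical_number_realisability}, so every step used to realise $\mathsf{PA}$ set-theoretically has a $\mathsf{CR}$-provable counterpart. Accordingly, I would argue by external induction on a fixed Hilbert-style derivation of $A$ in $\mathsf{PA}$ over $\mathcal{L}$, extracting at each line a closed realiser term together with a $\mathsf{CR}$-proof that it realises the corresponding code.

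Since intermediate lines of a derivation carry free variables, the right induction hypothesis attaches to each derivation of a formula $A$ with free variables among $\vec y$ a closed $\mathcal{L}$-term $s$ — built from the primitive recursive apparatus of $\mathcal{L}$ together with the constants $\mathsf{k}_{\pi}, \mathsf{k}_{\pole}, \mathsf{i}$ of Fact~\ref{fact:number-realisability} — for which $\mathsf{CR} \vdash \forall \vec y\,(\, s \cdot \langle \vec y \rangle \T \gn{A(\dot{\vec y})} \,)$, the empty-$\vec y$ case recovering the statement. Letting the realiser depend recursively on the values of $\vec y$ is essential: for the quantifier axioms and the generalisation rule the realiser must push the relevant individual onto the stack, while for universal instantiation $\forall x B \to B(t)$ it must compute the value of $t(\vec y)$, where Proposition~\ref{term-regular} is used to pass between $B(t(\vec y))$ and $B$ of the numeral denoting that value.

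The base cases split into logical, equality, and arithmetical axioms. For the propositional schemata I would exhibit the usual $\mathsf{S}$ and $\mathsf{K}$ combinators; Peirce's law is realised using $\mathsf{k}_{\pi}$ (Fact~\ref{fact:number-realisability}, item~1) and \emph{ex falso} using $\mathsf{k}_{\pole}$ (item~2), each verification unfolding $(\mathrm{Ax}_{\T})$ and $(\mathrm{CR}_{\to})$ and closing under $(\mathrm{Ax}_{\pole})$. The equality and congruence axioms, together with the defining equations of the primitive recursive function symbols, reduce under $(\mathrm{CR}_{=})$ to true equations and are realised by variants of the identity combinator, again invoking Proposition~\ref{term-regular} for the congruence clauses. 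The two inference rules are uniform: modus ponens is the $\mathsf{CR}$-provable form of item~3, so the realiser of $B$ is $\mathsf{i} \cdot \langle s, t \rangle$; generalisation repackages a uniform realiser of $A(\dot x)$ into a realiser of $\forall x A$ via the recursive map $\langle n, \pi \rangle \mapsto \langle s \cdot \langle \vec y, n \rangle, \pi \rangle$, whose correctness follows from $(\mathrm{CR}_{\forall})$ and one application of $(\mathrm{Ax}_{\pole})$.

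The main obstacle is the induction schema. Given realisers $a$ of $A(0)$ and $b$ of $\forall x(A(x) \to A(x+1))$, I would define by the recursion theorem a closed term $r$ with $r \cdot 0 \simeq a$ and $r \cdot (n+1) \simeq \mathsf{i} \cdot \langle b_n, r \cdot n \rangle$, where $b_n$ is the realiser of $A(n) \to A(n+1)$ extracted from $b$ exactly as in the proof of Corollary~\ref{cor:real-PA}. The crucial point is then to prove $\forall n\,(\, r \cdot n \T \gn{A(\dot n)} \,)$ \emph{inside} $\mathsf{CR}$ by induction on $n$; this is legitimate precisely because $\mathsf{CR}$ contains $\mathsf{PA}$ formulated over the whole of $\Lr$, so the induction axiom is available for the formula $r \cdot n \T \gn{A(\dot n)}$, which contains the predicate $\T$. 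A final application of the generalisation construction turns $r$ into a realiser of $\forall x A$, completing the inductive step and hence the theorem.
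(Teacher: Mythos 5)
Your proposal is correct and takes essentially the approach the paper intends: the theorem is imported from \cite[Theorem~1]{hayashi2024compositional}, whose proof (as the proof sketch of Lemma~\ref{explicit_realisability_PA} confirms) is exactly an induction on the length of the derivation, extracting combinator-style realisers for the logical axioms and rules via the provable analogues of Fact~\ref{fact:number-realisability}, and handling the induction schema by the recursion theorem together with $\Lr$-induction inside $\mathsf{CR}$ on the formula $r \cdot n \T \gn{A(\dot n)}$. The only points you should make explicit in a full write-up are the standard technicalities you already gesture at: provable totality of the recursively defined realiser $r$, and the conversion of the Kleene-application ``terms'' into genuine closed $\mathcal L$-terms (or numerals) for the final statement.
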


Although $\mathsf{CR}$ is expressively strong enough to realise $\mathsf{PA}$,
it is shown to be conservative over $\mathsf{PA}$.
\begin{proposition}[{\cite[Proposition~3]{hayashi2024compositional}}]\label{prop:conservativity_CR}
$\mathsf{CR}$ is conservative over $\mathsf{PA}.$
\end{proposition}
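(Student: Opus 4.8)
The plan is to exhibit a relative interpretation of $\mathsf{CR}$ in $\mathsf{PA}$ that is the identity on the arithmetical vocabulary, which immediately yields conservativity. The underlying idea is that the \emph{full} pole trivialises realisability: when every number lies in the pole, every number both realises and refutes every sentence, so all three new predicates can be interpreted as identically true. Concretely, I would define a translation $(\cdot)^\ast$ from $\Lr$-formulas to $\mathcal L$-formulas by setting $(t \in \pole)^\ast$, $(s \T t)^\ast$ and $(s \F t)^\ast$ all equal to the trivially true formula $0=0$, letting $(\cdot)^\ast$ be the identity on atomic $\mathcal L$-formulas, and letting it commute with $\to$, $\forall$ and $=$. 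By construction $(\cdot)^\ast$ is the identity on $\mathcal L$-formulas and sends every $\Lr$-formula to an $\mathcal L$-formula.

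The next step is to check that $\mathsf{PA} \vdash \varphi^\ast$ for every axiom $\varphi$ of $\mathsf{CR}$. The base axioms of $\mathsf{PA}$ over $\Lr$ split into two kinds. The finitely many basic (non-induction) axioms are $\mathcal L$-sentences, hence fixed by $(\cdot)^\ast$ and still $\mathsf{PA}$-axioms. Each induction axiom for an $\Lr$-formula $\psi$ is mapped, since $(\cdot)^\ast$ commutes with the connectives and the quantifier, to the induction axiom for the $\mathcal L$-formula $\psi^\ast$, which is again a $\mathsf{PA}$-axiom. For the five realisability axioms, every atomic occurrence of $\in\pole$, $\T$ or $\F$ is replaced by $0=0$, so each translated axiom is a first-order combination of the arithmetical guards together with the constant $0=0$, and in every case reduces provably to a logical validity. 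For instance $(\mathrm{Ax}_\T)^\ast$ asserts $\forall \gn A \in \pred{Sent}_{\mathcal L}.\ \forall a(0=0 \leftrightarrow \forall b(0=0 \to 0=0))$, and $(\mathrm{CR}_\to)^\ast$ asserts $\forall \gn A,\gn B \in \pred{Sent}_{\mathcal L}.\ \forall a(0=0 \leftrightarrow (0=0 \wedge 0=0))$; both are theorems of pure logic, and $(\mathrm{Ax}_\pole)$, $(\mathrm{CR}_=)$ and $(\mathrm{CR}_\forall)$ are verified the same way (the surviving arithmetical guards such as $\pred{Eq}(\gn s,\gn t)$ are harmless, since $X \to 0=0$ is provable for any $X$).

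Finally, since any translation fixing the logical constants and sending axioms to theorems preserves derivability, $\mathsf{CR} \vdash A$ implies $\mathsf{PA} \vdash A^\ast$ for every $\Lr$-sentence $A$; and when $A$ is an $\mathcal L$-sentence we have $A^\ast = A$, giving $\mathsf{PA}\vdash A$. I expect the only delicate point to be the treatment of the induction schema: conservativity could in principle fail precisely because $\mathsf{CR}$ admits induction for formulas mentioning $\T$, $\F$ and $\pole$, and it is exactly the fact that these predicates receive an $\mathcal L$-definable (here, trivial) interpretation that makes every such instance translate to a genuine instance of arithmetical induction. Equivalently, the same construction can be phrased semantically: any model $\mathcal M \models \mathsf{PA}$ expands to a model of $\mathsf{CR}$ with the same $\mathcal L$-reduct by taking $\pole^{\mathcal M} = M$ and $\T^{\mathcal M} = \F^{\mathcal M} = M \times M$, with full $\Lr$-induction transferring because the new predicates are $\mathcal L$-definable over $\mathcal M$.
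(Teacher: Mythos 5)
Your proposal is correct and is essentially the paper's own argument: the paper (via the cited Proposition~3 of \cite{hayashi2024compositional}, reused verbatim for $\mathsf{FSR}$ in Proposition~\ref{conservativity_FSR}) defines exactly the same translation $\mathcal{T}_{\mathbb{N}}$ sending $s \in \pole$, $s \T t$ and $s \F t$ to $0=0$ and fixing $\mathcal{L}$, corresponding to the full pole $\pole = \mathbb{N}$ under which every number realises and refutes every sentence. Your remarks on the induction schema and the model-theoretic reformulation match the intended reading.
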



In contrast, $\mathsf{CR}^+$ and $\mathsf{CR}^{\emptyset}$ have the same proof-theoretic strength as $\mathsf{CT}$. 
In particular, $\mathsf{CT}$ is \emph{relatively truth definable} in $\mathsf{CR}^{\emptyset}$ in the sense of \cite{fujimoto2010relative}, that is, relatively interpretable in a way that preserves vocabularies other than the truth predicate.

\begin{lemma}[{\cite[Lemma~5]{hayashi2024compositional}}]\label{lem:CR_with_empty}
$\mathsf{CR}^{\emptyset}$ can define the truth prediate of $\mathsf{CT}$
as, e.g., the predicate $0 \T x$. In other words, $\mathsf{CR}^{\emptyset}$ derives the following:
\begin{description}

\item[$(\mathrm{CT}_{=})'$] $\forall \gn s, \gn t. \ 0 \T\gn {s = t}  \leftrightarrow \pred{Eq}(\gn s, \gn t)$

\item[$(\mathrm{CT}_{\to})'$] $\forall \gn A , \gn B \in \pred{Sent}_{\mathcal{L}}. \  0 \T\gn {A \to B} \leftrightarrow ( 0 \T \gn A \to 0 \T\gn B )$

\item[$(\mathrm{CT}_{\forall})'$] $\forall \gn {A_x} \in \pred{Sent}_{\mathcal{L}}. \ 0\T\ulcorner \forall x A \urcorner \leftrightarrow \forall x ( 0\T \ulcorner A(\dot{x}) \urcorner )$.
\end{description}
Therefore, every $\mathcal{L}$-theorem of $\mathsf{CT}$ is derivable in $\mathsf{CR}^{\emptyset}$.
\end{lemma}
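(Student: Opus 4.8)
The plan is to exploit the extra axiom $\pole = \emptyset$ to collapse the binary realisation predicate into a predicate independent of its first argument, and then to read off the three compositional clauses by unwinding the refutation axioms of $\mathsf{CR}$.

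First I would prove a reduction lemma: in $\mathsf{CR}^{\emptyset}$, for every $\gn A \in \pred{Sent}_{\mathcal{L}}$ and every $a$,
\[
 a \T \gn A \leftrightarrow \neg \exists b\, (b \F \gn A).
\]
This is immediate from $(\mathrm{Ax}_{\T})$ together with $\pole=\emptyset$: as $\langle a,b\rangle\in\pole$ is false for all $a,b$, the matrix $\forall b(b\F\gn A\to\langle a,b\rangle\in\pole)$ reduces to $\forall b\,\neg(b\F\gn A)$. Since the right-hand side does not mention $a$, it follows that $a\T\gn A\leftrightarrow 0\T\gn A$ for every $a$; this collapse is exactly what lets $0\T x$ play the role of a Tarskian truth predicate.

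Next I would derive the three clauses, in each case computing when a refutation of the relevant sentence exists. For $(\mathrm{CT}_{=})'$, Remark~\ref{rmk:equiv-CR_=} with $\pole=\emptyset$ gives $b\F\gn{s=t}\leftrightarrow\neg\pred{Eq}(\gn s,\gn t)$, so a refutation exists iff $\neg\pred{Eq}(\gn s,\gn t)$, and the reduction lemma yields $0\T\gn{s=t}\leftrightarrow\pred{Eq}(\gn s,\gn t)$. For $(\mathrm{CT}_{\to})'$, axiom $(\mathrm{CR}_{\to})$ gives $\exists a(a\F\gn{A\to B})\leftrightarrow\exists a((a)_0\T\gn A\wedge(a)_1\F\gn B)$; using only the projection equations $(\langle c,d\rangle)_0=c$ and $(\langle c,d\rangle)_1=d$ (take $a=\langle c,d\rangle$ in one direction and $c=(a)_0,\,d=(a)_1$ in the other) this becomes $\exists c(c\T\gn A)\wedge\exists d(d\F\gn B)$, i.e.\ $(0\T\gn A)\wedge\neg(0\T\gn B)$ by the reduction lemma; negating gives $0\T\gn{A\to B}\leftrightarrow(0\T\gn A\to 0\T\gn B)$. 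For $(\mathrm{CT}_{\forall})'$, axiom $(\mathrm{CR}_{\forall})$ and the same pairing manipulation turn $\exists a(a\F\gn{\forall v A})$ into $\exists n\,\exists d\,(d\F\gn{A(\dot{n})})$, i.e.\ $\exists n\,\neg(0\T\gn{A(\dot{n})})$; negating gives $0\T\gn{\forall v A}\leftrightarrow\forall n(0\T\gn{A(\dot{n})})$. Throughout I would invoke the primitive-recursive, $\mathsf{PA}$-provable facts that components of a sentence code are themselves sentence codes, so that the reduction lemma legitimately applies to $\gn A$, $\gn B$ and each $\gn{A(\dot{n})}$.

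Finally, for the closing claim I would observe that the substitution $\T x\mapsto 0\T x$ is a translation of $\mathsf{CT}$ into $\mathsf{CR}^{\emptyset}$ sending $(\mathrm{CT}_{=}),(\mathrm{CT}_{\to}),(\mathrm{CT}_{\forall})$ to the derived primed clauses and sending the $\mathsf{PA}$-axioms over $\Lt$ to instances available over $\Lr$ (induction for a translated formula being an instance of $\Lr$-induction in $\mathsf{CR}^{\emptyset}$); as this translation is the identity on $\mathcal{L}$-formulas, every $\mathcal{L}$-theorem of $\mathsf{CT}$ is then a theorem of $\mathsf{CR}^{\emptyset}$. The only genuinely fiddly step, which I would flag as the main obstacle, is the bookkeeping in the $\forall$-clause: matching $\gn{A((\dot{a})_0)}$ against $\gn{A(\dot{n})}$ after decomposing $a$ as a pair, while verifying the sentence-hood side conditions at each decomposition. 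The core logical content — pole-emptiness collapsing realisation to non-refutation — is short.
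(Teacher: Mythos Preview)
Your proposal is correct. The paper does not prove this lemma in-text (it is cited from \cite{hayashi2024compositional}), but its proof of the generalisation, Proposition~\ref{prop:truth-def-GCT}, reveals the intended approach, and that route differs from yours: there one first uses the combinators $\mathsf{i},\mathsf{h},\mathsf{u},\mathsf{s}$ of Lemma~\ref{lem:partial_compositionality_GCR} to manufacture \emph{some} realiser witnessing each compositional direction, and only afterwards invokes $\pole=\emptyset$ to replace that realiser by~$0$. You instead factor out the collapse at the outset---your reduction lemma $a\T\gn A\leftrightarrow\neg\exists b\,(b\F\gn A)$---and then read off the clauses by manipulating refutations alone via $(\mathrm{CR}_{=}),(\mathrm{CR}_{\to}),(\mathrm{CR}_{\forall})$. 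Your route is more elementary (no realisability combinators needed) and isolates the single point at which $\pole=\emptyset$ enters; the paper's route keeps the argument in the idiom of realisability computations and transfers verbatim to settings where explicit realisers must be tracked. The bookkeeping you flag in the $\forall$-clause is genuine: matching $\gn{A((\dot a)_0)}$ with $\gn{A(\dot n)}$ requires term regularity (Proposition~\ref{term-regular}), since $(\dot a)_0$ is not literally a numeral, but this is available in $\mathsf{CR}$ and your remark about ``fiddly'' decomposition covers it.
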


Compositional truth can therefore be understood as the special case ``$\pole = \emptyset$'' in classical realisability.
As the above interpretation does not work for $\mathsf{CR}^+$, the lower bound of $\mathsf{CR}^+$ is obtained in \cite{hayashi2024compositional} by directly deriving $\mathsf{PA} + \mathsf{TI}({<} \varepsilon_{\varepsilon_0})$.
To summarise, we have the following:

\begin{proposition}[{\cite[Theorem~2]{hayashi2024compositional}}]\label{prop:CT_and_CR_with_empty}
The theories $\mathsf{CR}^{\emptyset}$, $\mathsf{CR}^+$, and $\mathsf{CT}$ have exactly the same $\mathcal{L}$-consequences as $\mathsf{PA} + \mathsf{TI}({<} \varepsilon_{\varepsilon_0})$.
\end{proposition}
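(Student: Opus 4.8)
The plan is to pin down all four theories by a single cycle of inclusions of $\mathcal L$-theorems, forcing each to sit between $\mathsf{PA}+\mathsf{TI}({<}\varepsilon_{\varepsilon_0})$ and itself. Concretely I aim to establish
\[
\mathsf{PA}+\mathsf{TI}({<}\varepsilon_{\varepsilon_0})
\ \subseteq_{\mathcal L}\ \mathsf{CR}^+
\ \subseteq_{\mathcal L}\ \mathsf{CR}^{\emptyset}
\ \equiv_{\mathcal L}\ \mathsf{CT}
\ \subseteq_{\mathcal L}\ \mathsf{PA}+\mathsf{TI}({<}\varepsilon_{\varepsilon_0}),
\]
where $\subseteq_{\mathcal L}$ (resp.\ $\equiv_{\mathcal L}$) denotes inclusion (resp.\ coincidence) of $\mathcal L$-theorems. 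The leftmost inclusion is the lower bound already obtained for $\mathsf{CR}^+$ in \cite{hayashi2024compositional}, where the schema $\mathsf{TI}({<}\varepsilon_{\varepsilon_0})$ is derived directly; the second inclusion is the Corollary that $\mathsf{CR}^+$ is a subtheory of $\mathsf{CR}^{\emptyset}$. Since these two inputs are in hand, closing the cycle reduces to the middle equivalence and the final upper bound, and the cycle then collapses so that every member shares the $\mathcal L$-consequences of $\mathsf{PA}+\mathsf{TI}({<}\varepsilon_{\varepsilon_0})$; note that $\mathsf{CT}$ inherits its lower bound for free from the equivalence with $\mathsf{CR}^{\emptyset}$, so no separate lower bound for $\mathsf{CT}$ is needed.

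For the equivalence $\mathsf{CR}^{\emptyset}\equiv_{\mathcal L}\mathsf{CT}$, the direction $\mathsf{CT}\subseteq_{\mathcal L}\mathsf{CR}^{\emptyset}$ is immediate from Lemma~\ref{lem:CR_with_empty}, since $0\,\T\,x$ verifies the compositional truth axioms verbatim. The converse I would obtain by a relative interpretation of $\mathsf{CR}^{\emptyset}$ in $\mathsf{CT}$ that is the identity on $\mathcal L$: put $\pole^{*}:=\emptyset$ and, using that empty-pole realisability collapses to Tarskian truth, define the refutation predicate $\F^{*}$ by primitive recursion on the syntactic complexity of its sentence argument — a false equation refuted by everything and a true one by nothing (invoking $\T$ only at the atomic level), a refutation of $A\to B$ being a pair whose first component realises $A$ and whose second refutes $B$, and a refutation of $\forall vA$ being a pair coding a witness together with a refutation of the corresponding instance — and then set $a\,\T^{*}\,x:\equiv\neg\exists b\,(b\,\F^{*}\,x)$. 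Such a recursion, with $\T$ occurring only atomically, is available because $\mathsf{CT}$ carries full $\Lt$-induction; by induction on complexity one proves the invariant $\exists b\,(b\,\F^{*}\,\gn A)\leftrightarrow\neg\T\gn A$, from which $(\mathrm{Ax}_{\pole})$, $(\mathrm{Ax}_{\T})$ and the three clauses $(\mathrm{CR}_{=})$, $(\mathrm{CR}_{\to})$, $(\mathrm{CR}_{\forall})$ all follow, giving $\mathsf{CR}^{\emptyset}\subseteq_{\mathcal L}\mathsf{CT}$.

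The remaining upper bound $\mathsf{CT}\subseteq_{\mathcal L}\mathsf{PA}+\mathsf{TI}({<}\varepsilon_{\varepsilon_0})$ is the classical proof-theoretic analysis of compositional truth with full induction for the truth predicate, whose arithmetical output is exactly transfinite induction below $\varepsilon_{\varepsilon_0}$ (via partial cut elimination for the associated ramified truth calculus); I would cite this rather than reprove it. I expect the main obstacle to be the converse interpretation $\mathsf{CR}^{\emptyset}\subseteq_{\mathcal L}\mathsf{CT}$: the naive choice $a\,\F^{*}\,x:\equiv\neg\T x$ validates $(\mathrm{CR}_{=})$ and $(\mathrm{CR}_{\to})$ but fails $(\mathrm{CR}_{\forall})$, since the latter forces a refutation of a universal statement to encode a witness alongside an instance-refutation. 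Establishing the coherence and totality of the complexity-recursion defining $\F^{*}$, together with the invariant tying it to $\T$, inside $\mathsf{CT}$ is therefore the technical heart of the argument.
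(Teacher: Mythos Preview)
Your cycle of inclusions is exactly the strategy the paper sketches in the paragraph preceding the proposition, which is not reproved here but cited from \cite{hayashi2024compositional}: the lower bound for $\mathsf{CR}^+$ by direct derivation of transfinite induction, $\mathsf{CR}^+\subseteq\mathsf{CR}^{\emptyset}$ via the corollary to Lemma~\ref{lem:reflection_empty_CR}, $\mathsf{CT}\subseteq_{\mathcal L}\mathsf{CR}^{\emptyset}$ via Lemma~\ref{lem:CR_with_empty}, and the classical upper bound for $\mathsf{CT}$. The only link you spell out beyond what the present paper records---the converse interpretation $\mathsf{CR}^{\emptyset}\subseteq_{\mathcal L}\mathsf{CT}$ via a complexity recursion for $\F^*$ inside $\mathsf{CT}$---is sound, with the minor caveat that the $\to$-clause forces $\T$ (equivalently your $\T^*$, via the invariant) to be applied to possibly compound antecedents rather than only to atoms; this is harmless, since the unfolded definition remains an $\Lt$-formula and the invariant $\exists b\,(b\,\F^{*}\,\gn A)\leftrightarrow\neg\T\gn A$ then follows by the induction you indicate.
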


\section{Friedman--Sheard-style Realisability}\label{ch:realisability_sec:FSR}

Starting from the typed theory $\mathsf{CR}$ for realisability, it seems natural to extend it to type-free theories.
In particular, the main purpose of this paper is to formulate a Friedman-Sheard-style theory $\mathsf{FSR}$ for classical realisability and study its proof-theoretic properties. In addition, we will show that $\mathsf{FSR}$ has a nice property, \emph{self realisability},
which states that if $A$ is derived, then $A$ is realisable provably in the same system (see Section~\ref{sec:self-realisability}).

In this section, we want to formulate a self-referential system of classical realisability.
For this purpose, let us first refer to the case of truth theory.
The so-called system $\mathsf{FS}$, one of Friedman and Sheard's theories of truth (cf.~\cite{friedman1987axiomatic,halbach1994system}), consists of the compositional axioms, where the range of quantification of sentences is not only $\mathcal{L}$, but also $\Lt .$ In addition, $\mathsf{FS}$ has the introduction and elimination rules for the truth predicate.
Here, we restate definitions and basic results on $\mathsf{FS}$ for the current language: 

\begin{definition}[cf.~{\cite[Definition~2.1]{halbach1994system}}]\label{defn:GCT_FS}
The $\Lt$-theory $\mathsf{GCT}$ (generalised compositional truth) consists of $\mathsf{PA}$ 
and the following axioms:
\begin{description}
\item[$(\mathrm{GCT}_{=})$] $\forall \ulcorner s \urcorner, \ulcorner t \urcorner. \ \T\ulcorner s = t \urcorner \leftrightarrow Eq(\ulcorner s \urcorner, \ulcorner t \urcorner)$; 


\item[$(\mathrm{GCT}_{\to})$] $\forall \ulcorner A \urcorner, \ulcorner B \urcorner \in \pred{Sent}_{\Lt}. \ \T\ulcorner A \to B \urcorner \leftrightarrow (\T \ulcorner A \urcorner \to \T\ulcorner B \urcorner) ;$

\item[$(\mathrm{GCT}_{\forall})$] $\forall \ulcorner A_x \urcorner \in \pred{Sent}_{\Lt}. \ \T\ulcorner \forall x A \urcorner \leftrightarrow \forall v (\T \ulcorner A(\dot{v}) \urcorner )$.

\end{description}
The $\Lt$-theory $\mathsf{FS}$ (Friedman-Sheard theory of truth) is the extension of $\mathsf{GCT}$
with the following rules: for each $\Lt$-sentence $A,$
\[
\infer[(\mathrm{NEC})]{\T \ulcorner A \urcorner}{A}
\ \ \ \infer[(\mathrm{CONEC})]{A}{\T \ulcorner A \urcorner}.
\] 
\end{definition}

\begin{lemma}\label{term-regular-GCT}
	Provable in \( \mathsf{GCT} \) is the term regularity principle:
	\[
	\forall \gn {s}, \gn t. \ \forall \gn {A_v} \in \pred{Sent}_{\Lt}.\  \pred{Eq}(\gn s , \gn t ) \to (\T \gn {A( s)} \to \T \gn {A( t)}).
	\]
\end{lemma}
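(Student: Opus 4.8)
The plan is to mimic the proof of Proposition~\ref{term-regular}, replacing the refutation clauses of $\mathsf{CR}$ by the compositional truth clauses of $\mathsf{GCT}$, and to argue by formalised induction on the logical complexity of the coded formula $A$. Since $\mathsf{GCT}$ contains full induction for $\Lt$, this internal induction is available.

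First I would isolate a purely arithmetical lemma, provable already in $\mathsf{PA}$: whenever $\pred{Eq}(\gn s, \gn t)$ holds, then for every term $r(v)$ with at most $v$ free we have $\pred{Eq}(\gn{r(s)}, \gn{r(t)})$. This is shown by induction on (the code of) $r$: the variable case is the hypothesis, the constant and other-variable cases are immediate, and the case $r = f(r_1, \dotsc, r_k)$ follows from the induction hypotheses together with the provable fact that evaluation of the primitive recursive function $f$ respects equality of argument values. This lemma will handle the atomic step.

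Next I would prove the statement itself. Because the implicational clause places a subformula in antecedent position, it is cleanest to strengthen the target to the biconditional $\T\gn{A(s)} \leftrightarrow \T\gn{A(t)}$; this costs nothing, as $\pred{Eq}$ is symmetric. The induction is on the number of logical symbols of $A$, generalised so that $A$ ranges over all sentences obtained by substituting equal-valued closed terms for the free variables (this generalisation is what makes the quantifier step legitimate). In the base case $A(v)$ is an equation and we reduce via $(\mathrm{GCT}_{=})$ to the arithmetical lemma. In the case $A(v) = B(v) \to C(v)$ we use $(\mathrm{GCT}_{\to})$ and apply the induction hypothesis to $B$ and $C$. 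In the case $A(v) = \forall x B(v, x)$ we use $(\mathrm{GCT}_{\forall})$: for each numeral $\num w$ the formula $B(v, \num w)$ has strictly fewer logical symbols than $A$, so the induction hypothesis yields $\T\gn{B(s, \dot{w})} \leftrightarrow \T\gn{B(t, \dot{w})}$ for all $w$, and universally quantifying over $w$ gives the claim.

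The main obstacle is the bookkeeping of the internal induction rather than any single step: one must set up a complexity measure (number of logical symbols) that is preserved under numeral substitution and strictly decreases when passing from $\forall x B$ to its instances, while tracking that the number of free variables changes along the way. The clean solution is to carry out the induction on a suitably generalised statement quantifying over simultaneous substitutions of tuples of closed terms for the free variables, so that in the quantifier step the induction hypothesis applies directly to the immediate subformula $B$. Everything else is a routine transcription of the $\mathsf{CR}$-proof of Proposition~\ref{term-regular}, with $\F$ replaced by $\T$ and the $\mathsf{CR}$ clauses replaced by their $\mathsf{GCT}$ counterparts.
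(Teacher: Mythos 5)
Your overall strategy --- a formalised induction inside \( \mathsf{GCT} \) on the number of logical symbols of the coded formula, strengthened to a biconditional and generalised to simultaneous substitution of tuples of equal-valued closed terms, resting on a purely arithmetical lemma that substitution of equal-valued terms preserves \( \pred{Eq} \) --- is the natural one (the paper itself supplies no proof of this lemma to compare against), and the equation base case, the implication step and the quantifier step all go through as you describe, granted that \( \mathsf{GCT} \) is read as containing induction for all \( \Lt \)-formulas.

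There is, however, a genuine gap in the base case. The lemma quantifies over \( \gn{A_v} \in \pred{Sent}_{\Lt} \), and the atomic formulas of \( \Lt \) comprise not only equations but also truth-ascriptions \( \T(r(v)) \) in which the distinguished variable may occur inside the term \( r \). Your base case handles only equations via \( (\mathrm{GCT}_{=}) \); for \( A(v) = \T(r(v)) \) one needs \( \pred{Eq}(\gn s,\gn t) \to (\T\gn{\T(r(s))} \to \T\gn{\T(r(t))}) \), and no axiom of \( \mathsf{GCT} \) constrains \( \T \) on codes of atomic \( \T \)-sentences --- \( (\mathrm{GCT}_{=}) \), \( (\mathrm{GCT}_{\to}) \) and \( (\mathrm{GCT}_{\forall}) \) speak only of equations, implications and universal quantifications, and Fact~\ref{fact:inconsistency_GCT} rules out the natural candidate axiom. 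The omission is not merely a missing routine case: expand \( \mathbb N \) by a compositional interpretation of \( \T \) whose restriction to atomic \( \T \)-sentences holds of \( \gn{\T(u)} \) exactly when \( u \) is a numeral; all axioms of \( \mathsf{GCT} \) (including full \( \Lt \)-induction) are satisfied, yet regularity fails for \( A(v) = \T(v) \) with \( s = \num 0 \) and \( t = \num 0 + \num 0 \). This is precisely the phenomenon the authors flag for \( \mathsf{GCR} \), where they repair it by adopting \( (\mathrm{Reg}) \) as an explicit axiom. So your induction as written establishes the principle only for formulas in which \( \T \) is applied to terms not containing the substituted variable (in particular for all \( \mathcal L \)-formulas, where your argument is complete and matches the standard \( \mathsf{CT} \) proof); to obtain the full \( \Lt \)-statement you would need to assume, or the definition of \( \mathsf{GCT} \) would need to include, a regularity axiom for atomic \( \T \)-formulas supplying the missing base case.
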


\begin{fact}[\cite{Pertun-Broberg:2025On-proofs}]\label{fact:FS_GCT-NEC-TI}
$\mathsf{FS}$ and $\mathsf{GCT} + \mathsf{NEC} + \mathsf{TI}({<} \varphi20)$ have exactly the same theorems.
\end{fact}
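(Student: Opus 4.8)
The plan is to prove Fact~\ref{fact:FS_GCT-NEC-TI}, namely that $\mathsf{FS}$ and $\mathsf{GCT} + \mathsf{NEC} + \mathsf{TI}({<}\varphi 20)$ have the same theorems, by establishing containment in both directions. The only difference between the two systems is that $\mathsf{FS}$ carries the full pair of rules $(\mathrm{NEC})$ and $(\mathrm{CONEC})$, whereas the right-hand theory replaces $(\mathrm{CONEC})$ by the transfinite induction schema $\pred{TI}({<}\varphi 20)$ (both retain $(\mathrm{NEC})$). Thus the task splits into: (i) showing that $(\mathrm{CONEC})$ together with $\mathsf{GCT}+\mathsf{NEC}$ proves each instance of $\pred{TI}({<}\varphi 20)$, and (ii) showing conversely that $\mathsf{GCT}+\mathsf{NEC}$ augmented with $\pred{TI}({<}\varphi 20)$ derives every consequence obtainable using $(\mathrm{CONEC})$.

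For direction (i), the key idea is that the co-necessitation rule allows one to \emph{reflect} provable $\T$-statements back into the object language, and the characteristic strength of $\mathsf{FS}$ arises from iterating necessitation and co-necessitation along the compositional truth axioms. First I would recall the standard analysis of $\mathsf{FS}$ via transfinite iterations of a reflection/truth operator: one shows by induction on ordinal notations $\alpha < \varphi 20$ that $\mathsf{FS}$ proves suitably stratified induction principles, exploiting that $(\mathrm{NEC})$ lets us internalise a proof of $\pred{Prog}_x A(x)$ under the truth predicate, while $(\mathrm{CONEC})$ lets us extract $A(\alpha)$ externally. The closure property of $\varphi 20$ under $\alpha \mapsto \varepsilon_\alpha$, stated in the conventions, is exactly what is needed to carry this iteration through every notation below $\varphi 20$ and thereby obtain all instances of $\pred{TI}({<}\varphi 20)$.

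For direction (ii), I would argue that any application of $(\mathrm{CONEC})$ can be simulated in $\mathsf{GCT}+\mathsf{NEC}+\pred{TI}({<}\varphi 20)$. The approach is to give a cut-elimination or proof-theoretic reduction argument: embed derivations of $\mathsf{FS}$ into an infinitary or ramified system where the effect of $(\mathrm{CONEC})$ on a derivation of bounded ``truth-nesting'' is bounded by a transfinite induction of height below $\varphi 20$. Concretely, one assigns to each $\mathsf{FS}$-derivation an ordinal measuring the depth of alternation between $\T$-introduction and $\T$-elimination, shows this ordinal is always below $\varphi 20$, and then replaces each co-necessitation step by an appeal to the corresponding induction instance that is now available as an axiom. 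This reconstructs the conclusion without $(\mathrm{CONEC})$.

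The hard part will be direction (ii): faithfully capturing the proof-theoretic effect of $(\mathrm{CONEC})$ by a fixed induction schema requires an ordinal analysis of $\mathsf{FS}$, controlling how the interplay of the two rules generates exactly the strength $\varphi 20$ and no more. This is a delicate bookkeeping of derivation heights and truth-complexity, and is precisely where the cited work \cite{Pertun-Broberg:2025On-proofs} does the substantive labour; I would therefore lean on that analysis for the sharp bound, presenting the two inclusions as above and citing the ordinal-theoretic reduction for the nontrivial containment.
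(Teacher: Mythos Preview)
The paper does not provide a proof of this statement at all: it is stated as a \emph{Fact} with a bare citation to \cite{Pertun-Broberg:2025On-proofs}, and no argument is given or even sketched in the present paper. So there is no ``paper's own proof'' to compare your proposal against.

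That said, your outline is a sensible reconstruction of what such a proof must contain. Direction~(i), that $\mathsf{FS}$ derives every $\Lt$-instance of $\pred{TI}({<}\varphi 20)$, is already available in the literature and is used elsewhere in the paper (the proof of Proposition~\ref{upper-bound_FSR-empty} invokes \cite[Theorem~2.41]{leigh2010ordinal} for exactly this); your description of iterating $(\mathrm{NEC})$/$(\mathrm{CONEC})$ to climb the $\varepsilon$-hierarchy is the standard picture. Direction~(ii), admissibility of $(\mathrm{CONEC})$ over $\mathsf{GCT}+\mathsf{NEC}+\pred{TI}({<}\varphi 20)$, is the genuinely new content of the cited reference, and you correctly flag it as the hard part and defer to that work. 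One caution: be careful to distinguish the claim here (identity of \emph{all} $\Lt$-theorems) from Halbach's older result \cite[Theorem~5.13]{halbach1994system} (identity of $\mathcal{L}$-theorems only); your sketch is compatible with the stronger claim, but the ordinal bookkeeping must track full $\Lt$-derivations, not just arithmetic consequences.
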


Finally, it is worth mentioning the well known fact about $\mathsf{GCT}$.
In contrast to the axiom $(\mathrm{GCT}_{=})$, the Tarski-biconditional for the truth predicate yields a paradox:

\begin{fact}[cf.~\cite{friedman1987axiomatic}]\label{fact:inconsistency_GCT}
\begin{enumerate}
\item $\mathsf{GCT}$ with the following axiom $(\mathrm{GCT}_{\T})$ is inconsistent:
\begin{align}
\forall \ulcorner A \urcorner \in \pred{Sent}_{\Lt}. \ \T \ulcorner \T\ulcorner A \urcorner \urcorner \leftrightarrow \T\ulcorner A \urcorner. \tag{$\mathrm{GCT}_{\T}$}
\end{align}
\item $\mathsf{GCT}$ with the following schema $(\T \mhyph \mathrm{Out})$ is inconsistent:
\begin{align}
 \T \ulcorner A \urcorner \to A, \text{ for each $\Lt$-sentence $A$.} \tag{$\T \mhyph \mathrm{Out}$}
\end{align}
\end{enumerate}
\end{fact}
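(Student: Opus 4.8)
The plan is to treat both claims as instances of the Liar paradox, the only genuine subtlety being how to move a provable equivalence inside the truth predicate in the absence of the rule \( \mathrm{NEC} \). First I would record two facts already available in \( \mathsf{GCT} \). From \( (\mathrm{GCT}_{=}) \) and the falsity of \( \pred{Eq}(\gn 0, \gn 1) \) we obtain \( \mathsf{GCT} \vdash \neg \T \gn{\bot} \), since \( \bot \) abbreviates \( 0 = 1 \). Combining this with \( (\mathrm{GCT}_{\to}) \) applied to \( \neg A = (A \to \bot) \) yields the \emph{internal negation law}
\[
\mathsf{GCT} \vdash \T \gn{\neg A} \leftrightarrow \neg \T \gn A \qquad \text{for every } \Lt\text{-sentence } A .
\]
These two facts are the engine of both arguments.

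For part (2), I would show that \( (\T \mhyph \mathrm{Out}) \) already forces its converse. Instantiating \( (\T \mhyph \mathrm{Out}) \) at \( \neg A \) gives \( \T \gn{\neg A} \to \neg A \); rewriting the antecedent by the internal negation law turns this into \( \neg \T \gn A \to \neg A \), whose contrapositive is \( A \to \T \gn A \). Hence \( \mathsf{GCT} + (\T \mhyph \mathrm{Out}) \) proves full disquotation \( \T \gn A \leftrightarrow A \) for all \( \Lt \)-sentences. Taking a Gödel fixed point \( L \) with \( \mathsf{GCT} \vdash L \leftrightarrow \neg \T \gn L \) and instantiating disquotation at \( A = L \) gives \( \T \gn L \leftrightarrow L \leftrightarrow \neg \T \gn L \), a contradiction. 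Here every use of a schema is at the single sentence \( L \), so no reasoning beneath \( \T \) is needed beyond the object-level fixed point.

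For part (1), the difficulty is that \( (\mathrm{GCT}_{\T}) \) only supplies disquotation for sentences of the shape \( \T \gn A \), so I must manufacture a Liar of that form, which forces me to substitute the fixed-point equivalence \emph{underneath} \( \T \). This is where I expect the main obstacle to lie, and it is resolved by term regularity (Lemma~\ref{term-regular-GCT}). Choosing \( L \) by the usual diagonal construction as the sentence \( \neg \T(t) \), where \( t \) is a closed term provably denoting \( \gn L \) and thus equal in value to the numeral \( \num{\gn L} \), Lemma~\ref{term-regular-GCT} applied to the context \( \neg \T(v) \) yields \( \mathsf{GCT} \vdash \T \gn L \leftrightarrow \T \gn{\neg \T \gn L} \). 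Then I would chain
\[
\T \gn L \leftrightarrow \T \gn{\neg \T \gn L} \leftrightarrow \neg \T \gn{\T \gn L} \leftrightarrow \neg \T \gn L ,
\]
using the internal negation law for the middle step and \( (\mathrm{GCT}_{\T}) \) at \( A = L \) for the last. The resulting \( \T \gn L \leftrightarrow \neg \T \gn L \) is again contradictory.

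In both cases the real content is the internal negation law together with the term-invariance of \( \T \); once these are in place, the contradiction is the textbook Liar. I would therefore expect the write-up to spend its effort on the two preliminary facts and, for part (1), on making precise that the diagonal sentence differs from its unfolding only by a term of equal value so that Lemma~\ref{term-regular-GCT} applies.
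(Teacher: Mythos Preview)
The paper does not supply its own proof of this statement: it is recorded as a \textbf{Fact} with a citation to Friedman and Sheard \cite{friedman1987axiomatic} and is invoked later only as a black box (in Propositions~\ref{inconsistency_GCR1} and~\ref{inconsistency_GCR2}). So there is no in-paper argument to compare against.

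That said, your proposal is correct and is essentially the standard argument. A couple of remarks on the execution. For part~(2), the derivation of full disquotation from \( (\T\mhyph\mathrm{Out}) \) via the internal negation law is clean and goes through exactly as you describe. For part~(1), the point you single out as ``the main obstacle'' is indeed the only delicate step: the diagonal sentence \( L \) is literally \( \neg \T(t) \) for a closed term \( t \) with \( \mathsf{PA} \vdash t = \num{\gn L} \), so \( L \) and \( \neg \T \gn L \) (with the numeral) are syntactically distinct sentences and the passage \( \T \gn L \leftrightarrow \T \gn{\neg \T \gn L} \) really does require Lemma~\ref{term-regular-GCT} applied to the context \( \neg \T(v) \). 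Once that is made explicit, the chain \( \T\gn L \leftrightarrow \T\gn{\neg\T\gn L} \leftrightarrow \neg\T\gn{\T\gn L} \leftrightarrow \neg\T\gn L \) is immediate from internal negation and a single instance of \( (\mathrm{GCT}_{\T}) \). Your write-up plan, concentrating the effort on the two preliminary facts and the term-regularity step, is exactly right.
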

The counterpart in classical realisability of the above paradoxical result will be given in Propositions~\ref{inconsistency_GCR1} and~\ref{inconsistency_GCR2}.

\subsection{Generalised Compositional Realisability $\mathsf{GCR}$}

Similar to $\mathsf{GCT}$, we can straightforwardly generalise $\mathsf{CR}$ to $\mathsf{GCR}:$

\begin{definition}[$\mathsf{GCR}$]\label{defn:GCR}
An $\Lr$-theory $\mathsf{GCR}$ (generalised compositional realisability) consists of $\mathsf{PA}$ and the universal closures of the following axioms:
\begin{description}
\item[$(\mathrm{Ax}_{\pole})$] $\pred{T}_1(a,b,c) \to ( \pred U(c) \in \pole \to \langle a, b \rangle \in \pole)$

\item[$(\mathrm{Ax}_{\T})$] $\forall \ulcorner A \urcorner \in Sent_{\Lr}. \ a \T \ulcorner A \urcorner \leftrightarrow \forall b ( b \F \ulcorner A \urcorner \to \langle a, b \rangle \in \pole )$

\item[$(\mathrm{Reg})$] $\forall \ulcorner s \urcorner, \ulcorner t \urcorner. \ \forall \ulcorner A_x \urcorner \in Sent_{\Lr}. \ \pred{Eq}( \ulcorner s \urcorner,\ulcorner t \urcorner) \to ( a \F \ulcorner A(s) \urcorner \to a \F \ulcorner A(t) \urcorner )$

\item[$(\mathrm{GCR}_{P1})$] $\neg P \vec{x} \to a \F\ulcorner P \dot{\vec{x}} \urcorner$

\item[$(\mathrm{GCR}_{P2})$] $P \vec{x} \to (a \F\ulcorner P \dot{\vec{x}} \urcorner \leftrightarrow a \in \pole)$

\item[$(\mathrm{GCR}_{\to})$] $\forall \ulcorner A \urcorner, \ulcorner B \urcorner \in Sent_{\Lr}. \ a \F\ulcorner A \to B \urcorner \leftrightarrow \big( (a)_0 \T \ulcorner A \urcorner \land (a)_1\F\ulcorner B \urcorner \big)$

\item[$(\mathrm{GCR}_{\forall})$] $\forall \ulcorner A_x \urcorner \in Sent_{\Lr} . \ a \F\ulcorner \forall x A \urcorner \leftrightarrow (a)_1 \F \ulcorner A((\dot{a})_0) \urcorner $

\end{description}
In the above, 
$P$ ranges over atomic predicates of $\mathcal{L} \cup \{ x \in \pole \}.$
\end{definition}

\begin{remark}
Note that the above axiom $(\mathsf{Ax}_{\pole})$ is exactly the same as in Definition~\ref{defn:CR}, but we have chosen not to use abbreviations because its precise form is important to prove the realisability of $(\mathrm{Ax}_{\pole})$ in Lemma~\ref{realisability_Axpole}.
Similarly, to simplify the proof of Lemmas~\ref{realisability_GCRnegP} and \ref{realisability_GCRP}, we separate (and weaken) the axioms for atomic predicates into $(\mathrm{GCR}_{P1})$ and $(\mathrm{GCR}_{P2})$ (cf. Remark~\ref{rmk:equiv-CR_=}). 
As stated, the axioms $(\mathrm{GCR}_{P1})$ and $(\mathrm{GCR}_{P2})$ do not imply the term regularity principle, contrary to Proposition~\ref{term-regular}.
For example, if $n \notin \pole$ for some numeral $n$, then we have $a F \ulcorner n \in \pole \urcorner$ by $(\mathrm{GCR}_{P1})$.
In this case, however, $(\mathrm{GCR}_{P1})$ says nothing about $a F \ulcorner t \in \pole \urcorner$ if $t$ is not a numeral.
Thus, we explicitly add the principle in the form of axiom $(\mathrm{Reg})$. 
\end{remark}


The system $\mathsf{GCR}$ does not have compositional axioms for $x \T y$ explicitly,
but we can prove the following useful facts, just as for $\mathsf{CR}$:

\begin{lemma}\label{lem:truth-condition-P}
For each atomic predicate $P \in \mathcal{L} \cup \{ \pole \}$, the following are derived in $\mathsf{GCR}$:
\begin{enumerate}
\item $\neg P \vec{x} \to \big( a \T\ulcorner P \dot{\vec{x}} \urcorner \leftrightarrow \forall y ( \langle a, y \rangle \in \pole ) \big)$

\item $P \vec{x} \to \big( a \T\ulcorner P \dot{\vec{x}} \urcorner \leftrightarrow \forall y ( y \in \pole \to \langle a, y \rangle \in \pole ) \big)$
\end{enumerate}
\end{lemma}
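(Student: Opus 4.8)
The plan is to unfold the realisation axiom $(\mathrm{Ax}_{\T})$ at the atomic code $\ulcorner P \dot{\vec{x}} \urcorner$ and then simplify the resulting quantifier over refutations using the two atomic refutation axioms $(\mathrm{GCR}_{P1})$ and $(\mathrm{GCR}_{P2})$. Each item is obtained by arguing inside $\mathsf{GCR}$ under the corresponding case assumption on $P\vec{x}$, so the whole argument is a case split followed by elementary first-order reasoning.

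First I would record the syntactic side condition needed to apply $(\mathrm{Ax}_{\T})$: for any values of $\vec{x}$, the code $\ulcorner P \dot{\vec{x}} \urcorner$ is provably an element of $\pred{Sent}_{\Lr}$. This is a primitive recursive fact provable already in $\mathsf{PA}$, since instantiating an atomic predicate at numerals yields a sentence. With this in hand, instantiating $(\mathrm{Ax}_{\T})$ gives $a \T \ulcorner P \dot{\vec{x}} \urcorner \leftrightarrow \forall b ( b \F \ulcorner P \dot{\vec{x}} \urcorner \to \langle a, b \rangle \in \pole )$, which is the common starting point for both items.

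For item~1, I assume $\neg P \vec{x}$. The universal closure of $(\mathrm{GCR}_{P1})$ then yields $\forall b\,( b \F \ulcorner P \dot{\vec{x}} \urcorner )$, so the antecedent of the implication inside the unfolded $(\mathrm{Ax}_{\T})$ holds for every $b$; the right-hand side therefore collapses to $\forall b\,( \langle a, b \rangle \in \pole )$, which after renaming is the desired $\forall y\,( \langle a, y \rangle \in \pole )$. For item~2, I assume $P \vec{x}$. Here the universal closure of $(\mathrm{GCR}_{P2})$ supplies $b \F \ulcorner P \dot{\vec{x}} \urcorner \leftrightarrow b \in \pole$ for every $b$; substituting this equivalence for the antecedent in the unfolded $(\mathrm{Ax}_{\T})$ turns the right-hand side into $\forall b\,( b \in \pole \to \langle a, b \rangle \in \pole )$, i.e. the required $\forall y\,( y \in \pole \to \langle a, y \rangle \in \pole )$.

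Since everything reduces to propositional and first-order manipulation of the axioms, I do not anticipate a serious obstacle. The only points requiring care are the two bookkeeping matters: verifying the sentence-hood side condition so that the restricted quantifier in $(\mathrm{Ax}_{\T})$ actually fires, and correctly handling the universal closures, so that the bound refutation variable $b$ of $(\mathrm{Ax}_{\T})$ is the very variable over which $(\mathrm{GCR}_{P1})$ and $(\mathrm{GCR}_{P2})$ are generalised. Once these are aligned, both biconditionals follow immediately.
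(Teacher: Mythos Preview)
Your proposal is correct and is precisely the intended argument: the paper states the lemma without proof, but the derivation you give---unfolding $(\mathrm{Ax}_{\T})$ at the atomic code and then simplifying the refutation condition via $(\mathrm{GCR}_{P1})$ or $(\mathrm{GCR}_{P2})$ according to the case assumption---is exactly the route implicit in the axiomatisation and visible in the paper's treatment of the analogous Lemma~\ref{explicit_formal_realisability}.
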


\begin{lemma}
[cf.~{\cite[Lemma~3]{hayashi2024compositional}}]\label{lem:partial_compositionality_GCR}
	There exist numbers $\mathsf{i}$, $\mathsf{h}$, $\mathsf{u}$, $\mathsf{s}$, and $\mathsf{e}$ such that $\mathsf{GCR}$ derives each of the following:
\begin{enumerate}
\item
\(
\forall \gn A, \gn B \in \pred{Sent}_{\Lr}. \ \forall a , b \big( a \T \gn{ A \to B } \wedge b \T \gn A \to ({\mathsf{i}} \cdot \langle a,b\rangle) \T \ulcorner B \urcorner \big).
\)

\item
$\forall \gn A, \gn B \in \pred{Sent}_{\Lr}. \ \forall a \Big( \forall b \big( b \T\ulcorner A \urcorner \to (a \cdot b) \T \ulcorner B \urcorner \big) \to (\mathsf{h} \cdot a) \T \ulcorner A \to B \urcorner \Big).$

\item
\(
\forall \gn {A_x}  \in \pred{Sent}_{\Lr}. \ \forall x \big( ( a\cdot x ) \T \gn {A(\dot{ x})} \big) \to (\mathsf{u}\cdot a)\T \gn {\forall x A}. 
\)

\item
\(
\forall \gn {A_x} \in \pred{Sent}_{\Lr}. \ \forall a \Big( a \T \gn {\forall x A} \to \forall y \big( (\mathsf{s} \cdot \langle a,y \rangle) \T \gn {A(\dot{ y})} \big) \Big).
\)

\item
\(
\forall \gn {A_x}  \in \pred{Sent}_{\Lr}. \ \forall y \big(  a\T \gn {A(\dot{ y})} \to (\mathsf{e}\cdot \langle a,y \rangle)\T \gn {\exists x A} \big). 
\)
\end{enumerate}
\end{lemma}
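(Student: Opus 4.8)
The plan is to read this as the internal, \( \Lr \)-sentence version of the set-theoretic realisers of Fact~\ref{fact:number-realisability}: each of \( \mathsf i, \mathsf h, \mathsf u, \mathsf s, \mathsf e \) is a fixed standard index, obtained from the (formalised) s-m-n theorem as an index for an explicitly described partial recursive operation, and \( \mathsf{PA} \) proves the corresponding defining \( \simeq \)-equation. The realisability content of each clause is then derived in \( \mathsf{GCR} \) by a uniform routine: unfold the goal \( (\cdots)\T\gn C \) with \( (\mathrm{Ax}_{\T}) \), take an arbitrary refutation of \( C \), decompose it with the appropriate compositional axiom, apply the hypothesis, and close with \( (\mathrm{Ax}_{\pole}) \). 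Since \( (\mathrm{Ax}_{\T}) \), \( (\mathrm{GCR}_{\to}) \) and \( (\mathrm{GCR}_{\forall}) \) already quantify over \( \pred{Sent}_{\Lr} \) and mention only the \emph{outermost} connective, the generalisation from \( \mathcal L \) to \( \Lr \) is automatic: no step inspects the internal structure of the immediate subformulas.

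For the four logical clauses the indices come in two shapes. For the elimination clauses (1) and (4) I would put \( (\mathsf i\cdot\langle a,b\rangle)\cdot c\simeq\langle a,\langle b,c\rangle\rangle \) and note \( \mathsf s \) may be taken equal to \( \mathsf i \), reading \( b \) as the realiser of \( A \) in (1) and as the witness \( y \) in (4). Given a refutation \( c \) of the conclusion, \( (\mathrm{GCR}_{\to}) \) (resp.\ \( (\mathrm{GCR}_{\forall}) \), with \( \langle y,c\rangle \)) makes \( \langle b,c\rangle \) a refutation of the premise \( A\to B \) (resp.\ of \( \forall xA \)); the realiser \( a \) of that premise yields \( \langle a,\langle b,c\rangle\rangle\in\pole \) via \( (\mathrm{Ax}_{\T}) \), and \( (\mathrm{Ax}_{\pole}) \) gives \( \langle\mathsf i\cdot\langle a,b\rangle,c\rangle\in\pole \), which is exactly the desired realisation. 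For the introduction clauses (2) and (3) I would put \( (\mathsf h\cdot a)\cdot d\simeq\langle a\cdot(d)_0,(d)_1\rangle \) and take \( \mathsf u=\mathsf h \). Given a refutation \( d \), \( (\mathrm{GCR}_{\to}) \) splits it as \( (d)_0\T\gn A \), \( (d)_1\F\gn B \) (resp.\ \( (\mathrm{GCR}_{\forall}) \) gives \( (d)_1\F\gn{A((\dot d)_0)} \)); applying the hypothesis at \( (d)_0 \) makes \( a\cdot(d)_0 \) a realiser of the relevant instance, whence \( \langle a\cdot(d)_0,(d)_1\rangle\in\pole \) and, by closure, \( \langle\mathsf h\cdot a,d\rangle\in\pole \).

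The only clause requiring genuine composition is the existential clause (5). Here I would unfold \( \gn{\exists xA} \) as \( \gn{(\forall x\neg A)\to\bot} \), with \( \neg A=A\to\bot \) and \( \bot \) the false equation \( 0=1 \). Assuming \( a\T\gn{A(\dot y)} \), I define \( \mathsf e \) by \( (\mathsf e\cdot\langle a,y\rangle)\cdot d\simeq\langle r,(d)_1\rangle \) where \( r=\mathsf i\cdot\langle\mathsf s\cdot\langle(d)_0,y\rangle,a\rangle \). Given \( d\F\gn{\exists xA} \), clause \( (\mathrm{GCR}_{\to}) \) yields \( (d)_0\T\gn{\forall x\neg A} \); clause (4) applied to \( \neg A \) turns this into \( (\mathsf s\cdot\langle(d)_0,y\rangle)\T\gn{A(\dot y)\to\bot} \), and clause (1) with the hypothesis \( a\T\gn{A(\dot y)} \) gives \( r\T\gn\bot \). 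Because \( 0=1 \) is false, \( (\mathrm{GCR}_{P1}) \) makes \emph{every} number a refutation of \( \bot \), so \( r\T\gn\bot \) yields \( \langle r,(d)_1\rangle\in\pole \); closure \( (\mathrm{Ax}_{\pole}) \) then gives \( \langle\mathsf e\cdot\langle a,y\rangle,d\rangle\in\pole \), i.e.\ \( (\mathsf e\cdot\langle a,y\rangle)\T\gn{\exists xA} \) by \( (\mathrm{Ax}_{\T}) \).

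The difficulty will be bookkeeping rather than conceptual. First, the index definitions must be carried out inside \( \mathsf{PA} \), and I must check that each defining application is total as a \emph{code} — in particular the nested \( r \) of clause (5) — even though inner applications such as \( a\cdot(d)_0 \) need not converge off the intended inputs; this is precisely what justifies the use of \( x\cdot y \) as a term. Second, I must keep the substitution conventions straight so that the output \( \gn{A((\dot d)_0)} \) of \( (\mathrm{GCR}_{\forall}) \) literally coincides with the instance \( \gn{A(\dot y)} \) supplied by the hypothesis at \( x=(d)_0 \), and likewise that \( \gn{\neg A(\dot y)} \) equals \( \gn{A(\dot y)\to\bot} \). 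I expect clause (5) to absorb essentially all of the work, the other four being uniform instances of the two patterns above.
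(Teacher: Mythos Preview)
Your proposal is correct and follows exactly the approach implicit in the paper, which simply cites the corresponding lemma for \(\mathsf{CR}\) in~\cite{hayashi2024compositional} without giving a new proof; your observation that the generalisation to \(\Lr\) is automatic because \((\mathrm{Ax}_{\T})\), \((\mathrm{GCR}_{\to})\) and \((\mathrm{GCR}_{\forall})\) inspect only the outermost connective is precisely the point. The concrete indices you give (including the identification \(\mathsf s=\mathsf i\), \(\mathsf u=\mathsf h\)) and the unfolding of \(\exists x A\) for clause~(5) are all in order.
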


Using the above lemma, we now generalise the relative truth definability result (Lemma~\ref{lem:CR_with_empty}) to that for $\mathsf{GCR}$.
We define a translation $\mathcal{T}_{\mathsf{FS}}: \Lt \to \Lr$ as follows:
\begin{equation}
\mathcal{T}_{\mathsf{FS}}(A) =
\begin{cases}
s=t & \text{if $A$ is $s=t$ for some $s,t$,} \notag \\
\mathcal{T}_{\mathsf{FS}}(B) \to \mathcal{T}_{\mathsf{FS}}(C) & \text{if $A$ is $ B \to C$} \notag \\
\forall x \mathcal{T}_{\mathsf{FS}}(B(x)) & \text{if $A$ is $ \forall x B(x)$} \notag \\
0 \T (\tau (t)) & \text{if $A$ is $ \T(t)$ for some $t$}, \notag
\end{cases}
\end{equation}
where $\tau(x)$ is a primitive recurive representation of $\mathcal{T}_{\mathsf{FS}}$. Thus, we have $\mathsf{PA} \vdash \tau (\ulcorner A \urcorner) = \ulcorner \mathcal{T}_{\mathsf{FS}}(A) \urcorner$ for each $A \in \mathcal{L}_T$.

\begin{proposition}\label{prop:truth-def-GCT}
Let $\mathcal{T}_{\mathsf{FS}}$ be as above.
For any $\mathcal{L}_T$-formula $A$, if $\mathsf{GCT} \vdash A$, then $\mathsf{GCR}^{\emptyset} \vdash \mathcal{T}_{\mathsf{FS}}(A)$.
\end{proposition}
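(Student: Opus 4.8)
The plan is to argue by induction on the length of a $\mathsf{GCT}$-derivation of $A$, exploiting that $\mathcal{T}_{\mathsf{FS}}$ commutes with the logical connectives and with substitution. Since $\mathcal{T}_{\mathsf{FS}}$ leaves $\to$ and $\forall$ intact and sends an $\Lt$-atom $\T(t)$ to the $\Lr$-formula $0\T\tau(t)$, the translation of a logical axiom is again a logical axiom and the translation of a rule application (modus ponens or generalisation) is the same rule applied to the translated premisses; these cases are immediate from the induction hypothesis. It therefore remains to show that the $\mathcal{T}_{\mathsf{FS}}$-translation of each proper axiom of $\mathsf{GCT}$ is a theorem of $\mathsf{GCR}^{\emptyset}$.

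The heart of the matter is a generalisation of Lemma~\ref{lem:CR_with_empty} from $\mathcal{L}$-sentences to $\Lr$-sentences: I would first prove that, over $\mathsf{GCR}^{\emptyset}$, the predicate $0\T x$ satisfies the compositional clauses $(\mathrm{CT}_{=})'$, $(\mathrm{CT}_{\to})'$ and $(\mathrm{CT}_{\forall})'$ with $\pred{Sent}_{\mathcal{L}}$ replaced throughout by $\pred{Sent}_{\Lr}$. The essential observation is that the axiom $\pole=\emptyset$ collapses $(\mathrm{Ax}_{\T})$ to the \emph{irrefutability} condition $a\T\ulcorner A\urcorner\leftrightarrow\forall b\,\neg(b\F\ulcorner A\urcorner)$, whose right-hand side does not mention $a$; hence for every $\Lr$-sentence realisability is two-valued and independent of the realiser. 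Feeding this into the refutation clauses yields the compositional biconditionals, and crucially without decomposing the subsentences: for equality, $(\mathrm{GCR}_{P1})$ and $(\mathrm{GCR}_{P2})$ give $0\T\ulcorner s=t\urcorner\leftrightarrow\pred{Eq}(\ulcorner s\urcorner,\ulcorner t\urcorner)$ (and Lemma~\ref{lem:truth-condition-P} disposes of the remaining atomic predicates uniformly); for implication, $(\mathrm{GCR}_{\to})$ together with irrefutability shows that $\ulcorner A\to B\urcorner$ is realised iff no realiser of $A$ accompanies a refutation of $B$, which rearranges to $0\T\ulcorner A\urcorner\to 0\T\ulcorner B\urcorner$ for arbitrary $\Lr$-sentences $A,B$; and for the universal quantifier, $(\mathrm{GCR}_{\forall})$ similarly reduces realisability of $\ulcorner\forall x A\urcorner$ to $\forall v(0\T\ulcorner A(\dot v)\urcorner)$.

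Granting these generalised clauses, the axiom cases close by unwinding the definition of $\mathcal{T}_{\mathsf{FS}}$ together with the provable commutation of its representation $\tau$ with the coded syntactic operations. Concretely, $\mathsf{PA}$ proves $\tau(\deq(x,y))=\deq(x,y)$, $\tau(\dra(x,y))=\dra(\tau(x),\tau(y))$, $\tau(\dfa(x,y))=\dfa(x,\tau(y))$ and $\tau(\sub(x,v))=\sub(\tau(x),v)$, and moreover $\pred{Sent}_{\Lt}(x)\to\pred{Sent}_{\Lr}(\tau(x))$, all verified by formalised induction on the syntactic build-up of $x$. Using them, the translation of $(\mathrm{GCT}_{\to})$ becomes, for $\Lt$-sentence codes $\ulcorner A\urcorner,\ulcorner B\urcorner$, the statement $0\T(\tau(\ulcorner A\urcorner)\mathbin{\dra}\tau(\ulcorner B\urcorner))\leftrightarrow(0\T\tau(\ulcorner A\urcorner)\to 0\T\tau(\ulcorner B\urcorner))$, and since $\tau(\ulcorner A\urcorner),\tau(\ulcorner B\urcorner)$ are $\Lr$-sentences this is exactly an instance of the generalised $(\mathrm{CT}_{\to})'$ applied once at the top level; the cases $(\mathrm{GCT}_{=})$ and $(\mathrm{GCT}_{\forall})$ are handled identically, while the translated instances of $\mathsf{PA}$-induction over $\Lt$ become instances of $\mathsf{PA}$-induction over $\Lr$, available in $\mathsf{GCR}^{\emptyset}$.

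I expect the main obstacle to lie in this last bookkeeping: ensuring that $\tau$ \emph{provably and uniformly} commutes with the coded operations and respects sentencehood, so that each quantified compositional axiom of $\mathsf{GCT}$ lines up, after translation, with a single top-level instance of the generalised clause for $0\T$. The one conceptually delicate point is that these clauses must be secured for the whole of $\Lr$ and not merely for $\mathcal{L}$, since $\tau$ carries $\Lt$-sentences into $\Lr$; this is precisely where the assumption $\pole=\emptyset$ is indispensable, as $\mathsf{GCR}$ alone does not force realisability to be two-valued and hence does not validate the classical implication clause.
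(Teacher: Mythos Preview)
Your proposal is correct and follows the same overall architecture as the paper: induction on the $\mathsf{GCT}$-derivation, with the logical cases immediate and the work concentrated in verifying that the translated compositional axioms hold in $\mathsf{GCR}^{\emptyset}$.

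The organisational difference is minor but worth noting. You front-load the key observation that $\pole=\emptyset$ collapses $(\mathrm{Ax}_{\T})$ to the realiser-independent irrefutability condition $a\T\ulcorner A\urcorner\leftrightarrow\forall b\,\neg(b\F\ulcorner A\urcorner)$, and from this derive generalised clauses $(\mathrm{CT}_{=})'$, $(\mathrm{CT}_{\to})'$, $(\mathrm{CT}_{\forall})'$ for all of $\pred{Sent}_{\Lr}$ in one stroke; the translated $\mathsf{GCT}$-axioms are then single top-level instances. The paper instead works case by case through Lemma~\ref{lem:partial_compositionality_GCR}, using the concrete combinators $\mathsf{i}$ and $\mathsf{h}$ to move realisers around and then invoking $\pole=\emptyset$ at the end to reset the realiser to $0$. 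Your route is slightly more elementary in that it does not need the full strength of Lemma~\ref{lem:partial_compositionality_GCR}; the paper's route has the advantage of reusing machinery already in place. You are also more explicit about the syntactic bookkeeping (commutation of $\tau$ with $\deq$, $\dra$, $\dfa$, $\sub$ and preservation of sentencehood), which the paper treats as evident. Both packagings are sound and prove the same thing.
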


\begin{proof}
The proof proceeds by induction on the derivation of $A$.
For the base case, it is enough to check the axioms for the truth predicate.
For example, let us consider the axiom ($\mathrm{GCT}_{\to}$): 
\[
\forall \ulcorner A \urcorner, \ulcorner B \urcorner \in Sent_{\Lt}. \ \T\ulcorner A \to B \urcorner \leftrightarrow (\T \ulcorner A \urcorner \to \T \ulcorner B \urcorner).
\]
Thus, taking any codes $\ulcorner A \urcorner, \ulcorner B \urcorner \in Sent_{\Lt}$,
we prove in $\mathsf{GCR}^{\emptyset}$ that: 
\begin{align}
\mathcal{T}_{\mathsf{FS}}(\T\ulcorner A \to B \urcorner) \leftrightarrow \big(\mathcal{T}_{\mathsf{FS}}(\T \ulcorner A \urcorner) \to \mathcal{T}_{\mathsf{FS}}(\T\ulcorner B \urcorner)\big). \label{thm:truth-def-FS:case-to}
\end{align}
Here, $\mathcal{T}_{\mathsf{FS}}(\T\ulcorner A \to B \urcorner)$ is equivalent to $0 \T \ulcorner \mathcal{T}_{\mathsf{FS}} (A) \to \mathcal{T}_{\mathsf{FS}} (B) \urcorner$.
Similarly, we have $\mathcal{T}_{\mathsf{FS}}(\T \ulcorner A \urcorner) \leftrightarrow 0 \T \ulcorner \mathcal{T}_{\mathsf{FS}}(A) \urcorner$ and $\mathcal{T}_{\mathsf{FS}}(\T \ulcorner B \urcorner) \leftrightarrow 0 \T \ulcorner \mathcal{T}_{\mathsf{FS}}(B) \urcorner$.
\begin{itemize}
\item To get the left-to-right direction of (\ref{thm:truth-def-FS:case-to}), we assume $0 \T \ulcorner \mathcal{T}_{\mathsf{FS}} (A) \to \mathcal{T}_{\mathsf{FS}} (B) \urcorner$ and $0 \T \ulcorner \mathcal{T}_{\mathsf{FS}}(A) \urcorner$. 
Then, by Lemma~\ref{lem:partial_compositionality_GCR}, 
we have
$(\mathsf{i} \cdot \langle 0,0 \rangle) \T \ulcorner \mathcal{T}_{\mathsf{FS}}(B) \urcorner$.
But, in the presence of $\pole = \emptyset$, this is equivalent to $0 \T \ulcorner \mathcal{T}_{\mathsf{FS}}(B) \urcorner$.
\item For the converse direction, we suppose $0 \T \ulcorner \mathcal{T}_{\mathsf{FS}}(A) \urcorner \to 0 \T \ulcorner \mathcal{T}_{\mathsf{FS}}(B) \urcorner$.
Then, we have $\forall x \Big( x \T \ulcorner \mathcal{T}_{\mathsf{FS}}(A) \urcorner \to \big( (\lambda a.0) \cdot x \big) \T \ulcorner \mathcal{T}_{\mathsf{FS}}(B) \urcorner \Big)$, and hence it follows by Lemma~\ref{lem:partial_compositionality_GCR} that $\big( \mathsf{h} \cdot (\lambda a.0) \big) \T \ulcorner \mathcal{T}_{\mathsf{FS}} (A) \to \mathcal{T}_{\mathsf{FS}} (B) \urcorner$, which implies by $\pole = \emptyset$ that $0 \T \ulcorner \mathcal{T}_{\mathsf{FS}} (A) \to \mathcal{T}_{\mathsf{FS}} (B) \urcorner$.
\end{itemize}
In summary, (\ref{thm:truth-def-FS:case-to}) is obtained.
The other cases and the inductive step are similarly proved, using Lemma~\ref{lem:partial_compositionality_GCR}. \qed
\end{proof}

Before defining the Friedman--Sheard system, let us observe inconsistency results associated with $\mathsf{GCR}^{\emptyset}$.
As in the case of $\mathsf{CR}$, the reflection schema is equivalent to the assumption that the pole is empty:

\begin{lemma}[cf.~{\cite[Lemma~6]{hayashi2024compositional}}]\label{lem:reflection_empty_GCR}
Over $\mathsf{GCR}$, the following are equivalent.
\begin{enumerate}
\item The reflection schema: \( \exists x (x\mathrm{T} \ulcorner A \urcorner) \to A  \)  for every $\mathcal{L}$-sentence $A$. 
\item The axiom: \( \pole = \emptyset \). 
\end{enumerate}
\end{lemma}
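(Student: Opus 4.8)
The plan is to prove both directions of the equivalence over $\mathsf{GCR}$, mirroring the strategy used for the analogous statement about $\mathsf{CR}$ in Lemma~\ref{lem:reflection_empty_CR}. The key engine for the reverse direction will be the axiom $(\mathrm{Ax}_{\T})$, which characterises $a \T \gn{A}$ as $\forall b(b\F\gn{A}\to\langle a,b\rangle\in\pole)$, together with the observation that when the pole is empty this characterisation collapses in a way that lets us read off the object-level truth of $A$.

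First I would prove that $\pole=\emptyset$ implies the reflection schema. Fix an $\mathcal{L}$-sentence $A$ and assume $\exists x(x\T\gn{A})$; I want to derive $A$. The natural approach is to show, by induction on the structure of $A$ (using formalised induction inside $\mathsf{GCR}$ over codes of $\mathcal{L}$-sentences, so really a single internal induction rather than a metatheoretic schema), the biconditional $\exists x(x\T\gn{A})\leftrightarrow A$, restricted to $\mathcal{L}$-sentences. The atomic case is handled by Lemma~\ref{lem:truth-condition-P}: under $\pole=\emptyset$, clause~(1) makes $a\T\gn{P\dot{\vec x}}$ equivalent to $\forall y(\langle a,y\rangle\in\pole)$, which is vacuously true, so $\exists x(x\T\gn{P\dot{\vec x}})$ holds exactly when the atomic case forces $P\vec x$ to be checked against clause~(2); I would verify that the two clauses together give $\exists x(x\T\gn{P\dot{\vec x}})\leftrightarrow P\vec x$ once $\pole=\emptyset$ is in force. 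The implication and universal cases follow from the compositional behaviour of realisation supplied by Lemma~\ref{lem:partial_compositionality_GCR}: under the empty pole the combinators $\mathsf{i},\mathsf{h},\mathsf{u},\mathsf{s}$ turn the clauses into the usual Tarskian recursion (exactly as exploited in Proposition~\ref{prop:truth-def-GCT}), so that $\exists x(x\T\gn{B\to C})\leftrightarrow(\exists x(x\T\gn{B})\to\exists x(x\T\gn{C}))$ and similarly for $\forall$. Chaining the biconditional with the inductive hypothesis yields $\exists x(x\T\gn{A})\leftrightarrow A$, and in particular the reflection schema.

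For the converse, I would assume the reflection schema and derive $\pole=\emptyset$, i.e. $\neg\exists x(x\in\pole)$. The idea is to find, for a suitable false $\mathcal{L}$-sentence, a realiser manufactured from a hypothetical element of the pole, and then invoke reflection to obtain a contradiction. Concretely, take the sentence $\bot$ (that is, $\num0=\num1$); since $\mathbb{N}\not\models\num0=\num1$, the axioms $(\mathrm{GCR}_{P1})$/$(\mathrm{CR}_{=1})$ give $\forall a(a\F\gn{\bot})$, so by $(\mathrm{Ax}_{\T})$ the assertion $a\T\gn{\bot}$ reduces to $\forall b(\langle a,b\rangle\in\pole)$. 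Assuming $\exists x(x\in\pole)$, I would use the closure axiom $(\mathrm{Ax}_{\pole})$ to produce a term $s$ with $s\T\gn{\bot}$: this is precisely the content of Fact~\ref{fact:number-realisability}(2), namely that from an element of the pole one builds a realiser of any sentence via $\mathsf{k}_{\pole}$. Reflection then yields $\bot$, contradicting arithmetic; hence $\neg\exists x(x\in\pole)$, i.e. $\pole=\emptyset$.

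The main obstacle I anticipate is the inductive case of the first direction, where I must check that the equivalence $\exists x(x\T\gn{A})\leftrightarrow A$ is preserved under $\to$ and $\forall$ \emph{as a single formalised induction} rather than as an external schema. The delicate point is that $\exists x(x\T\gn{A})$ existentially quantifies the realiser, so the compositional lemmas (which manipulate named realisers via combinators) must be applied carefully: I would phrase the induction so that the existential realiser is extracted, fed through $\mathsf{i}$ or $\mathsf{u}$, and then re-existentially-quantified, relying on $\pole=\emptyset$ to discharge the side conditions $\langle a,b\rangle\in\pole$ that would otherwise obstruct the collapse. Everything else is routine once the atomic base case and this combinatorial bookkeeping are in place.
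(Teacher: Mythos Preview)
Your approach is correct and matches the (implicit) argument the paper points to via Lemma~\ref{lem:reflection_empty_CR} and Lemma~\ref{lem:CR_with_empty}. Two small corrections are worth making. First, in the atomic base case you have the clauses of Lemma~\ref{lem:truth-condition-P} reversed: under $\pole=\emptyset$ the condition $\forall y(\langle a,y\rangle\in\pole)$ appearing in clause~(1) is \emph{false}, not vacuously true, whereas it is clause~(2)'s condition $\forall y(y\in\pole\to\langle a,y\rangle\in\pole)$ that becomes vacuously true; the desired biconditional $\exists x(x\T\gn{P\dot{\vec x}})\leftrightarrow P\vec x$ still follows, so your conclusion stands. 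Second, the induction cannot be carried out as a single internal induction over codes: the target $\exists x(x\T\gn{A})\leftrightarrow A$ disquotes $A$, and this is not expressible uniformly in the object language without already having a truth predicate. The correct formulation is an \emph{external} induction on the build-up of each fixed $\mathcal{L}$-sentence $A$ (exactly as in the proof of the Tarski biconditional, Lemma~\ref{fact:tarski_biconditional}), which is perfectly adequate since the reflection principle is itself a schema. Your direction from reflection to $\pole=\emptyset$ via $\mathsf{k}_\pole$ and the instance $A=\bot$ is exactly right.
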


However, similar to the case of truth theory (Fact~\ref{fact:inconsistency_GCT}), the next proposition shows that we need to treat $\T$ (and $\F$) differently from the atomic predicates of $\mathcal{L} \cup \{ x \in \pole \}$. 
That is the reason why Lemma~\ref{lem:truth-condition-P} and Lemma~\ref{lem:reflection_empty_GCR} cannot be generalised to $\Lr$-sentences without inconsistency.

\begin{proposition}\label{inconsistency_GCR1}
Let $\mathsf{GCR}'$ be the extension of $\mathsf{GCR}$ with the following axioms:
\begin{description}
\item[$(\mathrm{GCR}_{\T1})$] $\forall \ulcorner A \urcorner \in \pred{Sent}_{\Lr}. \ \neg x \T\ulcorner A \urcorner \to \big( a \T\ulcorner \dot{x} \T(\ulcorner A \urcorner) \urcorner \leftrightarrow \forall y ( \langle a,y \rangle \in \pole ) \big)$,

\item[$(\mathrm{GCR}_{\T 2})$] $\forall \ulcorner A \urcorner \negmedspace \in \negmedspace \pred{Sent}_{\Lr}. \ x\T \ulcorner A \urcorner \negthinspace \to\negthinspace \bigl( a \T\ulcorner \dot{x}\T(\ulcorner A \urcorner) \urcorner \negthinspace \leftrightarrow \negthinspace \forall y ( y \negmedspace \in\negmedspace \pole \to \langle a,y \rangle \negmedspace\in\negmedspace \pole ) \big) .$
\end{description}
Then, $\mathsf{GCR}' \vdash \pole \neq \emptyset.$

\end{proposition}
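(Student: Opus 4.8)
The plan is to argue by contradiction: I work inside $\mathsf{GCR}'$, adjoin the hypothesis $\pole=\emptyset$, and derive $0=1$; this gives $\mathsf{GCR}'\vdash\pole\neq\emptyset$. The first step is to record how the two new axioms collapse under $\pole=\emptyset$. Since $\forall y(\langle a,y\rangle\in\pole)$ is then false while $\forall y(y\in\pole\to\langle a,y\rangle\in\pole)$ is vacuously true, $(\mathrm{GCR}_{\T1})$ and $(\mathrm{GCR}_{\T2})$ together simplify to the $\T$-atomic disquotation
\[
a\T\gn{\dot x\T(\gn A)}\leftrightarrow x\T\gn A\qquad(\ast)
\]
for every $\Lr$-sentence $A$ and all $a,x$. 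In parallel, $(\mathrm{Ax}_{\T})$ with $\pole=\emptyset$ yields $a\T\gn B\leftrightarrow\neg\exists b(b\F\gn B)$, so realisation no longer depends on the realiser; writing $\mathrm{Tr}(y)$ for $\num 0\T y$, I then read off from $(\mathrm{GCR}_{\to})$ and the equation axioms $(\mathrm{GCR}_{P1}),(\mathrm{GCR}_{P2})$ the two compositional facts $\mathrm{Tr}(\gn{A\to B})\leftrightarrow(\mathrm{Tr}(\gn A)\to\mathrm{Tr}(\gn B))$ and $\mathrm{Tr}(\gn{\num 0=\num 1})\leftrightarrow\bot$ (the latter because a false equation is refuted by everything, by $(\mathrm{GCR}_{P1})$). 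These are exactly the $\to$- and equation-fragments of the computation already performed for Proposition~\ref{prop:truth-def-GCT}, now carried out over all of $\Lr$ rather than $\mathcal L$.

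Second, I construct a liar whose outermost syntactic shape is a material implication with a $\T$-atom antecedent, so that the two facts above apply on the nose. Using the substitution term $\sub$ and the diagonal lemma, I fix $d=\gn{(\num 0\T\sub(x,x))\to\bot}$ and set $\lambda:=(\num 0\T\sub(\num d,\num d))\to\bot$, so that $\lambda$ is literally of the form $(\num 0\T s)\to\bot$ with $s:=\sub(\num d,\num d)$ a closed term satisfying $\mathsf{PA}\vdash s=\num{\gn\lambda}$. This is the one genuinely delicate point: the diagonalisation must deliver $\lambda$ as an honest $\to$-sentence with a $\T$-atomic antecedent, not merely as something $\mathsf{PA}$-provably equivalent to $\neg(\num 0\T\gn\lambda)$, because the compositional clauses decompose only according to actual syntax.

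Finally, I unwind $\mathrm{Tr}(\gn\lambda)$. By the $\to$-clause and $\mathrm{Tr}(\gn{\num 0=\num 1})\leftrightarrow\bot$,
\[
\mathrm{Tr}(\gn\lambda)\leftrightarrow\big(\mathrm{Tr}(\gn{\num 0\T s})\to\bot\big)\leftrightarrow\neg\,\mathrm{Tr}(\gn{\num 0\T s}).
\]
Term regularity (Proposition~\ref{term-regular}, via $(\mathrm{Reg})$, lifted from refutation to realisation through $(\mathrm{Ax}_{\T})$) together with $s=\num{\gn\lambda}$ gives $\mathrm{Tr}(\gn{\num 0\T s})\leftrightarrow\mathrm{Tr}(\gn{\num 0\T\gn\lambda})$, and $(\ast)$ with $a=x=\num 0$ and $A=\lambda$ rewrites the right-hand side as $\num 0\T\gn\lambda=\mathrm{Tr}(\gn\lambda)$. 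Hence $\mathrm{Tr}(\gn\lambda)\leftrightarrow\neg\,\mathrm{Tr}(\gn\lambda)$, i.e.\ $0=1$, so $\mathsf{GCR}'\vdash\pole\neq\emptyset$. I expect the main obstacle to be precisely the self-referential bookkeeping of the second step: guaranteeing that $\lambda$ presents as $(\num 0\T s)\to\bot$ and that $s$ provably evaluates to $\num{\gn\lambda}$, after which the collapse in the third step is forced. As a conceptual check this mirrors Fact~\ref{fact:inconsistency_GCT}: under $\pole=\emptyset$ the predicate $\mathrm{Tr}$ becomes a compositional, fully self-applicable truth predicate, so the contradiction is the realisability incarnation of the inconsistency of $\mathsf{GCT}+(\mathrm{GCT}_{\T})$.
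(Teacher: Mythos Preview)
Your argument is correct and follows the same conceptual line as the paper: assume \(\pole=\emptyset\), observe that the two new axioms collapse to a disquotation clause for \(\T\)-atoms, and obtain a liar contradiction. The execution differs. The paper does not build the liar by hand; instead it invokes the translation \(\mathcal T_{\mathsf{FS}}\) from Proposition~\ref{prop:truth-def-GCT} to show that under \(\pole=\emptyset\) the predicate \(0\T\tau(\cdot)\) satisfies all axioms of \(\mathsf{GCT}\), and that \((\mathrm{GCR}_{\T1})+(\mathrm{GCR}_{\T2})\) yield the translated form of \((\mathrm{GCT}_\T)\); the contradiction is then imported wholesale from Fact~\ref{fact:inconsistency_GCT}. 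Your approach is more self-contained---you extract only the \(\to\)- and equation-clauses needed, construct a syntactically explicit liar in \(\Lr\), and discharge the term-regularity issue for the diagonal term via \((\mathrm{Reg})\)---whereas the paper's route is shorter and makes the reduction to the truth-theoretic result transparent. Your care about the syntactic shape of \(\lambda\) and the use of \((\mathrm{Reg})\) to pass from \(s\) to \(\num{\gn\lambda}\) is exactly right; that bookkeeping is hidden in the paper's proof inside the appeal to the translation and Fact~\ref{fact:inconsistency_GCT}.
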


\begin{proof}
For a contradiction, we suppose 
$\pole = \emptyset$ in $\mathsf{GCR}'$.
Then, from the proof of Theorem~\ref{thm:truth-def-FS}, we can define the truth predicate of $\mathsf{GCT}$ as $\mathcal{T}_{\mathsf{FS}}(\T(x))$ within $\mathsf{GCR}' + \pole = \emptyset$.
In addition, from the axioms $(\mathrm{GCR}_{\T1})$ and $(\mathrm{GCR}_{\T2})$,
we can derive the following in $\mathsf{GCR}' + \pole = \emptyset$: 
\[ \forall \ulcorner A \urcorner \in Sent_{\Lt}. \ 0\T \ulcorner 0\T\ulcorner \mathcal{T}_{\mathsf{FS}}(A) \urcorner \urcorner \leftrightarrow 0\T\ulcorner \mathcal{T}_{\mathsf{FS}}(A) \urcorner,
\]
which is equivalent to $\mathcal{T}_{\mathsf{FS}}(\mathrm{GCT}_{\T})$.
This means that $\mathsf{GCR}' + \pole = \emptyset$ defines the truth predicate of $\mathsf{GCT} + (\mathrm{GCT}_{\T})$.
Thus, by Fact~\ref{fact:inconsistency_GCT}, we obtain a contradiction, and hence $\mathsf{GCT}' \vdash \pole \neq \emptyset$. \qed
\end{proof}

\begin{proposition}\label{inconsistency_GCR2}
The extension of $\mathsf{GCR}$ with the unrestricted reflection schema is inconsistent:
\[
\exists x ( x\T \ulcorner A \urcorner) \to A, \text{ for each $\Lr$-sentence $A$.}
\]
\end{proposition}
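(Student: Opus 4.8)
The plan is to show that the extension $\mathsf{S} := \mathsf{GCR} + \{\,\exists x(x\T\gn A)\to A : A\text{ an }\Lr\text{-sentence}\,\}$ already interprets the inconsistent truth theory $\mathsf{GCT}+(\T\mhyph\mathrm{Out})$ and is therefore itself inconsistent. The route parallels the proof of Proposition~\ref{inconsistency_GCR1}, reusing the relative truth definition $\mathcal{T}_{\mathsf{FS}}$ of Proposition~\ref{prop:truth-def-GCT}.

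First I would observe that $\mathsf{S}\vdash\pole=\emptyset$: the reflection schema of $\mathsf{S}$ contains, as a subschema, the reflection schema restricted to $\mathcal{L}$-sentences, so Lemma~\ref{lem:reflection_empty_GCR} yields $\pole=\emptyset$. In particular $\mathsf{S}$ extends $\mathsf{GCR}^{\emptyset}$, so the translation $\mathcal{T}_{\mathsf{FS}}$ and Proposition~\ref{prop:truth-def-GCT} are available in $\mathsf{S}$. A second preliminary fact, used repeatedly, is that under $\pole=\emptyset$ the realiser is irrelevant: from $(\mathrm{Ax}_{\T})$ one gets $a\T\gn B\leftrightarrow\neg\exists b(b\F\gn B)$ for every $\Lr$-sentence $B$ and every $a$, whence $0\T\gn B\leftrightarrow\exists x(x\T\gn B)$ in $\mathsf{S}$.

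Next I would extend the induction establishing Proposition~\ref{prop:truth-def-GCT} from $\mathsf{GCT}$ to $\mathsf{GCT}+(\T\mhyph\mathrm{Out})$. Since $\mathcal{T}_{\mathsf{FS}}$ commutes with the logical rules, it suffices to treat the single new base case, the axioms of $(\T\mhyph\mathrm{Out})$. For an $\Lt$-sentence $A$ we have
\[
\mathcal{T}_{\mathsf{FS}}(\T\gn A\to A) = \mathcal{T}_{\mathsf{FS}}(\T\gn A)\to\mathcal{T}_{\mathsf{FS}}(A),
\]
and, using $\mathsf{PA}\vdash\tau(\gn A)=\gn{\mathcal{T}_{\mathsf{FS}}(A)}$ together with the preliminary fact, $\mathcal{T}_{\mathsf{FS}}(\T\gn A)$ is provably equivalent to $0\T\gn{\mathcal{T}_{\mathsf{FS}}(A)}$ and hence to $\exists x(x\T\gn{\mathcal{T}_{\mathsf{FS}}(A)})$. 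Applying the reflection schema of $\mathsf{S}$ to the $\Lr$-sentence $\mathcal{T}_{\mathsf{FS}}(A)$ gives exactly $\exists x(x\T\gn{\mathcal{T}_{\mathsf{FS}}(A)})\to\mathcal{T}_{\mathsf{FS}}(A)$, so $\mathsf{S}\vdash\mathcal{T}_{\mathsf{FS}}(\T\gn A\to A)$. Thus $\mathsf{S}\vdash\mathcal{T}_{\mathsf{FS}}(B)$ for every theorem $B$ of $\mathsf{GCT}+(\T\mhyph\mathrm{Out})$. Taking $B$ to be $\bot=(0=1)$ — which $\mathsf{GCT}+(\T\mhyph\mathrm{Out})$ proves by Fact~\ref{fact:inconsistency_GCT}(2), and for which $\mathcal{T}_{\mathsf{FS}}(\bot)=\bot$ — yields $\mathsf{S}\vdash\bot$.

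The main obstacle is the verification in the third step that unrestricted reflection supplies the $(\T\mhyph\mathrm{Out})$ clause under the translation: this is exactly where both hypotheses are indispensable, since collapsing $\exists x(x\T\cdot)$ to $0\T\cdot$ requires $\pole=\emptyset$, while instantiating reflection at the $\Lr$-sentence $\mathcal{T}_{\mathsf{FS}}(A)$ (rather than merely at $\mathcal{L}$-sentences) is what goes beyond Lemma~\ref{lem:reflection_empty_GCR} and drives the paradox; everything else is the bookkeeping already carried out for Proposition~\ref{prop:truth-def-GCT}. Alternatively, one can argue directly by a Liar sentence $L$ with $L\leftrightarrow\neg\exists x(x\T\gn L)$: reflection on $L$ forces $\neg\exists x(x\T\gn L)$ and hence $L$; the compositional refutation axioms with $\pole=\emptyset$ give the negation clause $\exists x(x\T\gn{\neg L})\leftrightarrow\neg\exists x(x\T\gn L)$ and so $\exists x(x\T\gn{\neg L})$; and reflection on $\neg L$ then yields $\neg L$, contradicting $L$.
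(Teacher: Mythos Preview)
Your proposal is correct and follows essentially the same route as the paper: deduce $\pole=\emptyset$ via Lemma~\ref{lem:reflection_empty_GCR}, invoke the relative truth definition $\mathcal{T}_{\mathsf{FS}}$ of Proposition~\ref{prop:truth-def-GCT}, and observe that unrestricted reflection supplies the translated $(\T\mhyph\mathrm{Out})$ schema, thereby interpreting the inconsistent theory $\mathsf{GCT}+(\T\mhyph\mathrm{Out})$ of Fact~\ref{fact:inconsistency_GCT}. Your added remark that $0\T\gn B\leftrightarrow\exists x(x\T\gn B)$ under $\pole=\emptyset$ makes explicit a step the paper leaves implicit, and the alternative direct Liar argument you sketch is a valid shortcut not pursued in the paper.
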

\begin{proof}
	By Lemma~\ref{lem:reflection_empty_GCR}, the reflection schema implies in $\mathsf{GCR}$ that $\pole = \emptyset.$
Thus, in the same way as Proposition~\ref{inconsistency_GCR1}, the truth predicate of $\mathsf{GCT}$ is definable in $\mathsf{GCR} + \pole= \emptyset$. Furthermore, the reflection schema implies the schema:
\[ 0\T \ulcorner \mathcal{T}_{\mathsf{FS}}(A) \urcorner \to \mathcal{T}_{\mathsf{FS}}(A), \text{ for each $\Lt$-sentence $A$,}
\]
which is equivalent to:
\[ \mathcal{T}_{\mathsf{FS}}(\T \ulcorner A \urcorner \to A), \text{ for each $\Lt$-sentence $A$.}
\]
This means that $\mathsf{GCR}$ with the unrestricted reflection schema defines the truth predicate of $\mathsf{GCT} + (\T \mhyph \mathrm{Out})$.
Therefore, by Fact~\ref{fact:inconsistency_GCT}, we obtain a contradiction. \qed
\end{proof}

\subsection{Friedman--Sheard Realisability $\mathsf{FSR}$}

We now formulate counterparts of the rules $(\mathrm{NEC})$ and $(\mathrm{CONEC})$. The rule $(\mathrm{NEC})$ informally says that from the \emph{explicit} truth of $A$, i.e., $A$ itself, we can infer the \emph{formal} truth of $A$, i.e., $\T \ulcorner A \urcorner$. Based on this idea, we firstly define realisability in the form of an explicit $\Lr$-formula $n \in \lvert A \rvert.$
Then, the realisability version of $(\mathrm{NEC})$ should allow an inference from the \emph{explicit} realisability $n \in \lvert A \rvert$ to the \emph{formal} realisability $n\T \ulcorner A \urcorner.$ The rule $(\mathrm{CONEC})$ is similarly treated.

\begin{definition}[Explicit realisation]
For each term $s$ and  
$\Lr \mhyph$formula $A$,
we inductively define $\Lr$-formulas $s \in \lvert A \rvert$ and $s \in \lVert A \rVert,$
with renaming bound variables in $A$ if necessary.
\begin{itemize}
\item $s \in \lVert P \vec x \rVert  \ = \ P \vec x \to s \in \pole,$ if $P \in \mathcal{L} \cup \{ \pole \}$;
\item $s \in \lVert t \F u \rVert  \ = \  
s \F(t\dot{\in} \lVert u \rVert); $
\item $s \in \lVert t \T u \rVert  \ = \  
s \F(t\dot{\in} \lvert u \rvert); $
\item $s \in \lVert A \to B \rVert  \ = \  (s)_{0} \in \lvert A \rvert \land (s)_{1} \in \lVert B \rVert;$
\item $s \in \lVert \forall x A(x) \rVert  \ = \  (s)_1 \in \lVert A((s)_0) \rVert;$
\item $s \in \lvert A \rvert  \ = \  \forall a(a \in \lVert A \rVert \to \langle s, a \rangle \in \pole)$ for a fresh variable $a$.
\end{itemize}
Here, $x \dot{\in} \lvert y \rvert$ is a binary primitive recursive function symbol such that $\mathsf{PA}$ derives $x \dot{\in} |\ulcorner A \urcorner| = \ulcorner \dot{x} \in A \urcorner$ for any $\Lr$-sentence $A$.
The existence of such a function is ensured by the primitive recursion theorem. 
The other binary primitive recursive function symbol $x \dot{\in} \lVert y \rVert$ is similarly defined.
\end{definition}

We can easily verify the next substitution lemma:
\begin{lemma}\label{substitution_explicit_realisation} 
\begin{enumerate}
	\item For every terms $s,t$ and $\Lr$-formula $A,$ we have:
		\begin{enumerate}	
		\item $\mathsf{PA} \vdash (t\in \lVert A \rVert)[x/s] \ \leftrightarrow \ t[x/s] \in \lVert A[x/s] \rVert$,
		\item $\mathsf{PA} \vdash (t\in \lvert A \rvert)[x/s] \ \leftrightarrow \ t[x/s] \in \lvert A[x/s]) \rvert.$
		\end{enumerate}
		Here, $B[x/s]$ is a substitution of $s$ into the free occurrences of $x$ in  $B$.
		
	\item	For every terms $s,t$ and $\mathcal{L}_R$-formula $A,$ we have:
		\begin{enumerate}	
		\item $\mathsf{PA} \vdash s=t \to (x \in \lVert A(s) \rVert \to x \in \lVert A(t) \rVert).$
		\item $\mathsf{PA} \vdash s=t \to (x \in \lvert A(s) \rvert \to x \in \lvert A(t) \rvert).$
		\end{enumerate}
\end{enumerate}
\end{lemma}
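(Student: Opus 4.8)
The plan is to prove Part~1 by induction on the complexity of the $\Lr$-formula $A$, establishing the $\lVert\cdot\rVert$-clause (a) and the $\lvert\cdot\rvert$-clause (b) together, and then to obtain Part~2 as an immediate corollary of Part~1 and the equality axioms of first-order logic. It suffices to carry the induction through for (a), since (b) for a given $A$ follows directly from (a) for the same $A$: because $s'\in\lvert A\rvert$ is $\forall a(a\in\lVert A\rVert\to\langle s',a\rangle\in\pole)$ with $a$ chosen fresh (hence $a\neq x$ and $a$ not free in $s$), pushing $[x/s]$ through the quantifier and appealing to (a) for $A$ yields the equivalence with $\forall a(a\in\lVert A[x/s]\rVert\to\langle s'[x/s],a\rangle\in\pole)=s'[x/s]\in\lvert A[x/s]\rvert$.

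The point is that in almost every clause, substitution commutes \emph{literally} with the defining operation, so the asserted biconditional holds by reflexivity of $\leftrightarrow$. In the atomic cases $P\vec y$, $t\F u$, and $t\T u$, the right-hand sides are built from the relevant terms using $\to$, $\in\pole$, and the primitive recursive function symbols $\dot{\in}\lVert\cdot\rVert$ and $\dot{\in}\lvert\cdot\rvert$; since substitution distributes over function application and over the logical connectives, one checks directly that, for instance, $(s'\F(t\dot{\in}\lVert u\rVert))[x/s]$ is syntactically identical to $s'[x/s]\F(t[x/s]\dot{\in}\lVert u[x/s]\rVert)=s'[x/s]\in\lVert(t\F u)[x/s]\rVert$. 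The case $A\to B$ reduces to the induction hypotheses (b) for $A$ and (a) for $B$, combined with the term identity $(s')_i[x/s]=(s'[x/s])_i$.

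The one case requiring genuine care is the quantifier $\forall y\,A(y)$, where $s'\in\lVert\forall y\,A(y)\rVert$ is $(s')_1\in\lVert A((s')_0)\rVert$ and so already involves a substitution into $A$. Here I would invoke the stated bound-variable renaming convention so that $y\neq x$ and $y$ is not free in $s$, then apply the standard first-order substitution-commutation identity $(A[y/r])[x/s]=(A[x/s])[y/r[x/s]]$ to move the outer substitution inward, and finally appeal to the induction hypothesis (a) for the proper subformula $A$. The main obstacle is precisely this bookkeeping: ensuring the renaming discipline and the two nested substitutions line up so that both sides denote the same formula, whereas every other case is a routine syntactic identity.

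Finally, Part~2 is immediate from Part~1. Writing $A(v)$ for an $\mathcal L_R$-formula with distinguished variable $v$ (taken distinct from $x$), the formula $\varphi:=(x\in\lVert A\rVert)$ satisfies $\mathsf{PA}\vdash s=t\to(\varphi[v/s]\to\varphi[v/t])$ by the substitutivity-of-equals axiom. By Part~1(a) we have $\varphi[v/s]\leftrightarrow x\in\lVert A(s)\rVert$ and $\varphi[v/t]\leftrightarrow x\in\lVert A(t)\rVert$, using $x[v/s]=x=x[v/t]$, which yields 2(a); part 2(b) follows verbatim from Part~1(b).
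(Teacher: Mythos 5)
Your proof is correct. The paper offers no argument for this lemma (it is stated as "easily verified"), and your simultaneous induction on formula complexity — with the atomic and $\F$/$\T$ clauses reducing to syntactic identities, the quantifier clause handled via bound-variable renaming and the substitution-commutation identity, clause (b) obtained from (a) by pushing the substitution through the fresh universally quantified variable, and Part~2 then following from Part~1 plus substitutivity of equals — is precisely the routine verification the authors intend.
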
		
		
The next lemma shows that explicit realisability and formal realisability are equivalent for $\mathcal{L}$.
\begin{lemma}\label{explicit_formal_realisability}
\begin{enumerate}
	\item Let $P \vec{y} $ be an atomic formula of $\mathcal{L} \cup \{ \pole \}.$
		\begin{enumerate}
			\item $\mathsf{GCR} \vdash x \F \ulcorner P \dot{\vec{y}} \urcorner \leftrightarrow x \in \lVert			
			P\vec{y} \rVert.$
			\item $\mathsf{GCR} \vdash x \T \ulcorner P \dot{\vec{y}} \urcorner \leftrightarrow x \in \lvert 
			P \vec{y} \rvert.$
		\end{enumerate}
		
	\item For every terms $s,t$, we have:
		\begin{enumerate}
			\item $\mathsf{GCR} \vdash x \in \lvert s \F t \rvert \leftrightarrow x \T (s \dot{\in} \lVert t \rVert),$
			\item $\mathsf{GCR} \vdash x \in \lvert s \T t \rvert \leftrightarrow x \T (s \dot{\in} \lvert t \rvert).$
		\end{enumerate}
		
\end{enumerate}
\end{lemma}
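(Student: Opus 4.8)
The plan is to prove all four biconditionals by unfolding the relevant clause of the definition of explicit realisation and then appealing to the compositional axioms $(\mathrm{GCR}_{P1})$, $(\mathrm{GCR}_{P2})$ and $(\mathrm{Ax}_{\T})$ of $\mathsf{GCR}$. No induction on formulas is required: the four claims are independent local computations, and the only content beyond rewriting is a single case split in item~1(a) and a side condition needed to legitimise the use of $(\mathrm{Ax}_{\T})$ in item~2.

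First I would treat 1(a). Unfolding the atomic clause rewrites the right-hand side $x \in \lVert P\vec y \rVert$ as $P\vec y \to x \in \pole$, so the goal becomes $x \F \ulcorner P \dot{\vec y} \urcorner \leftrightarrow (P\vec y \to x \in \pole)$. I then reason in $\mathsf{GCR}$ by excluded middle on $P\vec y$. When $\neg P\vec y$, axiom $(\mathrm{GCR}_{P1})$ yields $x \F \ulcorner P \dot{\vec y} \urcorner$ while $P\vec y \to x \in \pole$ holds vacuously, so both sides are true; when $P\vec y$, axiom $(\mathrm{GCR}_{P2})$ gives $x \F \ulcorner P \dot{\vec y} \urcorner \leftrightarrow x \in \pole$ while $P\vec y \to x\in\pole$ collapses to $x \in \pole$, so both sides are equivalent to $x \in \pole$. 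For 1(b) I apply $(\mathrm{Ax}_{\T})$ — legitimate since $\ulcorner P \dot{\vec y}\urcorner$ is provably an $\Lr$-sentence — to rewrite $x \T \ulcorner P \dot{\vec y}\urcorner$ as $\forall b( b \F \ulcorner P \dot{\vec y}\urcorner \to \langle x,b\rangle \in \pole)$, and replace $b \F \ulcorner P \dot{\vec y}\urcorner$ by $b \in \lVert P\vec y\rVert$ using 1(a); up to renaming the bound variable this is exactly the defining instance of $x \in \lvert P\vec y\rvert$.

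The two parts of item~2 are then almost immediate. Unfolding the clause $x \in \lvert A \rvert = \forall a( a \in \lVert A\rVert \to \langle x,a\rangle \in \pole)$ with $A = s\F t$, and then the clause $a \in \lVert s\F t\rVert = a \F(s\dot{\in}\lVert t\rVert)$, turns the left-hand side of 2(a) into $\forall a( a \F(s\dot{\in}\lVert t\rVert) \to \langle x,a\rangle \in \pole)$; a single application of $(\mathrm{Ax}_{\T})$ to the argument $s\dot{\in}\lVert t\rVert$ identifies this with $x \T(s\dot{\in}\lVert t\rVert)$. Item~2(b) is identical, using $a \in \lVert s\T t\rVert = a \F(s\dot{\in}\lvert t\rvert)$ in place of the $\F$-clause and applying $(\mathrm{Ax}_{\T})$ to $s\dot{\in}\lvert t\rvert$.

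The one genuinely delicate point — and the step I expect to be the main obstacle — is justifying the use of $(\mathrm{Ax}_{\T})$ in item~2, whose outermost quantifier is relativised to $\pred{Sent}_{\Lr}$. I would therefore first establish in $\mathsf{PA}$ that the primitive recursive function symbols $\dot{\in}\lVert\cdot\rVert$ and $\dot{\in}\lvert\cdot\rvert$ always return codes of $\Lr$-sentences, namely $\mathsf{PA} \vdash \pred{Sent}_{\Lr}(s\dot{\in}\lVert t\rVert)$ and $\mathsf{PA} \vdash \pred{Sent}_{\Lr}(s\dot{\in}\lvert t\rvert)$ for all $s,t$. This rests on the defining equations of these symbols (fixed via the primitive recursion theorem), so that whenever $t$ codes a sentence $A$ the value $s\dot{\in}\lVert t\rVert$ codes the explicit-realisation sentence $\dot{s}\in\lVert A\rVert$, with a fixed sentential default otherwise. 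Once this totality-into-$\pred{Sent}_{\Lr}$ fact is in hand, $(\mathrm{Ax}_{\T})$ applies uniformly and the equivalences of item~2 hold for arbitrary terms $s,t$.
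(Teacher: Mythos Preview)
Your proposal is correct and follows exactly the same route as the paper: unfold the relevant explicit-realisation clause and then invoke $(\mathrm{GCR}_{P1})$, $(\mathrm{GCR}_{P2})$ and $(\mathrm{Ax}_{\T})$, with item~1(b) reduced to 1(a) via $(\mathrm{Ax}_{\T})$ and item~2 handled by two unfoldings followed by a single application of $(\mathrm{Ax}_{\T})$. The paper's proof is in fact terser and silently passes over the $\pred{Sent}_{\Lr}$ side condition you flag for item~2; your attention to it is reasonable, but the paper does not deem it worth spelling out.
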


\begin{proof}
\begin{enumerate}

	\item As the proof for the second item is similar, we consider only the first.
	So, taking any $x$ and $y$, we derive the claim in $\mathsf{GCR}$ as follows:
	\begin{align}
	x \F \ulcorner P \dot{\vec{y}} \urcorner &\Leftrightarrow P \vec{y} \to x \in \pole 	
	\tag{$\because$ $(\mathrm{GCR}_{P1})$ and $(\mathrm{GCR}_{P2})$} \\
	&\Leftrightarrow x \in \lVert P \vec{y} \rVert. \tag{$\because$ def. of $\lVert P \rVert$}
	\end{align}

	\item We consider the first item. We derive the claim in $\mathsf{GCR}$ as follows:
	\begin{align}
	x \in \lvert s \F t \rvert &\Leftrightarrow \forall a (a \in \lVert s \F t \rVert \to \langle x,a \rangle \in \pole)\tag{$\because$ def. of $\lvert \F \rvert$} \\
	&\Leftrightarrow \forall a (a \F(s \dot{\in} \lVert t \rVert) \to \langle x,a \rangle \in \pole)\tag{$\because$ def. of $\lVert \F \rVert$} \\
	&\Leftrightarrow x \T (s \dot{\in} \lVert t \rVert). \tag{$\because$ $(\mathrm{Ax}_{\T})$}
	\end{align} \qed
\end{enumerate}	 
\end{proof}

\begin{definition}[$\mathsf{FSR}$]\label{FSR}
An $\mathcal{L}_R$-theory $\mathsf{FSR}$ (Friedman--Sheard theory for realisability) consists of $\mathsf{GCR}$ with the two rules
\begin{center} \
\infer[(\mathrm{NECR})]{ s \T \ulcorner A \urcorner}{s \in \lvert A \rvert}
\ \ \ \infer[(\mathrm{CONECR})]{ s \in \lvert A \rvert}{s \T \ulcorner A \urcorner},
\end{center}
for every closed term $s$ and every $\Lr$-sentence $A.$
\end{definition}


As with $\mathsf{CR}$, we can equip $\mathsf{FSR}$ with the reflection rule ($= \mathsf{FSR}^+$) or the emptiness assumption for the pole ($= \mathsf{FSR}^{\emptyset}$). 
Their proof-theoretic properties are studied below.

\subsection{Revision Model for $\mathsf{FSR}$}\label{subsec:model_FSR}
Before the proof-theoretic consideration of $\mathsf{FSR},$ we give a \emph{revision} model of $\mathsf{FSR}^{(+)}$,
in which the evaluation of each sentence changes at each step.

Fix any pole $\pole \subseteq \mathbb{N}.$ 
For any given set $\mathbb{F} \subseteq \mathbb{N} \times \mathbb{N}$ for the interpretation of $\F$,
the interpretation $\mathbb{T} \subseteq \mathbb{N} \times \mathbb{N}$ of $\T$ can be defined from $\pole$ and $\mathbb{F}$:
\[
\mathbb{T} := \{ (n, \ulcorner A \urcorner) \in \mathbb{N} \times Sent_{\Lr} \ | \ \forall m \in \mathbb{N} ((m, \ulcorner A \urcorner) \in \mathbb{F} \Rightarrow \langle n,m \rangle \in \pole) \}.
\]
Thus, for a pole $\pole$ and a set $\mathbb{F}$, we shall express the above $\Lr$-model simply by $\langle \pole, \mathbb{F} \rangle.$

The revision operator $\Gamma_{\pole}: \mathbb{N}^2 \to \mathbb{N}^2$ from which the interpretation of \( \F \) is obtain is given by:
\[
\Gamma_{\pole}(\mathbb{F}):= \{ (n, \ulcorner A \urcorner) \in \mathbb{N} \times Sent_{\Lr} \ | \ \langle \pole, \mathbb{F} \rangle \models n \in \lVert A \rVert \}. 
\]
As with the revision hierarchy, we inductively define a series of interpretations:
\begin{align}
\Gamma_{\pole}^{0}(\mathbb{F}) &:= \mathbb{F}, \notag \\
\Gamma_{\pole}^{n+1}(\mathbb{F}) &:= \Gamma_{\pole}(\Gamma_{\pole}^{n}(\mathbb{F})). \notag
\end{align}

\begin{proposition}\label{soundness_FSR}
Assume $\mathsf{FSR} \vdash A$ for an $\mathcal{L}_R$-formula.
Then, for any pole $\pole \subseteq \mathbb{N}$ and any set $\mathbb{F} \subseteq \mathbb{N} \times \mathbb{N}$, there exists a number $n$ such that for any $m\geq n$, it holds that $\langle \pole, \Gamma_{\pole}^{m}(\mathbb{F})\rangle \models A^{\forall}$ where $A^{\forall}$ is a universal closure of $A$.
\end{proposition}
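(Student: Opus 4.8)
The plan is to induct on the length of the $\mathsf{FSR}$-derivation of $A$, attaching to every derived formula a threshold $n$ beyond which its universal closure holds in the revision models. Writing $\mathbb{F}_m := \Gamma_\pole^m(\mathbb{F})$ and letting $\mathbb{T}_m$ be the interpretation of $\T$ induced by $\pole$ and $\mathbb{F}_m$, the whole argument rests on two \emph{shift identities}, valid for every $m \geq 1$, every closed term $s$ and every $\Lr$-sentence $A$:
\[
\langle \pole, \mathbb{F}_m\rangle \models s \F \gn A \iff \langle \pole, \mathbb{F}_{m-1}\rangle \models s \in \lVert A\rVert,
\]
\[
\langle \pole, \mathbb{F}_m\rangle \models s \T \gn A \iff \langle \pole, \mathbb{F}_{m-1}\rangle \models s \in \lvert A\rvert.
\]
The first is immediate from the definition of $\Gamma_\pole$; the second follows by unfolding $\mathbb{T}_m$ from $\mathbb{F}_m$ and then applying the first, recalling that $s \in \lvert A\rvert$ abbreviates $\forall a(a \in \lVert A\rVert \to \langle s, a\rangle \in \pole)$. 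I would state and prove these two identities before anything else and then reduce every other step to them.

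First I would treat the base case, namely the axioms of $\mathsf{GCR}$ together with $\mathsf{PA}$ over $\Lr$. Since each $\langle \pole, \mathbb{F}_m\rangle$ is built on the standard model $\mathbb{N}$ with the new predicates interpreted as genuine relations, all of $\mathsf{PA}$ — including induction for $\Lr$-formulas — holds in every such model; likewise $(\mathrm{Ax}_\pole)$ holds because $\pole$ is a pole, and $(\mathrm{Ax}_\T)$ holds by the very definition of $\mathbb{T}_m$ from $\mathbb{F}_m$. For the compositional axioms I would verify they hold in $\langle \pole, \mathbb{F}_m\rangle$ for every $m \geq 1$ by rewriting both sides with the shift identities: e.g. for $(\mathrm{GCR}_{\to})$ the left side $a \F \gn{A \to B}$ becomes $\langle \pole, \mathbb{F}_{m-1}\rangle \models (a)_0 \in \lvert A\rvert \wedge (a)_1 \in \lVert B\rVert$ (using the definition of $\lVert A \to B\rVert$), while the right side $(a)_0 \T \gn A \wedge (a)_1 \F \gn B$ reduces to exactly the same statement. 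The clauses $(\mathrm{GCR}_{P1})$, $(\mathrm{GCR}_{P2})$, $(\mathrm{GCR}_{\forall})$ and $(\mathrm{Reg})$ are handled identically, the last two invoking Lemma~\ref{substitution_explicit_realisation} to match $\gn{A((\dot a)_0)}$ (resp. $\gn{A(t)}$) with $\lVert A((a)_0)\rVert$ (resp. $\lVert A(t)\rVert$). Thus every axiom receives the threshold $n = 1$.

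Next I would handle the inference steps. For the rules of classical first-order logic I would prove the stronger statement, uniform in assignments to the free variables, that if all premises hold for every $m \geq n$ then so does the conclusion for the same $m$; since each $\langle \pole, \mathbb{F}_m\rangle$ is an ordinary classical structure this is routine, the new threshold being the maximum of the premise thresholds (logical axioms take $n = 0$), and instantiating the assignment arbitrarily at the end yields $A^\forall$. The decisive step is the pair of Friedman--Sheard rules, where the shift identities do the work: if the premise $s \in \lvert A\rvert$ of $(\mathrm{NECR})$ holds in $\langle \pole, \mathbb{F}_m\rangle$ for all $m \geq n$, then by the second shift identity $\langle \pole, \mathbb{F}_m\rangle \models s \T \gn A$ holds for all $m \geq n+1$, so the conclusion gets threshold $n+1$; dually $(\mathrm{CONECR})$ turns $s \T \gn A$ of threshold $n$ into $s \in \lvert A\rvert$ of threshold $\max(n-1,0)$. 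As a derivation has finitely many lines the accumulated threshold is finite, giving the claimed $n$.

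The main obstacle I anticipate is not any single calculation but the bookkeeping at the two junctures where the revision index genuinely matters: confirming that the compositional $\mathsf{GCR}$ axioms hold precisely from stage $m = 1$ onward — their right-hand sides speak of $\F$ and $\T$ in the \emph{current} model whereas $\Gamma_\pole$ feeds them the \emph{previous} one — and confirming that $(\mathrm{NECR})$ costs exactly one revision step while $(\mathrm{CONECR})$ costs none. Both depend on stating the two shift identities with the correct offset, so getting those right is the crux. I would finally remark that this argument does \emph{not} extend to the reflection rule of $\mathsf{FSR}^+$ at a fixed nonempty pole, since for an $\mathcal L$-sentence $A$ the explicit realisability $s \in \lvert A\rvert$ need not entail $\mathbb N \models A$ unless $\pole = \emptyset$; this is precisely why the proposition is stated for $\mathsf{FSR}$.
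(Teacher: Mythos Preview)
Your proposal is correct and follows the same approach as the paper's own proof: induction on the length of the $\mathsf{FSR}$-derivation, with the $\mathsf{GCR}$ axioms verified from stage $m\geq 1$ onward and $(\mathrm{NECR})$ incrementing the threshold by one. Your explicit isolation of the two shift identities is exactly the mechanism the paper uses implicitly in its treatment of $(\mathrm{Ax}_\T)$ and $(\mathrm{NECR})$; your account is simply more detailed (the paper shows only those two cases and declares the rest similar), and your observation that $(\mathrm{Ax}_\T)$ in fact holds for all $m\geq 0$, not just $m\geq 1$, is accurate though inessential.
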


\begin{proof}
The proof is by induction on the length of the derivation, so assume $\mathsf{FSR} \vdash A$ and take any pole $\pole \subseteq \mathbb{N}$ and any set $\mathbb{F} \subseteq \mathbb{N} \times \mathbb{N}$.
We divide the cases by the last axiom or rule of the derivation.
\begin{description}
\item[$(\mathrm{Ax}_{\T})$] Suppose that $A$ is the axiom $(\mathrm{Ax}_{\T})$: 
\[
\forall \ulcorner B \urcorner \in Sent_{\Lr}. \ a \T \ulcorner B \urcorner \leftrightarrow \forall b  (b \F \ulcorner B \urcorner \to \langle a, b \rangle \in \pole) ,
\]
then for all $m \geq 1,$ we can show $\langle \Gamma_{\pole}^{m}(\mathbb{F}) \rangle \models A^{\forall}$.
In fact, taking any $m \geq 1,$ $\ulcorner B \urcorner \in Sent_{\Lr},$ and $a \in \mathbb{N},$ we have:
\begin{align}
\langle \pole, \Gamma_{\pole}^{m}(\mathbb{F}) \rangle \models a \T \ulcorner B \urcorner &\Leftrightarrow
\forall b \in \mathbb{N} \big((b, \ulcorner B \urcorner) \in \Gamma_{\pole}^{m}(\mathbb{F}) \Rightarrow \langle a,b \rangle \in \pole \big) \notag \\
&\Leftrightarrow
\langle \pole, \Gamma_{\pole}^{m}(\mathbb{F}) \rangle \models \forall b  (b \F \ulcorner B \urcorner \to \langle a, b \rangle \in \pole). \notag
\end{align}
Thus, it follows that $\langle \pole, \Gamma_{\pole}^{m}(\mathbb{F}) \rangle \models (\mathrm{Ax}_{\T})^{\forall}$.

\item[$(\mathrm{NECR})$] Suppose that $A$ is of the form $ s\T \ulcorner B \urcorner$ and is obtained from $s \in \lvert B \rvert.$
Then, by induction hypothesis we have a number $n$ such that for all $m \geq n,$ 
\begin{align}
& \langle \pole, \Gamma_{\pole}^{m}(\mathbb{F}) \rangle \models s \in \lvert B \rvert \notag \\
&\Leftrightarrow \langle \pole, \Gamma_{\pole}^{m}(\mathbb{F}) \rangle \models \forall b  (b \in \lVert B \rVert \to \langle s, b \rangle \in \pole) \notag \\
&\Leftrightarrow \langle \pole, \Gamma_{\pole}^{m+1}(\mathbb{F}) \rangle \models \forall b  (b \F \ulcorner B \urcorner \to \langle s, b \rangle \in \pole). \notag
\end{align}
Since $\langle \pole, \Gamma_{\pole}^{m+1}(\mathbb{F}) \rangle \models (\mathrm{Ax}_{\T})^{\forall}$, we also get $\langle \pole, \Gamma_{\pole}^{m+1}(\mathbb{F}) \rangle \models s\T \ulcorner B \urcorner,$ which is true for all $m \geq n.$
\end{description}
The other cases are similarly proved.
Note that the axioms of $\mathsf{GCR}$ are satisfied in $\langle \pole, \Gamma_{\pole}^{m}(\mathbb{F})\rangle$ for all $m \geq 1$. \qed
\end{proof}

Recall that the sentence $A$ in the reflection rule of $\mathsf{FSR}^+$ is restricted to $\mathcal{L}$.
Thanks to this condition, we can extend Proposition~\ref{soundness_FSR}) to that for $\mathsf{FSR}^+$:

\begin{proposition}\label{soundness_FSR+}
Proposition~\ref{soundness_FSR}  holds for $\mathsf{FSR}^+$. 
\end{proposition}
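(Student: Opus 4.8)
The plan is to re-run the induction on derivation length from the proof of Proposition~\ref{soundness_FSR}, adding a single new case for the reflection rule of Definition~\ref{defn:reflection-rule}; every other axiom and rule of $\mathsf{FSR}^+$ already occurs in $\mathsf{FSR}$ and is treated exactly as before. The groundwork I would lay first is a stability observation for the $\mathcal{L}$-fragment of the revision model. For an $\mathcal{L}$-sentence $A$, the explicit-realisation formulas $n \in \lVert A \rVert$ and $n \in \lvert A \rvert$ contain neither $\T$ nor $\F$, so they are $\mathcal{L} \cup \{\pole\}$-formulas whose truth in $\langle \pole, \Gamma_\pole^m(\mathbb{F}) \rangle$ depends only on $\pole$ (and on $\mathbb{N}$), not on $\mathbb{F}$ or $m$. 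Consequently, for every $m \geq 1$ the set $\Gamma_\pole^m(\mathbb{F})$ restricted to codes of $\mathcal{L}$-sentences is a fixed set, independent of $m$ and of $\mathbb{F}$; a routine induction on $A$ using the explicit-realisation clauses (essentially the content of Lemma~\ref{explicit_formal_realisability} together with Definition~\ref{defn:classical_number_realisability}) shows that this fixed set coincides with $\lVert A \rVert_\pole$ and that the induced $\T$-part agrees with $\lvert A \rvert_\pole$.

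For the new case, suppose the last inference derives the $\mathcal{L}$-sentence $A$ by reflection from the premise $s \T \ulcorner A \urcorner$, with $s$ a closed term. The key move is to apply the induction hypothesis to the subderivation of $s \T \ulcorner A \urcorner$ not merely for the ambient pole but specifically for the empty pole $\pole = \emptyset$: this yields some $n$ with $\langle \emptyset, \Gamma_\emptyset^m(\mathbb{F}) \rangle \models s \T \ulcorner A \urcorner$ for all $m \geq n$. By the definition of the derived interpretation $\mathbb{T}$ and the stability observation above, this says exactly that $s^{\mathbb{N}} \in \lvert A \rvert_\emptyset$. Since for $\mathcal{L}$-sentences realisability relative to the empty pole coincides with truth in the standard model, we conclude $\mathbb{N} \models A$. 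Finally, because $A$ is an $\mathcal{L}$-sentence, its value in $\langle \pole, \Gamma_\pole^m(\mathbb{F}) \rangle$ is just $\mathbb{N} \models A$ for every pole $\pole$, every $\mathbb{F}$, and every $m$; hence $\langle \pole, \Gamma_\pole^m(\mathbb{F}) \rangle \models A$ holds for all $m$, and the witness required by the statement may be taken to be $0$. This closes the induction.

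The one genuinely non-routine point — and the reason the restriction of the reflection rule to $\mathcal{L}$ is indispensable — is the instantiation of the induction hypothesis at $\pole = \emptyset$: since the hypothesis quantifies over all poles, realisability of the $\mathcal{L}$-sentence $A$ can be read off at the empty pole, where it collapses to genuine truth. For an arbitrary $\Lr$-sentence this step fails outright, as realisability at the empty pole is no longer a fixed notion of truth; this is precisely what one expects given that unrestricted reflection is inconsistent over $\mathsf{GCR}$ by Proposition~\ref{inconsistency_GCR2}. The remaining ingredient, stability of the $\mathcal{L}$-fragment of the revision sequence and its agreement with the set-theoretic realisation sets, is straightforward by induction on the $\mathcal{L}$-sentence.
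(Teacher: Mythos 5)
Your proposal is correct and follows the same overall strategy as the paper's proof: re-run the induction, treat only the reflection rule, instantiate the induction hypothesis at the empty pole, and use absoluteness of $\mathcal{L}$-sentences to transfer the conclusion back to arbitrary poles and stages. The single point of divergence is how you pass from $\langle \emptyset, \Gamma_{\emptyset}^{m}(\mathbb{F})\rangle \models s \T \ulcorner A \urcorner$ to the truth of $A$: the paper observes that this model satisfies $\mathsf{GCR} + \pole = \emptyset$ and invokes Lemma~\ref{lem:reflection_empty_GCR} to conclude that the reflection schema for $\mathcal{L}$-sentences holds in it, whereas you establish the same implication semantically, via a stability lemma identifying the $\mathcal{L}$-part of $\Gamma_{\pole}^{m}(\mathbb{F})$ (for $m \ge 1$) with the set-theoretic refutation sets of Definition~\ref{defn:classical_number_realisability} and then using the collapse of empty-pole realisability to standard truth. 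Your route is self-contained but requires proving the stability observation; the paper's route is shorter because it reuses an already-available lemma, at the cost of a slightly indirect ``provable schema holds in the model'' step. Your closing remark correctly identifies why the restriction to $\mathcal{L}$ is essential, in line with Proposition~\ref{inconsistency_GCR2}.
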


\begin{proof}
The proof is again by induction on the derivation.
Since the other cases are already treated in the proof of
Proposition~\ref{soundness_FSR}, we consider the case of the reflection rule.
Thus, we assume that an $\mathcal{L}$-sentence $A$ is derived from $s \T \ulcorner A \urcorner$.
By the induction hypothesis, for any pole $\pole \subseteq \mathbb{N}$ and any set $\mathbb{F} \subseteq \mathbb{N} \times \mathbb{N}$, there exists a number $n$ such that for all $m\geq n,$ it holds that $\langle \pole, \Gamma_{\pole}^{m}(\mathbb{F})\rangle \models s \T \ulcorner A \urcorner$. 
In particular, for the empty pole $\pole := \emptyset$ and for all $m \geq 1$, the model $\langle \emptyset,  \Gamma_{\emptyset}^{m}(\mathbb{F})\rangle$ satisfies each axiom of $\mathsf{GCR}$ and the axiom $\pole = \emptyset$.
Thus, by Lemma~\ref{lem:reflection_empty_GCR}, the reflection schema for $\mathcal{L}$-sentences is also true in the same model.
Therefore, the induction hypothesis $\langle \emptyset, \Gamma_{\emptyset}^{m}(\mathbb{F})\rangle \models s \T \ulcorner A \urcorner$ for $m \geq \max (n,1)$ implies that $\langle \emptyset, \Gamma_{\emptyset}^{m}(\mathbb{F})\rangle \models A$.
Since $A \in \mathcal{L}$, this means that $A$ is true in $\langle \pole, \Gamma_{\pole}^{m}(\mathbb{F})\rangle$  for any $\pole$, $\mathbb{F}$, and $m \geq 0$, as desired. \qed

\begin{remark}
Proposition~\ref{soundness_FSR} does not hold if the reflection rule of $\mathsf{FSR}^+$ is further extended to $\mathcal{L} \cup \{ \pole \}$-sentences.
In fact, since $\pole = \emptyset$ is realisable in $\mathsf{FSR}$ (see Corollary~\ref{cor:real_pole-empty} below), this extended reflection rule infers $\pole = \emptyset$, which is, of course, satisfied only when the pole is empty. 
\end{remark}
 
\end{proof}


\subsection{Proof-theoretic Interpretation of $\mathsf{FSR}$}\label{subsec:PTS-of-FSR}
In this subsection, we prove that $\mathsf{FSR}$ and  $\mathsf{FSR}^{\emptyset} + \mathsf{TI}({<} \varphi 20)$ are relatively interpretable in $\mathsf{PA}$ and $\mathsf{FS}$, respectively.

\begin{proposition}\label{conservativity_FSR}
$\mathsf{FSR}$ is conservative over $\mathsf{PA}.$
\end{proposition}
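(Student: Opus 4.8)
The plan is to produce a relative interpretation of $\mathsf{FSR}$ in $\mathsf{PA}$ that fixes the arithmetical vocabulary, from which conservativity is immediate. Concretely, I would interpret each of the three new atomic forms trivially, sending $x \in \pole$, $x \F y$ and $x \T y$ all to the $\mathsf{PA}$-theorem $0=0$, and leaving every $\mathcal{L}$-symbol unchanged; call this interpretation $I$. Semantically this is the ``full pole'' expansion: given any $\mathcal{M} \models \mathsf{PA}$, set $\pole^{\mathcal{M}} = M$ and $\F^{\mathcal{M}} = \T^{\mathcal{M}} = M \times M$. Since $I$ is the identity on $\mathcal{L}$, it suffices to check that $\mathsf{PA} \vdash \varphi^{I}$ for every axiom $\varphi$ of $\mathsf{GCR}$ and that the two rules are admissible under $I$; then a routine induction on $\mathsf{FSR}$-derivations gives $\mathsf{FSR} \vdash \psi \Rightarrow \mathsf{PA} \vdash \psi^{I}$, and for an $\mathcal{L}$-sentence $A$ we have $A^{I} = A$, so $\mathsf{FSR} \vdash A$ forces $\mathsf{PA} \vdash A$.

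Verifying the axioms is the bulk of the bookkeeping but presents no difficulty: under $I$ every subformula asserting membership in $\pole$ or an instance of $\F$ or $\T$ becomes provably true, so each compositional biconditional of $\mathsf{GCR}$ collapses to a provable ``true $\leftrightarrow$ true'' and each of $(\mathrm{Ax}_{\pole})$, $(\mathrm{GCR}_{P1})$ and $(\mathrm{Reg})$ becomes an implication with a provably true consequent. I would run through the clauses $(\mathrm{Ax}_{\T})$, $(\mathrm{GCR}_{P2})$, $(\mathrm{GCR}_{\to})$ and $(\mathrm{GCR}_{\forall})$ in turn, noting that the leading relativised quantifiers $\forall \gn{A} \in \pred{Sent}_{\Lr}$ translate to ordinary arithmetical quantifiers governed by the primitive recursive predicate $\pred{Sent}_{\Lr}$, so provability of each matrix for all admissible codes suffices.

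The one point genuinely worth isolating is the treatment of the rules $(\mathrm{NECR})$ and $(\mathrm{CONECR})$, which are not logical rules and hence are not automatically respected by a relative interpretation. Here the key observation is that \emph{both} conclusions are unconditionally provable under $I$: the conclusion $s \T \gn{A}$ of $(\mathrm{NECR})$ translates to $0=0$, while the conclusion $s \in \lvert A \rvert$ of $(\mathrm{CONECR})$ unfolds to $\forall a(a \in \lVert A \rVert \to \langle s,a\rangle \in \pole)$, whose consequent becomes $0=0$ under $I$, so its translation is $\mathsf{PA}$-provable irrespective of the antecedent. Consequently each rule trivially satisfies ``$\mathsf{PA} \vdash (\text{premise})^{I}$ implies $\mathsf{PA} \vdash (\text{conclusion})^{I}$'', and the induction on derivations closes. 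I expect the main conceptual obstacle to lie not in this verification but in recognising that the revision-model soundness of Proposition~\ref{soundness_FSR} cannot be used directly: being formulated over the standard model $\mathbb{N}$, it yields only soundness for true arithmetic, whereas conservativity over $\mathsf{PA}$ requires an argument valid over \emph{all} models of $\mathsf{PA}$, which the trivial interpretation $I$ supplies.
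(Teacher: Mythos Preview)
Your proposal is correct and is essentially the same as the paper's own proof: the paper defines exactly the trivial translation \( \mathcal{T}_{\mathbb{N}} \) sending \( s \in \pole \), \( s \F t \), \( s \T t \) each to \( 0 = 0 \), describes it informally as the interpretation in which ``every pair is contradictory, and every sentence is realised and refuted by every number,'' and notes that \( \mathsf{PA} \vdash \mathcal{T}_{\mathbb{N}}(s \in \lvert A \rvert) \) for all \( s, A \), which is precisely your treatment of the rules. Your additional remark on why the revision-model soundness of Proposition~\ref{soundness_FSR} does not already yield conservativity is a helpful clarification not present in the paper.
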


\begin{proof}
The proof is exactly the same as for Proposition~\ref{prop:conservativity_CR} (see \cite[Proposition~3]{hayashi2024compositional}).
We define a translation $\mathcal{T}_{\mathbb{N}} \colon \Lr \to \mathcal{L}$ such that the vocabularies of $\mathcal{L}$ are unchanged, and then we show the following:
\begin{center} 
for each $\Lr$-formula $A$, if $\mathsf{FSR} \vdash A,$ then $\mathsf{PA} \vdash \mathcal{T}_{\mathbb{N}}(A).$
\end{center}
If $A \in \mathcal{L},$ we have $\mathcal{T}_{\mathbb{N}}(A) = A$; thus, the conservativity follows.

The required translation $\mathcal{T}_{\mathbb{N}}$ is defined as follows:
\begin{itemize}
\item $\mathcal{T}_{\mathbb{N}}(s=t) \ = \ (s=t)$;
\item $\mathcal{T}_{\mathbb{N}}(s \in \pole) \ = \ \mathcal{T}_{\mathbb{N}}(s\F t) \ = \ \mathcal{T}_{\mathbb{N}}(s\T t) \ = \ (0=0);$
\item $\mathcal{T}_{\mathbb{N}}$ commutes with the logical symbols.
\end{itemize}
Roughly speaking, every pair is contradictory, and every sentence is realised and refuted by every number under this interpretation.
Note that we trivially have $\mathsf{PA} \vdash \mathcal{T}_{\mathbb{N}}(s \in \lvert A \rvert)$ for every term $s$ and formula $A$.
Consequently, we can easily verify that the translation of each axiom of $\mathsf{FSR}$ is derivable in $\mathsf{PA}.$ \qed
\end{proof}

Clearly, the translation $\mathcal{T}_{\mathbb{N}}$ does not work for $\mathsf{FSR}^{\emptyset}$.
In fact, as will be shown below, $\mathsf{FSR}^{\emptyset}$ is as strong as $\mathsf{FS}$.
To determine the proof-theoretic upper bound of $\mathsf{FSR}^{\emptyset}$, we define another translation $\mathcal{T}_{\emptyset} : \Lr \to \Lt$ as follows:
\begin{itemize}
\item $\mathcal{T}_{\emptyset}(s \in \pole) \ = \ \bot$;

\item $\mathcal{T}_{\emptyset}(s \F t) \ = \ \T (\tau (s \dot{\in} \lVert t \rVert))$;

\item $\mathcal{T}_{\emptyset}(s \T t) \ = \ \T (\tau (s \dot{\in} \lvert t \rvert))$;

\item $\mathcal{T}_{\emptyset}$ commutes with the logical symbols,
\end{itemize}
where $\tau$ is a primitive recursive representation of $\mathcal{T}_{\emptyset}$.
Thus, we have $\mathsf{PA} \vdash \tau (\ulcorner A \urcorner) = \ulcorner \mathcal{T}_{\emptyset}(A) \urcorner$ for each $A \in \Lr$.

The following is immediate by the definition of $\mathcal{T}_{\emptyset}$.
\begin{corollary}\label{interpretation_FSR-empty_FS}
$\mathsf{PA}$ formulated over $\Lt$ derives the following for each $\Lr$-formulae $A,B$:
\begin{enumerate}
\item $\forall a. \ \neg \mathcal{T}_{\emptyset}(a \in \pole)$.

\item $\mathcal{T}_{\emptyset}(a \in \lvert A \rvert) \leftrightarrow \forall b \big( \neg \mathcal{T}_{\emptyset}(b \in \lVert A \rVert) \big)$

\item $a=b \to \big( \mathcal{T}_{\emptyset}(A(a)) \to \mathcal{T}_{\emptyset}(A(b)) \big)$

\item $\mathcal{T}_{\emptyset}(a \in \lVert P \vec x \rVert) \leftrightarrow \neg P \vec x,$ for $P \in \mathcal{L}$

\item $\mathcal{T}_{\emptyset}(a \in \lVert A \to B \rVert) \leftrightarrow \big( \mathcal{T}_{\emptyset}((a)_0 \in \lvert A \rvert) \land \mathcal{T}_{\emptyset}((a)_1 \in \lVert B \rVert) \big)$

\item $\mathcal{T}_{\emptyset}(a \in \lVert \forall x A (x) \rVert) \leftrightarrow \mathcal{T}_{\emptyset}((a)_1 \in \lVert A((a)_0)\rVert)$
\end{enumerate}
\end{corollary}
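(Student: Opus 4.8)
The plan is to read all six items straight off two definitions: the inductive clauses for the explicit-realisation formulas $s \in \lVert \cdot \rVert$ and $s \in \lvert \cdot \rvert$, and the clauses defining $\mathcal{T}_{\emptyset}$. The single fact doing the real work is that $\mathcal{T}_{\emptyset}$ sends every atom $s \in \pole$ to $\bot$ and otherwise commutes with the connectives and quantifiers while fixing the $\mathcal{L}$-vocabulary. Once this is recorded, each clause of the corollary becomes a one-line computation in $\mathsf{PA}$ over $\Lt$, requiring neither the induction schema nor the axioms of $\mathsf{GCR}$.

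First I would dispatch item~(1): since $\mathcal{T}_{\emptyset}(a \in \pole) = \bot$ by definition and $\mathsf{PA} \vdash \neg\bot$, the claim is immediate, and I then use it as a lemma throughout. For item~(2) I unfold $a \in \lvert A \rvert = \forall b\,(b \in \lVert A \rVert \to \langle a,b \rangle \in \pole)$, push $\mathcal{T}_{\emptyset}$ through the quantifier and the implication, and rewrite the consequent $\mathcal{T}_{\emptyset}(\langle a,b\rangle \in \pole)$ as $\bot$ via item~(1); the result $\forall b\,(\mathcal{T}_{\emptyset}(b \in \lVert A \rVert) \to \bot)$ is literally $\forall b\,\neg\mathcal{T}_{\emptyset}(b \in \lVert A \rVert)$. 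Items~(4)--(6) are the same move applied to the remaining clauses of explicit realisation: for~(4) I unfold $a \in \lVert P\vec x\rVert = (P\vec x \to a \in \pole)$ and, since $P \in \mathcal{L}$ is fixed by $\mathcal{T}_{\emptyset}$, obtain $P\vec x \to \bot = \neg P\vec x$; for~(5) I unfold the $\to$-clause and distribute $\mathcal{T}_{\emptyset}$ over the conjunction; and~(6) is a syntactic identity of formulas, since $a \in \lVert \forall x A\rVert$ \emph{is} $(a)_1 \in \lVert A((a)_0)\rVert$, so applying $\mathcal{T}_{\emptyset}$ to both sides yields the same $\Lt$-formula.

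Item~(3) is the one clause with genuine (if small) content, and is where I expect the only real care to be needed. I would first establish, by a routine structural induction on the $\Lr$-formula $A$ carried out in the metatheory, that $\mathcal{T}_{\emptyset}$ commutes with term substitution, i.e.\ that $\mathcal{T}_{\emptyset}(A(a))$ and $\mathcal{T}_{\emptyset}(A)[x/a]$ are the same $\Lt$-formula. The only non-routine base cases are the atoms $s \F t$ and $s \T t$, whose translations bury the terms inside $\T(\tau(s \dot{\in} \lVert t \rVert))$ and $\T(\tau(s \dot{\in} \lvert t \rvert))$; but since $\dot{\in}\lVert\cdot\rVert$, $\dot{\in}\lvert\cdot\rvert$ and $\tau$ are all term-forming function symbols, ordinary substitution distributes through them and the cases go through. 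With commutation in hand, item~(3) is just the Leibniz law $a=b \to (\phi(a) \to \phi(b))$ instantiated at the $\Lt$-formula $\phi \equiv \mathcal{T}_{\emptyset}(A)$, available in first-order logic with equality.

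The main obstacle, such as it is, is purely bookkeeping: ensuring that the bound variables renamed in the definition of explicit realisation do not collide with the variable substituted in item~(3), and respecting the restriction to $P \in \mathcal{L}$ in item~(4). For $P = \pole$ the asserted right-hand side $\neg P\vec x$ is not even an $\Lt$-formula, and indeed $\mathcal{T}_{\emptyset}(a \in \lVert x \in \pole\rVert)$ collapses to $\bot \to \bot = \top$ rather than to $\neg(x\in\pole)$, so the hypothesis $P \in \mathcal{L}$ is essential. Since everything follows by unfolding the two definitions together with item~(1) and the Leibniz law, the corollary is correctly flagged as immediate.
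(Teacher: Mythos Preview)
Your proposal is correct and matches the paper's approach: the paper simply declares the corollary ``immediate by the definition of $\mathcal{T}_{\emptyset}$'' and gives no further argument, and your write-up faithfully unfolds those definitions to supply the missing details. Your remark on why item~(4) must restrict to $P \in \mathcal{L}$ is a nice addition that the paper does not make explicit.
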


\begin{proposition}\label{upper-bound_FSR-empty}
For each $\Lr$-formula $A$, if $\mathsf{FSR}^{\emptyset} + \mathsf{TI}({<} \varphi 20) \vdash A$, then $\mathsf{FS} \vdash \mathcal{T}_{\emptyset}(A)$.
\end{proposition}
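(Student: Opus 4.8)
The plan is to prove the proposition by induction on the length of the derivation in $\mathsf{FSR}^{\emptyset} + \mathsf{TI}({<}\varphi 20)$, establishing that whenever the system proves $A$ the theory $\mathsf{FS}$ proves $\mathcal{T}_{\emptyset}(A^{\forall})$ (working throughout with universal closures so that the premises of $(\mathrm{NECR})$ and $(\mathrm{CONECR})$ remain sentences). The backbone of the argument is a pair of defining identities for $\mathcal{T}_{\emptyset}$: unwinding the function symbols $\dot{\in}\lVert\cdot\rVert$, $\dot{\in}\lvert\cdot\rvert$ and the representation $\tau$ (all provably in $\mathsf{PA}$, using $\mathsf{PA}\vdash\tau(\ulcorner B\urcorner)=\ulcorner\mathcal{T}_{\emptyset}(B)\urcorner$) yields, for a closed term $s$ and $\Lr$-sentence $A$,
\[
\mathcal{T}_{\emptyset}(s\F\ulcorner A\urcorner)=\T\ulcorner\mathcal{T}_{\emptyset}(s\in\lVert A\rVert)\urcorner,\qquad \mathcal{T}_{\emptyset}(s\T\ulcorner A\urcorner)=\T\ulcorner\mathcal{T}_{\emptyset}(s\in\lvert A\rvert)\urcorner.
\]
These turn every occurrence of $\F$ or $\T$ under the translation into an application of the $\mathsf{FS}$-truth predicate to the code of the corresponding explicit realisation formula, and they immediately reduce the rules $(\mathrm{NECR})$ and $(\mathrm{CONECR})$ to the rules $(\mathrm{NEC})$ and $(\mathrm{CONEC})$ of $\mathsf{FS}$: from the induction hypothesis $\mathsf{FS}\vdash\mathcal{T}_{\emptyset}(s\in\lvert A\rvert)$ one obtains $\mathsf{FS}\vdash\T\ulcorner\mathcal{T}_{\emptyset}(s\in\lvert A\rvert)\urcorner=\mathcal{T}_{\emptyset}(s\T\ulcorner A\urcorner)$ by $(\mathrm{NEC})$, and symmetrically for $(\mathrm{CONECR})$ via $(\mathrm{CONEC})$. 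This correspondence is the conceptual heart of the proof.

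For the logical and arithmetical machinery the cases are routine: since $\mathcal{T}_{\emptyset}$ fixes $\mathcal{L}$ and commutes with $\to$, $\forall$ and $=$, it sends the logical rules and the purely arithmetical ($\mathcal{L}$) axioms of $\mathsf{PA}$ to themselves, while an instance of $\Lr$-induction is sent to the $\Lt$-induction instance for $\mathcal{T}_{\emptyset}(A)$, available in $\mathsf{FS}$ since the latter contains $\mathsf{PA}$ over $\Lt$. The axiom $\pole=\emptyset$ and $(\mathrm{Ax}_{\pole})$ are trivial because $\mathcal{T}_{\emptyset}(t\in\pole)=\bot$, so they translate to $\mathsf{PA}$-provable sentences. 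For $\mathsf{TI}({<}\varphi 20)$ each instance translates to the transfinite-induction instance for the $\Lt$-formula $\mathcal{T}_{\emptyset}(A)$, and $\mathsf{FS}$ proves all such instances by Fact~\ref{fact:FS_GCT-NEC-TI}.

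The substantive work lies in the compositional axioms of $\mathsf{GCR}$. The strategy is to rewrite the left-hand side using the identities above, expand the explicit realisation formula by its defining clause, and then push the $\mathsf{FS}$-truth predicate through the resulting $\Lt$-connective by the $\mathsf{GCT}$ axioms; this is exactly what Corollary~\ref{interpretation_FSR-empty_FS} packages at the level of explicit realisation. For instance, $(\mathrm{GCR}_{\to})$ becomes, after unwinding, $\T\ulcorner\mathcal{T}_{\emptyset}((a)_0\in\lvert A\rvert)\land\mathcal{T}_{\emptyset}((a)_1\in\lVert B\rVert)\urcorner$ on the left, which $(\mathrm{GCT}_{\to})$ splits into $\T\ulcorner\mathcal{T}_{\emptyset}((a)_0\in\lvert A\rvert)\urcorner\land\T\ulcorner\mathcal{T}_{\emptyset}((a)_1\in\lVert B\rVert)\urcorner$, matching $\mathcal{T}_{\emptyset}((a)_0\T\ulcorner A\urcorner)\land\mathcal{T}_{\emptyset}((a)_1\F\ulcorner B\urcorner)$ by the identities again; similarly $(\mathrm{Ax}_{\T})$ uses $(\mathrm{GCT}_{\forall})$, $(\mathrm{GCT}_{\to})$ and $(\mathrm{GCT}_{=})$ (the last to evaluate $\T\ulcorner\bot\urcorner\leftrightarrow\bot$) to convert $\T$ of a universally quantified implication into the nested $\forall\,\neg\T$ form of the right-hand side. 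The atomic axioms $(\mathrm{GCR}_{P1})$ and $(\mathrm{GCR}_{P2})$ reduce to the Tarski biconditional for $\mathcal{L}$-formulas (Lemma~\ref{fact:tarski_biconditional}), treating the case $P=\pole$ separately (there the hypothesis or the whole implication collapses under $\mathcal{T}_{\emptyset}(t\in\pole)=\bot$), and $(\mathrm{GCR}_{\forall})$ is immediate because the two codes involved are $\mathsf{PA}$-provably equal.

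The main obstacle is $(\mathrm{Reg})$. Its translation requires the term-regularity of the $\mathsf{FS}$-truth predicate (Lemma~\ref{term-regular-GCT}) together with an auxiliary $\mathsf{PA}$-provable lemma stating that the coding operation $\tau\circ(a\dot{\in}\lVert\cdot\rVert)$ commutes with substitution of closed terms, so that for codes $\ulcorner s\urcorner,\ulcorner t\urcorner,\ulcorner A_x\urcorner$ one may exhibit an $\Lt$-formula-code $\ulcorner C_v\urcorner\in\pred{Sent}_{\Lt}$ with $\tau(a\dot{\in}\lVert\ulcorner A(s)\urcorner\rVert)=\ulcorner C(s)\urcorner$ and $\tau(a\dot{\in}\lVert\ulcorner A(t)\urcorner\rVert)=\ulcorner C(t)\urcorner$. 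Verifying this internal, quantified-over-codes substitution-commutation identity is the most delicate bookkeeping in the argument; once it is in place, Lemma~\ref{term-regular-GCT} closes the case and the induction is complete.
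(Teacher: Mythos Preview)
Your proposal is correct and follows essentially the same approach as the paper: induction on the derivation, driven by the identities $\mathcal{T}_{\emptyset}(s\F\ulcorner A\urcorner)=\T\ulcorner\mathcal{T}_{\emptyset}(s\in\lVert A\rVert)\urcorner$ and $\mathcal{T}_{\emptyset}(s\T\ulcorner A\urcorner)=\T\ulcorner\mathcal{T}_{\emptyset}(s\in\lvert A\rvert)\urcorner$, with Corollary~\ref{interpretation_FSR-empty_FS} handling the compositional axioms and $(\mathrm{NEC})$/$(\mathrm{CONEC})$ absorbing $(\mathrm{NECR})$/$(\mathrm{CONECR})$. Your treatment of $(\mathrm{Reg})$ is in fact more explicit than the paper's, which simply lists it among the cases ``similarly treated''; the substitution-commutation lemma you isolate is exactly what is needed there and is implicitly covered by the paper's appeal to Corollary~\ref{interpretation_FSR-empty_FS} together with Lemma~\ref{term-regular-GCT}.
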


\begin{proof}
The proof is by induction on the derivation of $A$.
\begin{description}
\item[($\mathrm{Ax}_{\T}$):] We assume that $A$ is $\mathrm{Ax}_{\T}$:
\[
\forall \ulcorner B \urcorner \in Sent_{\Lr}. \ a \T \ulcorner B \urcorner \leftrightarrow \forall b ( b \F \ulcorner B \urcorner \to \langle a, b \rangle \in \pole ). 
\]
By Corollary~\ref{interpretation_FSR-empty_FS}(2), $\mathsf{FS}$ formalises the result:
\begin{align}
\forall \ulcorner B \urcorner \in Sent_{\Lr}. \ \mathrm{T}\ulcorner \mathcal{T}_{\emptyset}(a \in \lvert B \rvert) \urcorner \leftrightarrow \forall b \big( \neg \T \ulcorner \mathcal{T}_{\emptyset}(b \in \lVert B \rVert) \urcorner \big). \label{upper-bound_FSR-empty:case-AxT}
\end{align}
In addition, we have by the definition of $\mathcal{T}_{\emptyset}$ that:
\begin{itemize}
\item $\mathsf{PA} \vdash \forall \ulcorner B \urcorner \in Sent_{\Lr}. \ \T \ulcorner \mathcal{T}_{\emptyset}(a \in \lvert B \rvert) \urcorner \leftrightarrow \mathcal{T}_{\emptyset}(a \T \ulcorner B \urcorner)$, and 

\item $\mathsf{PA} \vdash \forall \ulcorner B \urcorner \in Sent_{\Lr}. \ \T \ulcorner \mathcal{T}_{\emptyset}(b \in \lVert B \rVert) \urcorner \leftrightarrow \mathcal{T}_{\emptyset}(b \F \ulcorner B \urcorner)$.
\end{itemize}
Thus, (\ref{upper-bound_FSR-empty:case-AxT}) is equivalent to $\mathcal{T}_{\emptyset}(\mathsf{Ax}_{\mathrm{T}})$, as required.

\item[$\mathrm{TI}({<} \varphi20)$:] Assume that $A$ is an instance of $\mathrm{TI}({<} \varphi20)$.
Then, $\mathcal{T}_{\emptyset}(A)$ is again an instance of $\mathrm{TI}({<} \varphi20)$ in the language $\mathcal{L}_T$.
Moreover, $\mathsf{FS}$ derives all the instances of $\mathrm{TI}({<} \varphi20)$ (\cite[Theorem~2.41]{leigh2010ordinal}).
Thus, we get $\mathsf{FS} \vdash \mathcal{T}_{\emptyset}(A)$.

\item[$\mathsf{NECR}$:] Assume that $A$ is $t \T \ulcorner B \urcorner$ and is derived from $t \in \lvert B \rvert$.
By the induction hypothesis, we have $\mathsf{FS} \vdash \mathcal{T}_{\emptyset}(t \in \lvert B \rvert)$.
Thus, by $\mathrm{NEC}$, $\mathsf{FS}$ may deduce $\T \ulcorner \mathcal{T}_{\emptyset}(t \in \lvert B \rvert) \urcorner$, which is equivalent to $\mathcal{T}_{\emptyset}(t \T \ulcorner B \urcorner)$.
\end{description}
The other cases are similarly treated with the help of Corollary~\ref{interpretation_FSR-empty_FS}. \qed
\end{proof}

Conversely, $\mathsf{FS}$ is relatively interpretable in $\mathsf{FSR}^{\emptyset} + \mathsf{TI}({<} \varphi 20)$ (see Theorem~\ref{thm:truth-def-FS}).
To prove this, we show self-realisability of $\mathsf{FSR}$ in the next section.






\section{Self Realisability of $\mathsf{FSR}$}
\label{sec:self-realisability}

In this section, we show that $\mathsf{FSR}$ and even $\mathsf{FSR}^{\emptyset}$ are \emph{self realisable}.
That is, for any theorem $A$ of $\mathsf{FSR}^{\emptyset}$, we can find a term $s$ such that $\mathsf{FSR} \vdash s \in \lvert A \rvert$ (Corollary~\ref{cor:real_pole-empty}). 


First, realisability of $\mathsf{PA}$ is an easy generalisation of the previous analysis of $\mathsf{CR}$~\cite{hayashi2024compositional}.
\begin{lemma}[cf.~{Fact~\ref{fact:number-realisability}}]\label{lem:continuation_constant_GCR}
There are numbers \( \mathsf{k}_\pi \) and \( \mathsf{k}_\pole \) such that:
\begin{enumerate}
\item 
\(
	\mathsf{GCR} \vdash \forall \gn A , \gn B \in \pred{Sent}_{\Lr}. \ \forall a( a \F \gn A \to (\mathsf{k}_{\pi} \cdot a)\T \ulcorner A \to B \urcorner );
\)
\item 
\(
	\mathsf{GCR} \vdash \forall a \big( a \in \pole  \to \forall \ulcorner A \urcorner \in \pred{Sent}_{\Lr}. \ (\mathsf{k}_{\pole}\cdot a)\T \ulcorner A \urcorner \big).
\)
\end{enumerate}
\end{lemma}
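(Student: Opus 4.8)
The plan is to construct the two numbers $\mathsf{k}_\pi$ and $\mathsf{k}_\pole$ as explicit codes of partial recursive functions and verify that they provably satisfy the required realisability conditions, mirroring the proof of Fact~\ref{fact:number-realisability} from \cite{hayashi2024compositional} but now carried out \emph{inside} $\mathsf{GCR}$. The key observation throughout is the definition of the realisation predicate via $(\mathrm{Ax}_{\T})$: to prove $(\mathsf{k}\cdot a)\T\gn C$ it suffices to take an arbitrary $b$ with $b\F\gn C$ and show $\langle \mathsf{k}\cdot a, b\rangle\in\pole$. The closure axiom $(\mathrm{Ax}_{\pole})$ is then the workhorse: whenever we can arrange $(\mathsf{k}\cdot a)\cdot b\simeq c$ for some $c\in\pole$, the axiom hands us $\langle \mathsf{k}\cdot a,b\rangle\in\pole$ directly.

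\textbf{First} I would treat item~(2), which is the simpler of the two. Fix $a\in\pole$ and an arbitrary code $\gn A\in\pred{Sent}_{\Lr}$. I want $(\mathsf{k}_\pole\cdot a)\T\gn A$, so by $(\mathrm{Ax}_{\T})$ I take any $b\F\gn A$ and must produce $\langle \mathsf{k}_\pole\cdot a,b\rangle\in\pole$. The idea is to let $\mathsf{k}_\pole$ be (a code for) the function that, on input $a$, returns a program which on any further input $b$ outputs $a$ itself; concretely $(\mathsf{k}_\pole\cdot a)\cdot b\simeq a$. Since $a\in\pole$, the converse-closure axiom $(\mathrm{Ax}_\pole)$ applied to $(\mathsf{k}_\pole\cdot a)\cdot b\simeq a$ yields $\langle \mathsf{k}_\pole\cdot a,b\rangle\in\pole$, which is exactly what is needed. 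Existence of such a $\mathsf{k}_\pole$ is guaranteed by the $s$-$m$-$n$ theorem, and the computation facts $(\mathsf{k}_\pole\cdot a)\cdot b\simeq a$ are provable in $\mathsf{PA}$ and hence in $\mathsf{GCR}$.

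\textbf{For item~(1)}, fix $\gn A,\gn B\in\pred{Sent}_{\Lr}$ and $a$ with $a\F\gn A$; I aim for $(\mathsf{k}_\pi\cdot a)\T\gn{A\to B}$. By $(\mathrm{Ax}_{\T})$ take an arbitrary $b\F\gn{A\to B}$; then by $(\mathrm{GCR}_\to)$ we have $(b)_0\T\gn A$ and $(b)_1\F\gn B$. The footnote signals that $\mathsf{k}_\pi$ encodes the CPS form of call/cc, so I would define $\mathsf{k}_\pi$ so that $(\mathsf{k}_\pi\cdot a)\cdot b\simeq \langle (b)_0, a\rangle$. Now $(b)_0\T\gn A$ means, by $(\mathrm{Ax}_{\T})$, that $(b)_0$ contradicts every refutation of $A$; in particular, since $a\F\gn A$, we get $\langle (b)_0,a\rangle\in\pole$. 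Applying $(\mathrm{Ax}_\pole)$ to $(\mathsf{k}_\pi\cdot a)\cdot b\simeq\langle (b)_0,a\rangle\in\pole$ delivers $\langle \mathsf{k}_\pi\cdot a,b\rangle\in\pole$, completing the argument.

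\textbf{The main obstacle} is bookkeeping rather than conceptual: I must ensure that all the defining equations for $\mathsf{k}_\pi$ and $\mathsf{k}_\pole$ (obtained via $s$-$m$-$n$) are \emph{provably} the right computations in $\mathsf{GCR}$, and that the applications of $(\mathrm{Ax}_\pole)$ correctly instantiate its Kleene-$\pred{T}_1$ form (this is exactly why the remark preceding the lemma insists on keeping $(\mathrm{Ax}_\pole)$ unabbreviated). Care is also needed because the quantifier ranges over $\pred{Sent}_{\Lr}$ rather than $\pred{Sent}_{\mathcal L}$, but since neither the construction of the constants nor the uses of $(\mathrm{Ax}_{\T})$, $(\mathrm{GCR}_\to)$ and $(\mathrm{Ax}_\pole)$ depend on the sentence being $\mathcal L$-atomic, the generalisation is immediate and the numbers $\mathsf{k}_\pi,\mathsf{k}_\pole$ are the same as in the $\mathsf{CR}$ case.
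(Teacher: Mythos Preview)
Your proposal is correct and follows precisely the route the paper intends: the lemma is stated without proof, merely cross-referenced to Fact~\ref{fact:number-realisability} (itself cited from \cite{hayashi2024compositional}), and your argument is exactly the formalisation of that fact inside $\mathsf{GCR}$, with the only change being that the quantifiers range over $\pred{Sent}_{\Lr}$ rather than $\pred{Sent}_{\mathcal L}$. Your choices $(\mathsf{k}_\pole\cdot a)\cdot b \simeq a$ and $(\mathsf{k}_\pi\cdot a)\cdot b \simeq \langle (b)_0, a\rangle$ and the two-step pattern ``use $(\mathrm{Ax}_\T)$ or $(\mathrm{GCR}_\to)$ to put something in $\pole$, then apply $(\mathrm{Ax}_\pole)$'' are the standard ones, and your closing remark that the constants are independent of the sentence language is the observation that makes the generalisation from $\mathsf{CR}$ to $\mathsf{GCR}$ immediate.
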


\begin{lemma}[Realisation of $\mathsf{PA}$]\label{explicit_realisability_PA}
For every $\Lr$-formula $A,$
if $\mathsf{PA} \vdash A,$ then there exists a term $s$ such that $\mathsf{GCR} \vdash s \in \lvert A \rvert.$
\end{lemma}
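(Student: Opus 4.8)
The plan is to prove this by induction on the length of a Hilbert-style derivation of $A$ in $\mathsf{PA}$ (formulated over $\Lr$), producing at each step an explicit realiser $s$ together with a $\mathsf{GCR}$-proof that $s \in \lvert A \rvert$. The guiding idea is that this is a formalisation within $\mathsf{GCR}$ of the classical number realisability of arithmetic (Fact~\ref{fact:number-realisability} and Corollary~\ref{cor:real-PA}), but now carried out for the \emph{explicit} realisation predicate $s \in \lvert A \rvert$ rather than the formal predicate $s \T \gn A$. The bridge between the two is that, by definition, $s \in \lvert A \rvert$ unfolds to $\forall a (a \in \lVert A \rVert \to \langle s,a\rangle \in \pole)$, so realising $A$ explicitly amounts to exhibiting a number that contradicts every refutation of $A$ as computed by the explicit $\lVert \cdot \rVert$ clauses.

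First I would fix a concrete proof system for first-order logic with a manageable set of axiom schemata and rules — most conveniently a combinator-style or $\{\to,\forall\}$-based calculus matching the primitive connectives of $\mathcal{L}$, so that the propositional axioms reduce to the usual $\mathsf{S}$ and $\mathsf{K}$ shapes. For each logical axiom I would produce a fixed combinator code realising it, using the closure properties already packaged in Lemmas~\ref{lem:continuation_constant_GCR} and \ref{lem:partial_compositionality_GCR}: application (the constant $\mathsf{i}$) handles implication-elimination, abstraction (the constant $\mathsf{h}$) builds realisers of implications from uniform realisers of the consequent, and $\mathsf{u},\mathsf{s}$ handle the quantifier clauses. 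The constant $\mathsf{k}_\pi$ from Lemma~\ref{lem:continuation_constant_GCR} supplies the call/cc-style realiser needed for the classical axioms (e.g.\ Peirce's law or double-negation), which is exactly where classicality enters. For \emph{modus ponens}, the induction hypothesis gives terms $s_A \in \lvert A\rvert$ and $s_{A\to B}\in\lvert A\to B\rvert$, and I would set the realiser of $B$ to be $\mathsf{i}\cdot\langle s_{A\to B},s_A\rangle$, invoking Lemma~\ref{lem:partial_compositionality_GCR}(1); for the generalisation rule I would use $\mathsf{u}$ as in Lemma~\ref{lem:partial_compositionality_GCR}(3). The non-logical axioms of $\mathsf{PA}$ split into the equational/defining axioms — which are handled by the atomic clauses $(\mathrm{GCR}_{P1})$, $(\mathrm{GCR}_{P2})$ together with the term-regularity axiom $(\mathrm{Reg})$ and the substitution Lemma~\ref{substitution_explicit_realisation} — and the induction schema, which I treat separately below.

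Throughout, the key technical point is that all quantification over codes of $\Lr$-sentences and all the defining properties of $\lvert\cdot\rvert$ and $\lVert\cdot\rVert$ are available inside $\mathsf{GCR}$ via Lemma~\ref{explicit_formal_realisability}, which lets me pass freely between $x \in \lvert P\vec y\rvert$ and $x\T\gn{P\dot{\vec y}}$ at the atomic level, and via Lemma~\ref{substitution_explicit_realisation}, which gives the substitution and term-invariance behaviour of explicit realisation needed to justify the quantifier-instantiation and equality axioms. Because the realising combinators are fixed numbers independent of the particular sentence, the realiser of the conclusion of each rule is a closed term built uniformly from those of the premises, so the induction goes through with the realiser growing only along the proof tree.

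The main obstacle is the induction schema of $\mathsf{PA}$. Here an instance is $\big(B(\num 0)\wedge\forall x (B(x)\to B(x+1))\big)\to\forall x\, B(x)$, and its realiser cannot be a single fixed combinator: one must describe, provably in $\mathsf{GCR}$, a primitive recursive function that, given a realiser of the base case and a realiser of the step, iterates the step realiser $n$ times to realise $B(\num n)$, and then packages this into a realiser of the universally quantified conclusion using the $\forall$-clause. The delicate part is that this iteration must be formalised as a $\mathsf{GCR}$-proof that quantifies over the code $\gn{B_x}$ and uses external meta-level recursion on the numeral input while internally appealing to the defining equation for $\lVert\forall x B\rVert$; I expect to obtain the requisite recursion-theoretic constant by the recursion theorem (as already used to define $\dot\in$) and then verify its realising property by an internal induction available since $\mathsf{GCR}$ extends full $\mathsf{PA}$ over $\Lr$. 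Once this schema is dispatched, the remaining cases are the routine combinator bookkeeping sketched above, and the lemma follows.
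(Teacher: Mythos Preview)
Your approach is correct and matches the paper's proof, which is a one-line reference to the induction-on-derivations argument of Theorem~\ref{formal_realisability_PA}; you have simply spelled out what that argument looks like for the explicit realisability predicate. One small caveat: Lemmas~\ref{lem:continuation_constant_GCR} and \ref{lem:partial_compositionality_GCR} are stated for the \emph{formal} predicates $\T,\F$, whereas here you need their analogues for the explicit $\in\lvert\cdot\rvert,\in\lVert\cdot\rVert$; these hold by the identical arguments (indeed already in $\mathsf{PA}+(\mathrm{Ax}_\pole)$), since the defining clauses of the explicit predicates replicate $(\mathrm{GCR}_\to)$, $(\mathrm{GCR}_\forall)$ and $(\mathrm{Ax}_\T)$ by construction.
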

\begin{proof}
By induction on the length of the derivation of $A$,
we can give the required term in the same way as the proof of Theorem~\ref{formal_realisability_PA}. \qed
\end{proof}

Next, we give a realiser for each axiom of $\mathsf{GCR}$.
\begin{lemma}[Realisation of $(\mathrm{Ax}_{\pole})$]\label{realisability_Axpole}
There exists a term $s$ such that $\mathsf{GCR} \vdash s \in \lvert (\mathrm{Ax}_{\pole}) \rvert.$
\end{lemma}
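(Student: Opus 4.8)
The plan is to exhibit an explicit term $s$ realising $(\mathrm{Ax}_{\pole})$ and verify $\mathsf{GCR}\vdash s\in\lvert(\mathrm{Ax}_{\pole})\rvert$ by unwinding the definition of explicit realisation. Recall that $(\mathrm{Ax}_{\pole})$ is $\pred T_1(a,b,c)\to(\pred U(c)\in\pole\to\langle a,b\rangle\in\pole)$ (implicitly universally closed over $a,b,c$). Since all the predicates involved other than $x\in\pole$ are arithmetical (indeed primitive recursive), the explicit refutation sets $\lVert\cdot\rVert$ unfold through the clauses for $\to$, $\forall$, and the atomic clause for $\mathcal L\cup\{\pole\}$. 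So I would first compute $a\in\lVert(\mathrm{Ax}_{\pole})\rVert$ by repeatedly applying the clause $s\in\lVert B\to C\rVert=(s)_0\in\lvert B\rvert\wedge(s)_1\in\lVert C\rVert$ and the universal-closure clause, reducing the outer implications until I reach the innermost atoms $\pred U(c)\in\pole$ and $\langle a,b\rangle\in\pole$, whose $\lVert\cdot\rVert$ and $\lvert\cdot\rvert$ are given directly by the atomic clauses $s\in\lVert P\vec x\rVert=(P\vec x\to s\in\pole)$.

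The key observation is that a refutation of $(\mathrm{Ax}_{\pole})$ decomposes into data that witnesses the hypotheses: a realiser of each arithmetical antecedent, a realiser of $\pred U(c)\in\pole$, and a refuter of the conclusion $\langle a,b\rangle\in\pole$. By Lemma~\ref{lem:truth-condition-P}, a realiser of $\pred U(c)\in\pole$ in the case $\pred U(c)\in\pole$ holds is any number $e$ with $\langle e,y\rangle\in\pole$ for all $y\in\pole$; dually, under the closure condition $(\mathrm{Ax}_{\pole})$ itself, a refuter of $\langle a,b\rangle\in\pole$ carries the information that $\langle a,b\rangle\in\pole$ fails. I would define $s$ as a small partial recursive program (via $\lambda$-notation, as in the other realisation lemmas) that, given such a refutation, extracts the witnesses $a,b,c$ together with the realiser of $\pred U(c)\in\pole$ and applies the converse-computation closure built into $(\mathrm{Ax}_{\pole})$: from $\pred T_1(a,b,c)$ and $\pred U(c)\in\pole$ one gets $\langle a,b\rangle\in\pole$ \emph{inside} $\mathsf{GCR}$, and feeding this into the supplied refuter of $\langle a,b\rangle\in\pole$ produces an element of $\pole$, as required for membership of $\lvert(\mathrm{Ax}_{\pole})\rvert$. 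This is precisely why the Remark after Definition~\ref{defn:GCR} insists on keeping $(\mathrm{Ax}_{\pole})$ in its unabbreviated $\pred T_1/\pred U$ form: the realiser must manipulate the actual Kleene $T$-predicate witness $c$.

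Concretely, after unfolding, the goal $s\in\lvert(\mathrm{Ax}_{\pole})\rvert$ becomes $\forall a\,(a\in\lVert(\mathrm{Ax}_{\pole})\rVert\to\langle s,a\rangle\in\pole)$, and the computed form of $a\in\lVert(\mathrm{Ax}_{\pole})\rVert$ packages the numerical parameters and a refuter-component for the conclusion. I would then argue in $\mathsf{GCR}$ by cases on whether the arithmetical antecedents $\pred T_1$ and $\pred U(c)\in\pole$ hold: if some antecedent fails, the atomic clause $(\mathrm{GCR}_{P1})$-style analysis (via the $s\in\lVert P\vec x\rVert$ definition) makes the corresponding realiser vacuously usable; if all hold, I invoke $(\mathrm{Ax}_{\pole})$ to obtain $\langle a,b\rangle\in\pole$ and close the pole via the converse-computation closure, so that the program's output lands in $\pole$. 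The main obstacle I anticipate is the bookkeeping of pairing/projection: ensuring the term $s$ correctly routes $(a)_0,(a)_1,\dots$ to the right sub-witnesses through several nested $\to$ and $\forall$ unfoldings, and checking that the same $\lambda$-term uniformly handles both the trivial (antecedent-false) and substantive (antecedent-true) cases. The logical content is light; the care lies in matching the combinatorics of the explicit-realisation clauses to an honest partial recursive program and confirming that $\mathsf{GCR}$ proves the resulting computation terminates in $\pole$.
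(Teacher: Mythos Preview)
Your proposal is correct and follows essentially the same approach as the paper: unfold the explicit refutation of the implication into the three components $(u)_0\in\lvert\pred T_1(a,b,c)\rvert$, $((u)_1)_0\in\lvert\pred U(c)\in\pole\rvert$, $((u)_1)_1\in\lVert\langle a,b\rangle\in\pole\rVert$, then argue by cases on the truth of $\pred T_1(a,b,c)$ and $\pred U(c)\in\pole$, invoking $(\mathrm{Ax}_\pole)$ itself in the positive case. The paper's realiser is simply $\lambda u.\,\langle(u)_0,((u)_1)_0,((u)_1)_1\rangle$, so your anticipated ``bookkeeping of pairing/projection'' is the whole content; one small wording slip is that a refuter of $\langle a,b\rangle\in\pole$ does not witness that $\langle a,b\rangle\notin\pole$, but rather lies in $\pole$ whenever $\langle a,b\rangle\in\pole$, which is exactly how you use it two lines later.
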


\begin{proof}
Taking any $a,b$ and $c$, we want to find a term that realises the following:
\begin{equation}
	\pred{T}_1(a,b,c) \to ( \pred{U}(c) \in \pole \to \langle a, b \rangle \in \pole).
	\label{eqn:KleeneT}
\end{equation}
We show that the term $f := \lambda u. \langle (u)_0, ((u)_1)_0, ((u)_1)_1\rangle$ suffices. That is, we prove
\begin{displaymath}
f \in \lvert \pred{T}_1(a,b,c) \to ( \pred{U}(c) \in \pole \to \langle a, b \rangle \in \pole) \rvert.
\end{displaymath}
Take any term $u$ which refutes \eqref{eqn:KleeneT}:
\begin{displaymath}
u \in \lVert \pred{T}_1(a,b,c) \to ( \pred{U}(c) \in \pole \to \langle a, b \rangle \in \pole) \rVert.
\end{displaymath}
We need to show $\langle f,u \rangle \in \pole.$ For this, we prove that $\langle (u)_0, ((u)_1)_0, ((u)_1)_1\rangle \in \pole.$
Then, we have $\langle f,u \rangle \in \pole$ by the axiom $(\mathrm{Ax}_{\pole})$.

By the definition of $\lVert A \rVert$, we have:
\begin{align}
&(u)_0 \in \lvert \pred{T}_1(a,b,c) \rvert \label{real_Axpole_1}, \\
&(u)_1 \in \lVert \pred{U}(c) \in \pole \to \langle a, b \rangle \in \pole \rVert \label{real_Axpole_2}.
\end{align}
Similarly, (\ref{real_Axpole_2}) is equivalent to:
 \begin{align}
&((u)_1)_0 \in \lvert \pred{U}(c) \in \pole \rvert \label{real_Axpole_3}, \\
&((u)_1)_1 \in \lVert \langle a, b \rangle \in \pole \rVert \label{real_Axpole_4}.
\end{align} 

To verify $\langle (u)_0, ((u)_1)_0, ((u)_1)_1\rangle \in \pole$, we divide the cases by whether $\pred{T}_1(a,b,c)$ and $\pred{U}(c) \in \pole$ hold or not.
\begin{itemize}
\item If $\pred{T}_1(a,b,c)$ is not true, then we have $\langle ((u)_1)_0, ((u)_1)_1 \rangle \in \lVert \pred{T}_1(a,b,c) \rVert$ and thus by (\ref{real_Axpole_1}), we also get $\langle (u)_0, ((u)_1)_0, ((u)_1)_1 \rangle \in \pole,$ as required. 

\item Thus, we can assume that $\pred{T}_1(a,b,c)$ is true. By (\ref{real_Axpole_1}) we have $\forall w (w \in \pole \to \langle (u)_0, w \rangle \in \pole).$
In addition, if $\pred{U}(c) \in \pole$ is not true, then $\langle ((u)_1)_0, ((u)_1)_1 \rangle \in \pole$ by (\ref{real_Axpole_3}), and hence
$\langle (u)_0, ((u)_1)_0, ((u)_1)_1\rangle \in \pole.$

\item Thus, we can further suppose that $\pred{U}(c) \in \pole$ is true, then we similarly have $\forall w (w \in \pole \to \langle ((u)_1)_0, w \rangle \in \pole).$ Moreover, it follows that $\langle a, b \rangle \in \pole$ by the axiom $(\mathrm{Ax}_{\pole})$.
Thus, from (\ref{real_Axpole_4}) we have $((u)_1)_1 \in \pole,$ which implies $\langle ((u)_1)_0, ((u)_1)_1 \rangle \in \pole,$ which in turn yields 
$\langle (u)_0, ((u)_1)_0, ((u)_1)_1\rangle \in \pole.$ 
\end{itemize}
Therefore, in any case, we obtain $\langle (u)_0, ((u)_1)_0, ((u)_1)_1\rangle \in \pole.$ \qed
\end{proof}

\begin{lemma}[Realisation of $(\mathrm{Ax}_{\T})$]\label{realisability_AxT}
There exists a term $s$ such that $\mathsf{GCR} \vdash s \in \lvert (\mathrm{Ax}_{\T}) \rvert.$
\end{lemma}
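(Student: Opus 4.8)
The plan is to exhibit a single closed term $s$ and verify $\mathsf{GCR}\vdash s\in\lvert(\mathrm{Ax}_{\T})\rvert$ by peeling off the explicit-realisation definition of the axiom from the outside in. Reading $(\mathrm{Ax}_{\T})$ as $\forall a\,\forall x\,(\pred{Sent}_{\Lr}(x)\to(a\T x\leftrightarrow\psi(a,x)))$ with $\psi(a,x):=\forall b\,(b\F x\to\langle a,b\rangle\in\pole)$, I would first reduce the task to producing a \emph{uniform} realiser of the matrix biconditional for an arbitrary number $a$ and an arbitrary sentence code $x=\ulcorner A\urcorner$. The outer universal quantifiers and the decidable guard are handled by the standard plumbing: since $\pred{Sent}_{\Lr}$ is a primitive recursive atomic $\mathcal{L}$-predicate, a refuter of the guarded implication either supplies a realiser of a false atomic statement, which is absorbed by the continuation constant $\mathsf{k}_{\pole}$ of Lemma~\ref{lem:continuation_constant_GCR}, or leaves us to realise the consequent, so that the work concentrates on the biconditional itself.

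The crux is that the explicit-realisation clause $s\in\lVert a\T x\rVert = s\F(a\dot{\in}\lvert x\rvert)$ was designed precisely to mirror $(\mathrm{Ax}_{\T})$. Unfolding $\lvert a\T x\rvert$ and applying $(\mathrm{Ax}_{\T})$ (equivalently, Lemma~\ref{explicit_formal_realisability}) gives
\[
\mathsf{GCR}\vdash p\in\lvert a\T x\rvert \leftrightarrow p\T(a\dot{\in}\lvert x\rvert),
\]
where $a\dot{\in}\lvert x\rvert$ is the code of $\forall c\,(c\in\lVert A\rVert\to\langle a,c\rangle\in\pole)$. I would then decompose a formal refuter $t\F(a\dot{\in}\lvert x\rvert)$ by the compositional axioms $(\mathrm{GCR}_{\forall})$, $(\mathrm{GCR}_{\to})$, $(\mathrm{GCR}_{P1})$ and $(\mathrm{GCR}_{P2})$ into a witness $(t)_0=b$, a formal realiser of $b\dot{\in}\lVert x\rVert=\ulcorner b\in\lVert A\rVert\urcorner$, and a formal refuter of $\ulcorner\langle a,b\rangle\in\pole\urcorner$. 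This decomposition matches, component for component, the explicit-realisation decomposition of $\lVert\psi(a,x)\rVert$: the slot $\lvert b\F x\rvert$ coming from $(\mathrm{GCR}_{\forall})$ and $(\mathrm{GCR}_{\to})$ on the explicit side rewrites, via Lemma~\ref{explicit_formal_realisability}(2), to $\T(b\dot{\in}\lVert x\rVert)$, and the atomic $\pole$-slot matches by Lemma~\ref{explicit_formal_realisability}(1).

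With this alignment the two implication realisers become near-identity plumbing terms. For $a\T x\to\psi(a,x)$: given a refuter $w$, I extract from $(w)_1\in\lVert\psi(a,x)\rVert$ the witness $b=((w)_1)_0$, a realiser $r$ of $b\F x$, and an explicit refuter $v$ of $\langle a,b\rangle\in\pole$; reassembling $t:=\langle b,\langle r,v\rangle\rangle$ yields $t\F(a\dot{\in}\lvert x\rvert)$ (converting the explicit data into the formal refuter via Lemmas~\ref{explicit_formal_realisability} and~\ref{substitution_explicit_realisation}), whence $(w)_0\T(a\dot{\in}\lvert x\rvert)$ and $(\mathrm{Ax}_{\T})$ deliver $\langle(w)_0,t\rangle\in\pole$. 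The converse direction is symmetric, reassembling an element of $\lVert\psi(a,x)\rVert$ out of the formal refuter $(w)_1\F(a\dot{\in}\lvert x\rvert)$. In each case the realiser computes the reassembly and outputs $\langle(w)_0,t\rangle$, and closure of $\pole$ under converse computation $(\mathrm{Ax}_{\pole})$ turns $\langle(w)_0,t\rangle\in\pole$ into $\langle s_{\rightarrow},w\rangle\in\pole$ (resp.\ $\langle s_{\leftarrow},w\rangle$). I then combine $s_{\rightarrow}$ and $s_{\leftarrow}$ into a realiser of the biconditional, reading $\leftrightarrow$ as the conjunction of its two implications with the fixed conjunction realiser, and wrap the result with the guard- and $\forall$-handling terms from the first step to obtain $s$.

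The main obstacle is not the construction of $s$ but the bookkeeping in the second step: verifying that the formal refuter structure of the code $a\dot{\in}\lvert x\rvert$ produced by $(\mathrm{GCR}_{\forall})$, $(\mathrm{GCR}_{\to})$ and the atomic clauses coincides, component for component, with the explicit-realisation structure of $\psi(a,x)$. This requires careful use of Lemma~\ref{substitution_explicit_realisation} to confirm that $b\dot{\in}\lVert x\rVert$ really denotes $\ulcorner b\in\lVert A\rVert\urcorner$ after substituting the numeral for $b$, and of Lemma~\ref{explicit_formal_realisability} to pass between the predicates $\T,\F$ and the explicit realisation of the atomic and $\F$/$\T$-subformulas. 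The single conceptual step is the appeal to $(\mathrm{Ax}_{\T})$ to identify $\lvert a\T x\rvert$ with $\{p : p\T(a\dot{\in}\lvert x\rvert)\}$; everything else is continuation plumbing of the kind already exemplified in Lemma~\ref{realisability_Axpole}.
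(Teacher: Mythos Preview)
Your proposal is correct and follows essentially the same line as the paper's proof: dispatch the decidable guard, then for each direction of the biconditional decompose the refuter data using Lemma~\ref{explicit_formal_realisability} together with $(\mathrm{GCR}_\forall)$, $(\mathrm{GCR}_\to)$ and the atomic clauses, produce a contradictory pair, and close with $(\mathrm{Ax}_\pole)$. The only minor variation is that, in the forward direction, the paper applies the derived combinators $\mathsf{s}$ and $\mathsf{i}$ of Lemma~\ref{lem:partial_compositionality_GCR} to $(w)_0\T(a\dot\in\lvert x\rvert)$ to compute a formal realiser of $\ulcorner\langle a,b\rangle\in\pole\urcorner$ and then pairs it with the atomic refuter, whereas you assemble the formal refuter $t=\langle b,r,v\rangle$ of $a\dot\in\lvert x\rvert$ directly and pair it against $(w)_0$ via $(\mathrm{Ax}_\T)$; both routings are sound and amount to the same argument.
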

\begin{proof}
Taking any $x$, we want to find a realiser for the following:
\[
\pred{Sent}_{\Lr}(x) \to  \big( a \T x \leftrightarrow \forall b  (b \F x \to \langle a, b \rangle \in \pole) \big).
\]
If $\pred{Sent}_{\Lr}(x)$ does not hold, the above formula is derivable in $\mathsf{PA}.$ 
Thus, by Lemma~\ref{explicit_realisability_PA}, we can primitive recursively give a realiser for the formula.
Therefore, taking any code $\ulcorner A \urcorner$ of an $\Lr$-sentence, it suffices to consider the following:
\[
a \T \ulcorner A \urcorner \leftrightarrow \forall b  (b \F \ulcorner A \urcorner \to \langle a, b \rangle \in \pole).
\]
As the realisability is closed under logic, we separate the formula into the following:
\begin{enumerate}
\item $a \T \ulcorner A \urcorner \to (b \F \ulcorner A \urcorner \to \langle a, b \rangle \in \pole)$ for any fixed $b$;
\item $\forall b  (b \F \ulcorner A \urcorner \to \langle a, b \rangle \in \pole) \to a \T \ulcorner A \urcorner.$
\end{enumerate}
1. Take any $u \in \lVert a \T \ulcorner A \urcorner \to (b \F \ulcorner A \urcorner \to \langle a, b \rangle \in \pole) \rVert,$ then we have:
\begin{align}
&(u)_0 \in \lvert a \T \ulcorner A \urcorner \rvert \label{real_AxT_1}, \\
&((u)_1)_0 \in \lvert b \F \ulcorner A \urcorner \rvert \label{real_AxT_2}, \\
&((u)_1)_1 \in \lVert \langle a,b \rangle \in \pole \rVert \label{real_AxT_3}.
\end{align}
Then, we want to find a procedure $f$ such that $\langle f,u \rangle \in \pole$.

From (\ref{real_AxT_1}) and (\ref{real_AxT_2}), we have:
\begin{align}
&(u)_0 \T \ulcorner\forall b (b \in \lVert A \rVert \to \langle \dot{a}, b \rangle \in \pole ) \urcorner \label{real_AxT_1'}, \\
&((u)_1)_0 \T \ulcorner \dot{b} \in \lVert A \rVert \urcorner \label{real_AxT_2'}.
\end{align}
Thus, by Lemma~\ref{lem:partial_compositionality_GCR}, we obtain: 
\begin{align}
(\mathsf{i} \cdot \langle \mathsf{s} \cdot \langle (u)_0, b \rangle, ((u)_1)_0) \rangle ) \T \ulcorner \langle \dot{a}, \dot{b} \rangle \in \pole \urcorner \label{real_AxT_4}.
\end{align}
From (\ref{real_AxT_3}) and Lemma~\ref{explicit_formal_realisability}, 
 we have:
\begin{align}
&((u)_1)_1 \F \ulcorner \langle \dot{a}, \dot{b} \rangle \in \pole \urcorner \label{real_AxT_5}.
\end{align} 
Thus by (\ref{real_AxT_4}), (\ref{real_AxT_5}), and the axiom $(\mathrm{Ax}_{\T}),$ we have 
\[ \langle \mathsf{i} \cdot \langle \mathsf{s} \cdot \langle (u)_0, b \rangle, ((u)_1)_0 \rangle , ((u)_1)_1 \rangle \in \pole. \]

Now, let $f$  be such that $f \cdot b:\simeq \lambda u. \langle \mathsf{i} \cdot \langle \mathsf{s} \cdot \langle (u)_0, b \rangle, ((u)_1)_0 \rangle, ((u)_1)_1 \rangle,$ then we can show $\langle f \cdot b, u \rangle \in \pole$ with the help of $(\mathrm{Ax}_{\pole}).$ 

2. Take any $u$ such that:
\begin{align}
&(u)_0 \in \lvert \forall b  (b \F \ulcorner A \urcorner \to \langle a, b \rangle \in \pole) \rvert \label{real_AxT_6}, \\
&(u)_1 \in \lVert a \T \ulcorner A \urcorner \rVert \label{real_AxT_7}.
\end{align}
Then, we want to find a procedure $g$ such that $\langle g,u \rangle \in \pole$.

From (\ref{real_AxT_6}), we have:
\begin{align}
&\forall b, w \big( w \in \lvert b \F \ulcorner A \urcorner \rvert \to (\mathsf{i} \cdot \langle \mathsf{s} \cdot \langle (u)_0, b\rangle, w \rangle) \in \lvert \langle a,b \rangle \in \pole \rvert \big) \label{real_AxT_8}.
\end{align}

From (\ref{real_AxT_7}) we have:
\begin{align}
&(u)_1 \F \ulcorner \forall b (b \in \lVert A \rVert \to \langle \dot{a}, b\rangle \in \pole) \urcorner \label{real_AxT_9}.
\end{align}
Thus, by the axioms $(\mathrm{GCR}_{\forall})$ and $(\mathrm{GCR}_{\to})$, it follows that:
\begin{align}
&(((u)_1)_1)_0 \T \ulcorner ((\dot{u})_1)_0 \in \lVert A \rVert \urcorner \label{real_AxT_10}, \\
&(((u)_1)_1)_1 \F \ulcorner \langle \dot{a}, ((\dot{u})_1)_0\rangle \in \pole \urcorner \label{real_AxT_11}.
\end{align}
By (\ref{real_AxT_8}) and (\ref{real_AxT_10}), we obtain by taking $b:= ((u)_1)_0$ and $w:=(((u)_1)_1)_0$:
\begin{align}
&(\mathsf{i} \cdot \langle \mathsf{s} \cdot \langle (u)_0, ((u)_1)_0 \rangle, (((u)_1)_1)_0 \rangle) \in \lvert \langle a,((u)_1)_0 \rangle \in \pole \rvert \label{real_AxT_12}.
\end{align}
By (\ref{real_AxT_12}) and Lemma~\ref{explicit_formal_realisability}, 
we also have:
\begin{align}
&(\mathsf{i} \cdot \langle \mathsf{s} \cdot \langle (u)_0, ((u)_1)_0 \rangle, (((u)_1)_1)_0 \rangle) \T \ulcorner \langle \dot{a},((\dot{u})_1)_0 \rangle \in \pole \urcorner \label{real_AxT_13}.
\end{align}
Therefore, by (\ref{real_AxT_11}) and (\ref{real_AxT_13}), we obtain a contradictory pair:
\[
\langle \mathsf{i} \cdot \langle \mathsf{s} \cdot \langle (u)_0, ((u)_1)_0 \rangle, (((u)_1)_1)_0 \rangle, (((u)_1)_1)_1 \rangle \in \pole.
\]
Thus, define $g:= \lambda u. \langle \mathsf{i} \cdot \langle \mathsf{s} \cdot \langle (u)_0, ((u)_1)_0 \rangle, (((u)_1)_1)_0 \rangle, (((u)_1)_1)_1 \rangle$. \qed
\end{proof}

\begin{lemma}[Realisation of $(\mathrm{Reg})$]\label{realisability_GCREq}
There exists a term $s$ such that $\mathsf{GCR} \vdash s \in \lvert (\mathsf{Reg}) \rvert.$
\end{lemma}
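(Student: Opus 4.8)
The plan is to mirror the structure of the proofs of Lemmas~\ref{realisability_Axpole} and~\ref{realisability_AxT}: first strip the outer quantifiers and side conditions, reducing to a core implication for fixed codes, and then exhibit an explicit realiser for that core. First I would peel off the universal quantifiers over $a$, $\gn s$, $\gn t$ and $\gn{A_x}$ together with the guards $\pred{ClTerm}$, $\pred{Var}$ and $\pred{Sent}_{\Lr}$. Exactly as in the proof of Lemma~\ref{realisability_AxT}, whenever one of these primitive recursive guards fails the matrix is $\mathsf{PA}$-provable, so Lemma~\ref{explicit_realisability_PA} supplies a realiser primitive recursively; since the guards are decidable, the final term can branch on them. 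This leaves the core task: for fixed codes $\gn s,\gn t$ of closed terms and $\gn{A_x}\in\pred{Sent}_{\Lr}$ and a value $a$, realise
\[
\pred{Eq}(\gn s, \gn t) \to (a \F \gn{A(s)} \to a \F \gn{A(t)}).
\]
Because $\pred{Eq}$ is $\Sigma^0_1$, I would first replace its matrix, rewriting the implication as $\forall w(\theta(\gn s,\gn t,w)\to(a\F\gn{A(s)}\to a\F\gn{A(t)}))$ where $\theta$ is the primitive recursive matrix of $\pred{Eq}$. This is intuitionistically equivalent to the displayed formula, and since explicit realisability is closed under logic I may realise the reformulation instead. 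The payoff is that $\theta(\gn s,\gn t,w)$ is now \emph{decidable}, making the case analysis below effective.

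The heart of the argument is the witnessing case, where some $w$ satisfies $\theta(\gn s,\gn t,w)$ and hence $\pred{Eq}(\gn s,\gn t)$ holds. Given a refuter $u$ of the inner implication $a\F\gn{A(s)}\to a\F\gn{A(t)}$, the definition of $\lVert\cdot\rVert$ yields $((u)_1)_0\in\lvert a\F\gn{A(s)}\rvert$ and $((u)_1)_1\in\lVert a\F\gn{A(t)}\rVert$, the latter unfolding by the clause for $\F$-atoms to $((u)_1)_1\F(a\dot{\in}\lVert\gn{A(t)}\rVert)$. Writing $\Phi(x)$ for the $\Lr$-formula $a\in\lVert A(x)\rVert$, whose code is $a\dot{\in}\lVert\gn{A(x)}\rVert$, I would apply the axiom $(\mathrm{Reg})$ to $\Phi$ with $\gn s$ and $\gn t$ interchanged, using the $\mathsf{PA}$-provable symmetry of $\pred{Eq}$, to obtain $((u)_1)_1\F(a\dot{\in}\lVert\gn{A(s)}\rVert)$, i.e.\ $((u)_1)_1\in\lVert a\F\gn{A(s)}\rVert$. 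The refuter of the consequent is thereby turned into a refuter of the antecedent, so by the definition of $\lvert a\F\gn{A(s)}\rvert$ we get $\langle((u)_1)_0,((u)_1)_1\rangle\in\pole$ outright. A simple rearrangement term $f$ computing $\langle((u)_1)_0,((u)_1)_1\rangle$ from $u$ then yields a contradictory pair, and $(\mathrm{Ax}_{\pole})$ promotes this to $\langle f,u\rangle\in\pole$.

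The complementary case, $\theta(\gn s,\gn t,w)$ false, is routine: since $\theta$ is decidable, a canonical refuter of the false decidable premise is available and pairs with the premise-realiser to land in $\pole$, which is precisely the arithmetical bookkeeping already packaged in Lemma~\ref{explicit_realisability_PA}. Combining the witnessing branch, the $\theta$-false branch and the guard branches produces a single closed term $s$ with $\mathsf{GCR}\vdash s\in\lvert(\mathrm{Reg})\rvert$. I expect the main obstacle to be bookkeeping rather than conceptual: forming the code $\gn{\Phi_x}$ of the auxiliary formula $\Phi(x)=a\in\lVert A(x)\rVert$ from $\gn{A_x}$ through the primitive recursive operation $\dot{\in}\lVert\cdot\rVert$ so that $(\mathrm{Reg})$ genuinely applies to it, and handling the $\Sigma^0_1$ premise $\pred{Eq}$ uniformly — which the reformulation via the decidable matrix $\theta$ is designed to secure.
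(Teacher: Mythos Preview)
Your proposal is correct and follows essentially the same route as the paper: rewrite $\pred{Eq}$ via its $\Sigma^0_1$ matrix to obtain a decidable premise, unfold the refuter of the resulting implication, apply the axiom $(\mathrm{Reg})$ to the auxiliary formula $\dot a \in \lVert A(x)\rVert$ (using symmetry of $\pred{Eq}$, which the paper leaves implicit), and conclude via $(\mathrm{Ax}_\T)$/$(\mathrm{Ax}_\pole)$. The one cosmetic difference is that the paper avoids branching by taking the single uniform realiser $f = \lambda u.\,\langle (u)_0,\,((u)_1)_0,\,((u)_1)_1\rangle$: when the atomic premise is false, $(u)_0$ already contradicts every number, and when it is true, $(u)_0$ merely propagates the inner pole element $\langle ((u)_1)_0,((u)_1)_1\rangle$ outward, so no case split in the term is needed.
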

\begin{proof}
Here, the exact form of the formula $\pred{Eq}(y,z)$ is required.
Since $\pred{Eq}(y,z)$ is $\Sigma_1$-formula, we shall rephase it in the form $\exists x \pred{Eq}'(x,y,z)$, where $\pred{Eq}'(x,y,z)$ is some atomic predicate of $\mathcal{L}$.

Take any $x \in \mathbb{N},$ any $\ulcorner s = t \urcorner, \ulcorner A(0) \urcorner \in \pred{Sent}_{\Lr},$ and any $a \in \mathbb{N}.$ Then, similar to the proof of Lemma~\ref{realisability_AxT}, it suffices to find a realiser of the following:
\[
 \pred{Eq}'(x, \ulcorner s \urcorner,\ulcorner t \urcorner) \to (a \F \ulcorner A(s) \urcorner \to a \F \ulcorner A(t) \urcorner).
\]
Thus, we take any $u$ such that:
\begin{align}
&(u)_0 \in \lvert \pred{Eq}'(x, \ulcorner s \urcorner,\ulcorner t \urcorner) \rvert \label{real_GCR_Eq_1}, \\
&((u)_1)_0 \in \lvert a \F \ulcorner A(s) \urcorner \rvert \label{real_GCR_Eq_2}, \\
&((u)_1)_1 \in \lVert a \F \ulcorner A(t) \urcorner \rVert \label{real_GCR_Eq_3}. 
\end{align} 
We will show that $f:= \lambda u. \langle (u)_0,((u)_1)_0,((u)_1)_1 \rangle$ satisfies $\langle f, u \rangle \in \pole.$
So, by the axiom $(\mathrm{Ax}_{\pole}),$ it is enough to prove $\langle (u)_0,((u)_1)_0,((u)_1)_1 \rangle \in \pole.$

If the atomic $\mathcal{L}$-formula $\pred{Eq}'(x, \ulcorner s \urcorner,\ulcorner t \urcorner)$ is false, then by (\ref{real_GCR_Eq_1}), $(u)_0$ contradics any number, that is, we have $\langle (u)_0,((u)_1)_0,((u)_1)_1 \rangle \in \pole,$ as required.
Therefore, we assume $\pred{Eq}'(x, \ulcorner s \urcorner,\ulcorner t \urcorner)$ is true.
Hence, (\ref{real_GCR_Eq_1}) implies that in order to show $\langle (u)_0,((u)_1)_0,((u)_1)_1 \rangle \in \pole,$ 
we need to prove $\langle ((u)_1)_0,((u)_1)_1 \rangle \in \pole.$

By (\ref{real_GCR_Eq_2}) and (\ref{real_GCR_Eq_3}), we respectively obtain:
\begin{align}
&((u)_1)_0 \T \ulcorner \dot{a} \in \lVert A(s) \rVert \urcorner \label{real_GCR_Eq_4}, \\
&((u)_1)_1 \F \ulcorner \dot{a} \in \lVert A(t) \rVert \urcorner \label{real_GCR_Eq_5}.
\end{align}
Since we assumed $\pred{Eq}'(x, \ulcorner s \urcorner,\ulcorner t \urcorner)$, by (\ref{real_GCR_Eq_5}) and the axiom $\mathrm{Reg},$ we obtain:
\begin{align}
&((u)_1)_1 \F \ulcorner \dot{a} \in \lVert A(s) \rVert \urcorner \label{real_GCR_Eq_6}.
\end{align} 
Therefore, from (\ref{real_GCR_Eq_4}), (\ref{real_GCR_Eq_6}), and the axiom $(\mathrm{GCR}_{\T}),$ it follows that 
\[\langle ((u)_1)_0,((u)_1)_1 \rangle \in \pole,\] which implies $\langle (u)_0,((u)_1)_0,((u)_1)_1 \rangle \in \pole,$ as desired. \qed
\end{proof}

\begin{lemma}[Realisation of $(\mathrm{GCR}_{P1})$]\label{realisability_GCRnegP}
There exists a term $s$ such that $\mathsf{GCR} \vdash s \in \lvert (\mathrm{GCR}_{P1}) \rvert.$
\end{lemma}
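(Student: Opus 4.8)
The plan is to follow the template of Lemmas~\ref{realisability_AxT} and~\ref{realisability_GCREq}. Writing the axiom in full as $\forall a, \vec x\,(\neg P \vec x \to a \F \ulcorner P \dot{\vec x} \urcorner)$ and using that explicit realisability is closed under the logical connectives (as in the proof of Lemma~\ref{realisability_AxT}, the combinators underlying Lemma~\ref{lem:partial_compositionality_GCR} assembling the universal closure from a uniform realiser of the matrix), it suffices to exhibit a realiser that is \emph{uniform} in $a$ and $\vec x$ for the matrix $\neg P \vec x \to a \F \ulcorner P \dot{\vec x} \urcorner$. So I fix $a, \vec x$, recall that $\neg P \vec x$ abbreviates $P \vec x \to \bot$ with $\bot$ the false equation $0 = 1$, and take an arbitrary refuter $u$ of the matrix. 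By the clause for $\lVert \cdot \to \cdot \rVert$ this gives $(u)_0 \in \lvert \neg P \vec x \rvert$ and $(u)_1 \in \lVert a \F \ulcorner P \dot{\vec x} \urcorner \rVert$.

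The first step is to decode $(u)_1$. Using the clause $s \in \lVert t \F u' \rVert = s \F (t \dot{\in} \lVert u' \rVert)$ together with the $\mathsf{PA}$-provable identity $a \dot{\in} \lVert \ulcorner P \dot{\vec x} \urcorner \rVert = \ulcorner P \dot{\vec x} \to \dot a \in \pole \urcorner$ (which follows from the defining property of $\dot{\in} \lVert \cdot \rVert$ and the clause $s \in \lVert P \vec x \rVert = (P \vec x \to s \in \pole)$), I obtain $(u)_1 \F \ulcorner P \dot{\vec x} \to \dot a \in \pole \urcorner$. The compositional axiom $(\mathrm{GCR}_{\to})$ then splits this into $((u)_1)_0 \T \ulcorner P \dot{\vec x} \urcorner$ and $((u)_1)_1 \F \ulcorner \dot a \in \pole \urcorner$. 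Since $P$ is atomic, the equivalence of formal and explicit realisability for atomic predicates (Lemma~\ref{explicit_formal_realisability}(1)(b)) converts the first of these into $((u)_1)_0 \in \lvert P \vec x \rvert$.

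The decisive observation is that $\lVert \bot \rVert$ is provably trivial: as $0 = 1$ is false, the clause $s \in \lVert 0 = 1 \rVert = (0 = 1 \to s \in \pole)$ yields $s \in \lVert \bot \rVert$ for every $s$. Consequently membership $c \in \lVert \neg P \vec x \rVert$ reduces to $(c)_0 \in \lvert P \vec x \rvert$, so the pair $c := \langle ((u)_1)_0, 0 \rangle$ lies in $\lVert \neg P \vec x \rVert$. Unfolding $(u)_0 \in \lvert \neg P \vec x \rvert$ against this $c$ gives $\langle (u)_0, \langle ((u)_1)_0, 0 \rangle \rangle \in \pole$ outright. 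Hence the term $f := \lambda u. \langle (u)_0, \langle ((u)_1)_0, 0 \rangle \rangle$ satisfies $f \cdot u \in \pole$, and therefore $\langle f, u \rangle \in \pole$ by $(\mathrm{Ax}_{\pole})$, which is exactly what realising the matrix requires. I expect the only real obstacle to be the syntactic bookkeeping around the operator $\dot{\in} \lVert \cdot \rVert$ and the nested projections; conceptually the argument is shorter than Lemma~\ref{realisability_GCREq}, since here no case distinction on the truth of $P \vec x$ is needed---a realiser of $\neg P \vec x$ fed a realiser of $P \vec x$ already lands in the pole.
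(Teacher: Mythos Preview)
Your proposal is correct and follows essentially the same route as the paper: decode $(u)_1$ via $(\mathrm{GCR}_{\to})$ and Lemma~\ref{explicit_formal_realisability} to extract $((u)_1)_0 \in \lvert P\vec x\rvert$, then feed a refuter of $\neg P\vec x$ to $(u)_0$ using triviality of $\lVert\bot\rVert$. The only cosmetic difference is that the paper reuses $(u)_1$ itself as the refuter (since $((u)_1)_1 \in \lVert\bot\rVert$ already), obtaining the slightly simpler realiser $f := \lambda u.\langle (u)_0,(u)_1\rangle$ in place of your $\lambda u.\langle (u)_0,\langle ((u)_1)_0,0\rangle\rangle$.
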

\begin{proof}
Take any $u$ such that:
\begin{align}
&(u)_0 \in \lvert \neg P \vec{x} \rvert \label{real_GCR_negP_1}, \\
&(u)_1 \in \lVert a \F \ulcorner P \dot{\vec{x}} \urcorner \rVert \label{real_GCR_negP_2}.
\end{align} 
In order to show that $f:=\lambda u. \langle (u)_0, (u)_1 \rangle$ satisfies $\langle f,u \rangle\in \pole,$
we prove that $\langle (u)_0, (u)_1 \rangle \in \pole$.
Statement 
(\ref{real_GCR_negP_1}) is equivalent to:
\begin{align}
\forall w \Big( (w)_0 \in \lvert P \vec{x} \rvert \to \big( (w)_1 \in \lVert \bot \rVert \to \langle (u)_0,w \rangle \in \pole \big) \Big) \label{real_GCR_negP_3},
\end{align}
where recall  $\neg A$ is defined as $ A \to \bot$ and $\bot$ is some false equation. 
From (\ref{real_GCR_negP_2}), Lemma~\ref{explicit_formal_realisability}, and the axiom $(\mathrm{GCR}_{\to}),$ we deduce that
\begin{align}
(u)_1 \in \lVert a \F \ulcorner P \dot{\vec{x}} \urcorner \rVert &\Leftrightarrow (u)_1 \F \ulcorner P \dot{\vec{x}} \to \dot{a} \in \pole \urcorner \notag \\
&\Leftrightarrow ((u)_1)_0 \T \ulcorner P \dot{\vec{x}} \urcorner \land ((u)_1)_1 \F \ulcorner \dot{a} \in \pole \urcorner \notag \\
&\Rightarrow ((u)_1)_0 \in \lvert P \vec{x} \rvert \land ((u)_1)_1 \in \lVert \bot \rVert \label{real_GCR_negP_4}.
\end{align} 
Thus, by (\ref{real_GCR_negP_3}) and (\ref{real_GCR_negP_4}), we have $\langle (u)_0, (u)_1 \rangle \in \pole.$ \qed
\end{proof}

\begin{lemma}[Realisation of $(\mathrm{GCR}_{P2})$]\label{realisability_GCRP}
There exists a term $s$ such that $\mathsf{GCR} \vdash s \in \lvert (\mathrm{GCR}_{P2}) \rvert.$
\end{lemma}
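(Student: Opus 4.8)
The plan is to follow the template of the preceding realisation lemmas, most closely the biconditional argument of Lemma~\ref{realisability_AxT} and the atomic case of Lemma~\ref{realisability_GCRnegP}. Writing out the universal closure, I fix $\vec{x}$ and $a$ (the leading quantifiers being absorbed at the end by the combinator $\mathsf{u}$ of Lemma~\ref{lem:partial_compositionality_GCR} witnessing closure of realisability under $\forall$) and aim to realise
\[
P \vec{x} \to \big( a \F\ulcorner P \dot{\vec{x}} \urcorner \leftrightarrow a \in \pole \big).
\]
Since realisability is closed under the propositional connectives, it suffices to realise the two implications
\begin{enumerate}
\item $P \vec{x} \to ( a \F\ulcorner P \dot{\vec{x}} \urcorner \to a \in \pole )$ and
\item $P \vec{x} \to ( a \in \pole \to a \F\ulcorner P \dot{\vec{x}} \urcorner )$,
\end{enumerate}
and then to combine the two realisers. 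The one computational fact underlying both directions is the $\mathsf{PA}$-provable identity $a \dot{\in} \lVert \ulcorner P \dot{\vec{x}} \urcorner \rVert = \ulcorner P \dot{\vec{x}} \to \dot{a} \in \pole \urcorner$, which simply records that the explicit refutation set of an atomic $P \dot{\vec{x}}$ unfolds to the implication $P \dot{\vec{x}} \to \dot{a} \in \pole$; this is what connects the present lemma to $(\mathrm{GCR}_{\to})$ and to Lemma~\ref{explicit_formal_realisability}.

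For item~(1) I take a refutation $u$ and, as in Lemma~\ref{realisability_AxT}, read off $(u)_0 \in \lvert P \vec{x} \rvert$, $((u)_1)_0 \in \lvert a \F\ulcorner P \dot{\vec{x}} \urcorner \rvert$ and $((u)_1)_1 \in \lVert a \in \pole \rVert$, the last unfolding to $a \in \pole \to ((u)_1)_1 \in \pole$. By Lemma~\ref{explicit_formal_realisability} (together with the identity above) the second component gives $((u)_1)_0 \T \ulcorner P \dot{\vec{x}} \to \dot{a} \in \pole \urcorner$, while the first gives $(u)_0 \T \ulcorner P \dot{\vec{x}} \urcorner$. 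A single application of the modus-ponens combinator $\mathsf{i}$ of Lemma~\ref{lem:partial_compositionality_GCR} then yields $\mathsf{i} \cdot \langle ((u)_1)_0, (u)_0 \rangle \T \ulcorner \dot{a} \in \pole \urcorner$, i.e.\ $\mathsf{i} \cdot \langle ((u)_1)_0, (u)_0 \rangle \in \lvert a \in \pole \rvert$ by Lemma~\ref{explicit_formal_realisability} again. Pairing this realiser against the refutation $((u)_1)_1$ produces $\langle \mathsf{i} \cdot \langle ((u)_1)_0, (u)_0 \rangle, ((u)_1)_1 \rangle \in \pole$, so $f := \lambda u. \langle \mathsf{i} \cdot \langle ((u)_1)_0, (u)_0 \rangle, ((u)_1)_1 \rangle$ works, with $\langle f, u \rangle \in \pole$ following from $(\mathrm{Ax}_{\pole})$.

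For item~(2) I again take a refutation $u$, giving $((u)_1)_0 \in \lvert a \in \pole \rvert$ and $((u)_1)_1 \in \lVert a \F\ulcorner P \dot{\vec{x}} \urcorner \rVert$. By the definition of explicit refutation of $\F$ and the identity above, $((u)_1)_1 \F \ulcorner P \dot{\vec{x}} \to \dot{a} \in \pole \urcorner$, so $(\mathrm{GCR}_{\to})$ yields in particular $(((u)_1)_1)_1 \F \ulcorner \dot{a} \in \pole \urcorner$, which by Lemma~\ref{explicit_formal_realisability} is $(((u)_1)_1)_1 \in \lVert a \in \pole \rVert$. Feeding this refutation into $((u)_1)_0 \in \lvert a \in \pole \rvert$ gives the contradictory pair $\langle ((u)_1)_0, (((u)_1)_1)_1 \rangle \in \pole$; hence $g := \lambda u. \langle ((u)_1)_0, (((u)_1)_1)_1 \rangle$ realises item~(2) via $(\mathrm{Ax}_{\pole})$ (note that this direction uses neither the hypothesis $P \vec{x}$ nor $(u)_0$). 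Combining $f$ and $g$ through the combinators witnessing closure of realisability under the propositional connectives and $\forall$ yields the required term $s$ with $\mathsf{GCR} \vdash s \in \lvert (\mathrm{GCR}_{P2}) \rvert$.

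The argument is routine granted the explicit/formal dictionary of Lemma~\ref{explicit_formal_realisability}; the only point needing care is the bookkeeping of the identity $a \dot{\in} \lVert \ulcorner P \dot{\vec{x}} \urcorner \rVert = \ulcorner P \dot{\vec{x}} \to \dot{a} \in \pole \urcorner$, since it is precisely the unfolding of $\lVert P \vec{x} \rVert$ as the implication $P \vec{x} \to {\cdot} \in \pole$ that lets the forward direction be discharged by one modus ponens against the realiser of $P \vec{x}$ and the backward direction by one application of $(\mathrm{GCR}_{\to})$.
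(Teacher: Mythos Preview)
Your proof is correct and follows the same template as the paper's argument for $(\mathrm{GCR}_{P1})$, to which the paper simply refers with ``The proof is similar to for $(\mathrm{GCR}_{P1})$.'' Your splitting of the biconditional into the two implications and the use of the identity $a \dot{\in} \lVert \ulcorner P \dot{\vec{x}} \urcorner \rVert = \ulcorner P \dot{\vec{x}} \to \dot{a} \in \pole \urcorner$ together with Lemma~\ref{explicit_formal_realisability} and $(\mathrm{GCR}_{\to})$ is exactly the intended route; the only minor variation is that in direction~(1) you invoke the modus-ponens combinator~$\mathsf{i}$ at the $\T$-level, whereas the paper's style for $(\mathrm{GCR}_{P1})$ tends to pair components directly, but this is an inessential difference.
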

\begin{proof}
The proof is similar to for $(\mathrm{GCR}_{P1})$. \qed
\end{proof}

\begin{lemma}[Realisation of $(\mathrm{GCR}_{\to})$]\label{realisability_GCRto}
For any number $a$ and any code $\ulcorner A\to B \urcorner$ of an $\Lr$-sentence, all the following are realisable in $\mathsf{GCR}$:
\begin{enumerate}
\item $a \F \ulcorner A \to B \urcorner \to (a)_0 \T \ulcorner A \urcorner,$
\item $a \F \ulcorner A \to B \urcorner \to (a)_1 \F \ulcorner B \urcorner,$
\item $(a)_0 \T \ulcorner A \urcorner \to \big( (a)_1 \F \ulcorner B \urcorner \to a \F \ulcorner A \to B \urcorner \big)$.
\end{enumerate}
Therefore, we can give a term that realises $(\mathrm{GCR}_{\to})$.
\end{lemma}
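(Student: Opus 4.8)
The plan is to realise the three formulas (1)--(3) individually and then assemble from them a realiser of the full biconditional $(\mathrm{GCR}_{\to})$. Realising the left-to-right direction of $(\mathrm{GCR}_{\to})$ reduces, after distributing through the conjunction in the consequent, to realising (1) and (2) together, while the right-to-left direction is exactly (3); the final combination is then an instance of the closure of realisability under the propositional connectives, carried out with the combinators of Lemmas~\ref{lem:partial_compositionality_GCR} and~\ref{lem:continuation_constant_GCR} and the realisability of $\mathsf{PA}$-provable schemes (Lemma~\ref{explicit_realisability_PA}). As in the proof of Lemma~\ref{realisability_AxT}, we may assume $\ulcorner A \urcorner, \ulcorner B \urcorner \in \pred{Sent}_{\Lr}$, since otherwise the relevant formula is $\mathsf{PA}$-provable and hence realisable by Lemma~\ref{explicit_realisability_PA}.

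For each item I would follow the template of the preceding lemmas: take a refutation $u$ of the formula, decompose it through the clauses defining $\lVert \cdot \rVert$, and rewrite the resulting atomic $\T$/$\F$ statements by Lemma~\ref{explicit_formal_realisability}. The common feature is that the $a \F \ulcorner A \to B \urcorner$ component of $u$ is converted by Lemma~\ref{explicit_formal_realisability}(2a) into a realisation $(u)_0 \T \ulcorner a \in \lVert A \to B \rVert \urcorner$, and by definition $a \in \lVert A \to B \rVert$ is the coded \emph{conjunction} $(a)_0 \in \lvert A \rvert \land (a)_1 \in \lVert B \rVert$, written $C \land D$ below. Thus each case reduces to passing between this conjunction and its conjuncts, after unfolding $C \land D$ as $(C \to \neg D) \to \bot$ and applying $(\mathrm{GCR}_{\to})$ to the resulting codes. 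In (1) and (2) we hold a refutation of one conjunct together with the realiser $(u)_0$ of the conjunction, and must feed a suitable refutation of the conjunction back in: for (1), from the refuter $(u)_1$ of $C$ we build the refuter $\langle \mathsf{k}_{\pi} \cdot (u)_1, 0 \rangle$ of $\ulcorner C \land D \urcorner$, using that $\mathsf{k}_{\pi}$ sends a refutation of $C$ to a realisation of $\ulcorner C \to \neg D \urcorner$ (Lemma~\ref{lem:continuation_constant_GCR}(1)) and that every number refutes the false equation $\bot$ by $(\mathrm{GCR}_{P1})$; then $(\mathrm{Ax}_{\T})$ yields $\langle (u)_0, \langle \mathsf{k}_{\pi} \cdot (u)_1, 0 \rangle \rangle \in \pole$, so that $f := \lambda u. \langle (u)_0, \langle \mathsf{k}_{\pi} \cdot (u)_1, 0 \rangle \rangle$ realises (1) via $(\mathrm{Ax}_{\pole})$. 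Item (2) is analogous, but since $u$ now supplies a refutation of the second conjunct $D$, realising $\ulcorner C \to \neg D \urcorner$ requires building the second projection from $\mathsf{k}_{\pole}$ and the $\mathsf{h}$-combinator (Lemmas~\ref{lem:continuation_constant_GCR}(2) and~\ref{lem:partial_compositionality_GCR}(2)): given a realiser $d$ of $D$, the pair $\langle d, (u)_1 \rangle$ is in $\pole$, whence $\mathsf{k}_{\pole} \cdot \langle d, (u)_1 \rangle$ realises $\bot$, and two applications of $\mathsf{h}$ produce the required realiser of $\ulcorner C \to \neg D \urcorner$. Item (3) is dual: here $(u)_0$ and $((u)_1)_0$ realise both conjuncts and $((u)_1)_1$ refutes $C \land D$, so we must \emph{pair} the two realisers into a realiser of $\ulcorner C \land D \urcorner$ using the application combinator $\mathsf{i}$ (Lemma~\ref{lem:partial_compositionality_GCR}(1)); given any refutation $\langle r, s \rangle$ of the coded conjunction, $\mathsf{i} \cdot \langle \mathsf{i} \cdot \langle r, (u)_0 \rangle, ((u)_1)_0 \rangle$ realises $\ulcorner \bot \urcorner$, and a realiser of $\bot$ is contradictory with everything by Lemma~\ref{lem:truth-condition-P}(1), which supplies the pole membership needed to close the case through $(\mathrm{Ax}_{\pole})$.

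The main obstacle is not conceptual but lies in the bookkeeping around the encoded conjunction $a \in \lVert A \to B \rVert$: one must check that $\mathsf{GCR}$ proves the expected refutation and realisation clauses for $\ulcorner C \land D \urcorner$ under the unfolding of $\land$, and then choose the projection combinators (for (1) and (2)) and the pairing combinator (for (3)) so that the value computed by $f$ provably lands in $\pole$, allowing $(\mathrm{Ax}_{\pole})$ to deliver $\langle f, u \rangle \in \pole$. Once the three realisers are in hand, combining them through the combinators for $\to$ and $\land$ yields a single term realising $(\mathrm{GCR}_{\to})$; all remaining steps are the routine unfoldings already illustrated in Lemmas~\ref{realisability_Axpole}--\ref{realisability_GCRnegP}.
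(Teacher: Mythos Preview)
Your argument is correct and complete in outline, but it takes a different route from the paper's proof. The paper does not unfold the encoding of $\land$ at all; instead it treats the projection and pairing implications
\[
C \land D \to C, \qquad C \land D \to D, \qquad C \to (D \to C \land D)
\]
as black boxes. These are logical tautologies, so by the (language-generalised) formal realisability of $\mathsf{PA}$ there is a number $w$ with $w \T \ulcorner C \land D \to C \urcorner$ (respectively for the other two). One application of $\mathsf{i}$ then converts $(u)_0 \T \ulcorner C \land D \urcorner$ into $(\mathsf{i}\cdot\langle w,(u)_0\rangle) \T \ulcorner C \urcorner$, which paired with the refuter $(u)_1 \F \ulcorner C \urcorner$ lands in the pole by $(\mathrm{Ax}_\T)$. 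Item~(3) is handled symmetrically: the pairing tautology yields a realiser of $\ulcorner C \land D \urcorner$ from the two conjunct-realisers, and that is paired directly with the given refuter $((u)_1)_1 \F \ulcorner C \land D \urcorner$.

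Your approach instead unfolds $C \land D$ as $(C \to \neg D)\to\bot$ and builds the projection/pairing combinators by hand from $\mathsf{k}_\pi$, $\mathsf{k}_\pole$, $\mathsf{h}$, and $\mathsf{i}$. This is more elementary and self-contained---it never appeals to a general tautology-realisation result---at the cost of tracking more components of $u$ (for item~(3) you must descend to $(((u)_1)_1)_0$). The paper's version is shorter and more modular, since it reuses the existing realisability-of-$\mathsf{PA}$ machinery rather than replaying a specific instance of it. Either route is fine for establishing the lemma.
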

\begin{proof}
\begin{enumerate}
\item Take any $u$ such that $(u)_0 \in \lvert a \F \ulcorner A \to B \urcorner \rvert$ and $(u)_1 \in \lVert (a)_0 \T \ulcorner A \urcorner \rVert,$
then we have $(u)_0 \T \ulcorner \dot{a} \in \lVert A \to B \rVert \urcorner$ and $(u)_1 \F \ulcorner (\dot{a})_0 \in \lvert A \rvert \urcorner.$ By Lemma~\ref{formal_realisability_PA}, we obtain a number $w$ such that $w \T \ulcorner (\dot{a})_0 \in \lvert A \rvert \land (\dot{a})_1 \in \lVert B \rVert \to (\dot{a})_0 \in \lvert A \rvert \urcorner,$ 
from which and $(u)_0 \T \ulcorner (\dot{a})_0 \in \lvert A \rvert \land (\dot{a})_1 \in \lVert B \rVert \urcorner,$ 
we get $(\mathsf{i} \cdot \langle w, (u)_0 \rangle) \T \ulcorner (\dot{a})_0 \in \lvert A \rvert \urcorner.$ Thus, $\langle\langle \mathsf{i} \cdot \langle w, (u)_0 \rangle, (u)_1\rangle \in \pole$ by the axiom $(\mathrm{Ax}_{\T}).$
Therefore, the term $f:= \lambda u. \langle\langle \mathsf{i} \cdot \langle w, (u)_0 \rangle, (u)_1 \rangle$ satisfies $\langle f,u \rangle \in \pole$.
\item Similarly to the proof of the item 1.
\item Take any $u$ such that:
\begin{align}
&(u)_0 \T \ulcorner (\dot{a})_0 \in \lvert A \rvert \urcorner \label{real_GCR_to_1}, \\
&((u)_1)_0 \T \ulcorner (\dot{a})_1 \in \lVert B \rVert \urcorner \label{real_GCR_to_2}, \\
&((u)_1)_1 \F \ulcorner \dot{a} \in \lVert A \to B \rVert \urcorner \label{real_GCR_to_3}.
\end{align}
By Lemma~\ref{explicit_realisability_PA} and $\mathrm{NECR}$, we obtain a number $w$ such that
\begin{align}
&w \T \ulcorner (\dot{a})_0 \in |A| \to \Big( (\dot{a})_1 \in \lVert B \rVert \to \big( (\dot{a})_0 \in \lvert A \rvert \land (\dot{a})_1 \in \lVert B \rVert \big) \Big) \urcorner\label{real_GCR_to_4}.
\end{align}
Thus, from (\ref{real_GCR_to_1}) and (\ref{real_GCR_to_2}), we obtain:
\begin{align}
(\mathsf{i} \cdot \langle \mathsf{i} \cdot \langle w, (u)_0 \rangle, ((u)_1)_0 \rangle) \T \ulcorner (\dot{a})_0 \in \lvert A \rvert \land (\dot{a})_1 \in \lVert B \rVert \urcorner \label{real_GCR_to_5}.
\end{align}
On the other hand, (\ref{real_GCR_to_3}) is equivalent to:
\begin{align}
&((u)_1)_1 \F \ulcorner (\dot{a})_0 \in \lvert A \rvert \land (\dot{a})_1 \in \lVert B \rVert \urcorner \label{real_GCR_to_6}.
\end{align}
Thus, we have $\langle \mathsf{i} \cdot \langle \mathsf{i} \cdot \langle w, (u)_0 \rangle, ((u)_1)_0 \rangle, ((u)_1)_1 \rangle \in \pole$.
Therefore, we can take $g := \lambda u. \langle \mathsf{i} \cdot \langle \mathsf{i} \cdot \langle w, (u)_0 \rangle, ((u)_1)_0 \rangle, ((u)_1)_1 \rangle$ as a required realiser. \qed
\end{enumerate}
\end{proof}

\begin{lemma}[Realisation of $(\mathrm{GCR}_{\forall})$]\label{realisability_GCRforall}
There exists a term $s$ such that $\mathsf{GCR} \vdash s \in \lvert (\mathrm{GCR}_{\forall}) \rvert.$
\end{lemma}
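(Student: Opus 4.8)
The plan is to follow the template of Lemmas~\ref{realisability_AxT} and~\ref{realisability_GCRto}. First I would dispose of the case in which the argument is not (the code of) a suitable universal sentence: when $\pred{Sent}_{\Lr}(\dfa v x)$ fails, the relevant instance of $(\mathrm{GCR}_{\forall})$ is already a theorem of $\mathsf{PA}$, so Lemma~\ref{explicit_realisability_PA} supplies a primitive recursive realiser. It therefore suffices to realise, for a fixed code $\ulcorner \forall x A \urcorner$ of an $\Lr$-sentence and arbitrary $a$, the biconditional
\[
a \F \ulcorner \forall x A \urcorner \leftrightarrow (a)_1 \F \ulcorner A((\dot{a})_0) \urcorner.
\]
Since explicit realisability is closed under the propositional connectives, I would split this into its two implications and realise each separately.

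The crucial observation is that the clause defining explicit refutation for the universal quantifier, $s \in \lVert \forall x A \rVert = (s)_1 \in \lVert A((s)_0) \rVert$, was chosen precisely to mirror $(\mathrm{GCR}_{\forall})$. Concretely, for each direction I would take a refuter $u$ of the implication and unpack it using the definition of $\lVert \cdot \rVert$ together with Lemma~\ref{explicit_formal_realisability}(2), obtaining from the two conjuncts a realiser and a refuter of one and the same explicit sentence $\ulcorner (\dot{a})_1 \in \lVert A((\dot{a})_0) \rVert \urcorner$. Here I would use that, by the primitive recursive definitions of $\dot{\in}\lVert\cdot\rVert$ and $\dot{\in}\lvert\cdot\rvert$ and by Lemma~\ref{substitution_explicit_realisation}, $\mathsf{PA}$ proves $a \,\dot{\in}\, \lVert \ulcorner \forall x A \urcorner \rVert = \ulcorner (\dot{a})_1 \in \lVert A((\dot{a})_0) \rVert \urcorner$, so that both sides of the biconditional carry the same explicit content. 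Feeding this matched realiser--refuter pair into $(\mathrm{Ax}_{\T})$ yields $\langle (u)_0, (u)_1 \rangle \in \pole$, whence the near-identity term $f := \lambda u. \langle (u)_0, (u)_1 \rangle$ satisfies $\langle f, u \rangle \in \pole$ by $(\mathrm{Ax}_{\pole})$. The converse implication is realised symmetrically.

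The main obstacle is bookkeeping rather than conceptual: one must carefully track the evaluation of the function symbols $\dot{\in}\lVert\cdot\rVert$ and $\dot{\in}\lvert\cdot\rvert$ on the code $\ulcorner \forall x A \urcorner$ and confirm, via Lemma~\ref{substitution_explicit_realisation}, that the substitution $(\dot{a})_0$ appearing inside the code lines up with the meta-level projection $(a)_0$. Once this identification is in place the realiser is essentially the identity, reflecting that $(\mathrm{GCR}_{\forall})$ is definitionally true at the explicit level, and no combinators beyond the $\mathsf{i}$ and $\mathsf{s}$ of Lemma~\ref{lem:partial_compositionality_GCR} are required.
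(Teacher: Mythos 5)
Your proposal is correct and matches the paper's intended argument: the paper's own proof of this lemma is simply a deferral to the template of Lemma~\ref{realisability_GCRto}, and you instantiate that template faithfully, including the key point that the explicit clause $s \in \lVert \forall x A \rVert = (s)_1 \in \lVert A((s)_0) \rVert$ makes the realiser a near-identity pairing justified by Lemma~\ref{explicit_formal_realisability}, $(\mathrm{Ax}_{\T})$ and $(\mathrm{Ax}_{\pole})$. The only minor refinement is that aligning the term-level substitution $(\dot{a})_0$ inside the code with its numeral value is handled by the axiom $(\mathrm{Reg})$ (formal term regularity) rather than by Lemma~\ref{substitution_explicit_realisation}, which operates at the explicit, $\mathsf{PA}$-provable level.
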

\begin{proof}
The proof is similar to for $(\mathrm{GCR}_{\to})$. \qed
\end{proof}

\begin{theorem}\label{explicit_realisability_FSR}
For each $\mathcal{L}_R$-formula $A$, if $\mathsf{FSR} \vdash A,$
then there exists a term $s$ such that $\mathsf{FSR} \vdash s \in \lvert A \rvert.$
\end{theorem}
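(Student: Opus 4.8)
The plan is to argue by induction on the length of the $\mathsf{FSR}$-derivation of $A$, splitting into cases according to the last axiom or rule applied. The axioms of $\mathsf{FSR}$ fall into two groups, the arithmetical axioms of $\mathsf{PA}$ and the compositional axioms of $\mathsf{GCR}$, and these furnish the base cases; the inductive step must treat the rules of classical predicate logic together with the two distinguished rules $(\mathrm{NECR})$ and $(\mathrm{CONECR})$. Since $\mathsf{GCR}$ is a subtheory of $\mathsf{FSR}$, any realiser produced in $\mathsf{GCR}$ is automatically a realiser in $\mathsf{FSR}$, so for the base cases there is essentially nothing left to do: Lemma~\ref{explicit_realisability_PA} supplies a realiser for every theorem of $\mathsf{PA}$ (in particular every logical axiom), while Lemmas~\ref{realisability_Axpole}, \ref{realisability_AxT}, \ref{realisability_GCREq}, \ref{realisability_GCRnegP}, \ref{realisability_GCRP}, \ref{realisability_GCRto} and~\ref{realisability_GCRforall} supply realisers for the axioms of $\mathsf{GCR}$.

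For the logical inference rules I would show that explicit realisability is closed under classical predicate logic, exactly as in the proof of Lemma~\ref{explicit_realisability_PA} (equivalently Theorem~\ref{formal_realisability_PA}). Because the clauses defining $\lvert \cdot \rvert$ and $\lVert \cdot \rVert$ mirror the compositional axioms governing $\T$ and $\F$, the purely computational combinators $\mathsf{i}, \mathsf{h}, \mathsf{u}, \mathsf{s}, \mathsf{e}$ of Lemma~\ref{lem:partial_compositionality_GCR} realise modus ponens, the introduction and elimination rules for $\to$ and $\forall$ at the level of $\lvert \cdot \rvert$, while the continuation constants $\mathsf{k}_\pi, \mathsf{k}_\pole$ of Lemma~\ref{lem:continuation_constant_GCR} account for the classical (control) principles; composing these along the derivation produces a realiser of each conclusion from realisers of its premises. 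Open formulas are handled by treating their free variables as parameters, the realiser being uniform in them.

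The genuinely new cases are $(\mathrm{NECR})$ and $(\mathrm{CONECR})$, and both are dispatched uniformly through Lemma~\ref{explicit_formal_realisability}(2), which gives $\mathsf{GCR} \vdash x \in \lvert s \T \ulcorner A \urcorner \rvert \leftrightarrow x \T (s \mathbin{\dot{\in}} \lvert \ulcorner A \urcorner \rvert)$, the right-hand side being provably $x \T \ulcorner s \in \lvert A \rvert \urcorner$ by the defining property of $\dot{\in}$. Consider $(\mathrm{NECR})$, where $s \T \ulcorner A \urcorner$ is inferred from $s \in \lvert A \rvert$. As $s$ is closed and $A$ an $\Lr$-sentence, $s \in \lvert A \rvert$ is itself an $\Lr$-sentence, so the induction hypothesis yields a closed term $t$ with $\mathsf{FSR} \vdash t \in \lvert (s \in \lvert A \rvert) \rvert$. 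Applying $(\mathrm{NECR})$ of $\mathsf{FSR}$ one level up, to the sentence $s \in \lvert A \rvert$, gives $\mathsf{FSR} \vdash t \T \ulcorner s \in \lvert A \rvert \urcorner$, and the displayed equivalence rewrites this as $\mathsf{FSR} \vdash t \in \lvert s \T \ulcorner A \urcorner \rvert$; thus the very same $t$ realises the conclusion. The rule $(\mathrm{CONECR})$ is the mirror image: from a realiser $t$ of $s \T \ulcorner A \urcorner$ the equivalence yields $t \T \ulcorner s \in \lvert A \rvert \urcorner$, whence $(\mathrm{CONECR})$ of $\mathsf{FSR}$ delivers $t \in \lvert (s \in \lvert A \rvert) \rvert$, i.e.\ a realiser of $s \in \lvert A \rvert$.

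I expect the principal difficulty to be organisational rather than computational. The two distinguished rules go through cleanly — with the realiser passing unchanged from premise to conclusion — precisely because Lemma~\ref{explicit_formal_realisability}(2) was engineered so that explicit realisation of a formal-realisation statement coincides with formal realisation of the corresponding explicit-realisation statement; the substantive work of that lemma has therefore been discharged in advance. What remains is to check the side conditions of the rules (closedness of the term and sentencehood of the formula, both preserved since $s \in \lvert A \rvert$ is a sentence whenever $A$ is and $s$ is closed) and to make precise, once and for all, that explicit realisability is closed under classical logic, which is routine given the combinators already in hand but should be stated carefully for $\lvert \cdot \rvert$ in place of $\T$.
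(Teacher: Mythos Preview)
Your proposal is correct and follows essentially the same route as the paper: induction on the derivation, with the $\mathsf{GCR}$ axioms handled by Lemmas~\ref{realisability_Axpole}--\ref{realisability_GCRforall}, the logical rules by the argument underlying Lemma~\ref{explicit_realisability_PA}, and the two distinguished rules $(\mathrm{NECR})$ and $(\mathrm{CONECR})$ by exploiting (via Lemma~\ref{explicit_formal_realisability}(2)) that $t \in \lvert s \T \ulcorner B \urcorner \rvert$ coincides with $t \T \ulcorner s \in \lvert B \rvert \urcorner$, so that the same realiser passes from premise to conclusion after one further application of the corresponding rule. If anything, you are more explicit than the paper in separating the logical closure step and in citing Lemma~\ref{explicit_formal_realisability}(2), which the paper leaves implicit.
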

\begin{proof}
The proof is by induction on the derivation length of $A$.
Assume that $\mathsf{FSR} \vdash A.$
If $A$ is obtained by the axioms of $\mathsf{GCR},$ then we already have an answer by the above lemmas.
Thus, the remaining cases are $(\mathrm{NECR})$ and $(\mathrm{CONECR})$. 
\begin{description}
\item[$(\mathrm{NECR})$] Assume that $A$ is of the form $ s \T \ulcorner B \urcorner$ and is derived from $s \in \lvert B \rvert.$
Then, by induction hypothesis, we have some term $t$ such that $\mathsf{FSR} \vdash t \in \lvert s \in \lvert B \rvert \rvert.$
Thus, by $(\mathrm{NECR})$ we deduce $t \T \ulcorner \dot{s} \in \lvert B \rvert \urcorner,$ which is $t \in \lvert s \T \ulcorner B \urcorner \rvert$, and hence $t$ is a required term.

\item[$(\mathrm{CONECR})$] Assume that $A$ is of the form $ s \in \lvert B \rvert$ and is derived from $s \T \ulcorner B \urcorner$.
Then, by induction hypothesis, we have some term $t$ such that $\mathsf{FSR} \vdash t \in \lvert s \T \ulcorner B \urcorner \rvert.$ Thus, by $(\mathrm{CONECR})$ we deduce $t \in \lvert s \in \lvert B \rvert \rvert,$ and hence, $t$ is a required term. \qed
\end{description}
\end{proof}

\begin{corollary}\label{cor:real_pole-empty}
There exists a term $t$ such that $\mathsf{FSR} \vdash t \in \lvert \pole = \emptyset \rvert$.
Thus, every theorem of $\mathsf{FSR}^{\emptyset}$ is realisable in $\mathsf{FSR}$, and hence $\mathsf{FSR}^{\emptyset}$ is self realisable.
\end{corollary}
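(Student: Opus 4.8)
Corollary~\ref{cor:real_pole-empty} asserts two things: first, that $\pole = \emptyset$ is explicitly realisable in $\mathsf{FSR}$, and second, that this yields self-realisability of $\mathsf{FSR}^{\emptyset}$. Let me sketch a proof plan.

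The statement has two parts: producing an explicit realiser of $\pole = \emptyset$, and deducing self realisability of $\mathsf{FSR}^{\emptyset}$ from it. The plan is to establish the first part by a direct construction inside $\mathsf{GCR}$ and to obtain the second by extending the induction of Theorem~\ref{explicit_realisability_FSR}.

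For the first part I would begin by unfolding the definitions. By Definition~\ref{defn:empty-pole} the sentence $\pole = \emptyset$ is $\neg\exists x(x\in\pole)$, and since $\exists$ and $\neg$ are the standard defined connectives this is literally $\neg\neg C$, where $C := \forall x(x \in\pole\to\bot)$. Expanding the clauses for $\lVert\cdot\rVert$ and $\lvert\cdot\rvert$, and using that every number refutes the false equation $\bot$, one checks that $t \in\lvert\neg\neg C\rvert$ reduces to the requirement that $\langle t,u\rangle\in\pole$ for every $u$ with $(u)_0\in\lvert C\to\bot\rvert$; and $(u)_0\in\lvert C\to\bot\rvert$ says precisely that $(u)_0$ contradicts every refuter $v$ of $C\to\bot$, these being exactly the $v$ with $(v)_0\in\lvert C\rvert$. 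Hence the whole problem reduces to exhibiting a single realiser $c\in\lvert C\rvert$.

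The heart of the argument --- and the step I expect to be the main obstacle --- is realising $C = \forall x(x\in\pole\to\bot)$. Unfolding, a refuter $p\in\lVert C\rVert$ is exactly a $p$ with $((p)_1)_0\in\lvert (p)_0\in\pole\rvert$; that is, $((p)_1)_0$ realises the atomic formula $(p)_0\in\pole$. The key observation is that the trivial tautology $(p)_0\in\pole\to (p)_0\in\pole$ witnesses $(p)_0\in\lVert (p)_0\in\pole\rVert$, so instantiating the realiser $((p)_1)_0$ at the refuter $(p)_0$ yields $\langle ((p)_1)_0,(p)_0\rangle\in\pole$. Therefore the term $c := \lambda p.\langle ((p)_1)_0,(p)_0\rangle$ satisfies $c\cdot p\in\pole$, and $(\mathrm{Ax}_{\pole})$ gives $\langle c,p\rangle\in\pole$ for every refuter $p$, i.e.\ $\mathsf{GCR}\vdash c\in\lvert C\rvert$. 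With $c$ in hand, a double-negation introduction finishes the first part: taking $t := \lambda u.\langle (u)_0,\langle c,0\rangle\rangle$, the pair $\langle c,0\rangle$ refutes $C\to\bot$, so for any $u$ as above we get $\langle (u)_0,\langle c,0\rangle\rangle\in\pole$, whence $t\cdot u\in\pole$ and $(\mathrm{Ax}_{\pole})$ yields $\langle t,u\rangle\in\pole$. Thus $\mathsf{GCR}\vdash t\in\lvert\pole=\emptyset\rvert$, and a fortiori $\mathsf{FSR}\vdash t\in\lvert\pole=\emptyset\rvert$; note that only $(\mathrm{Ax}_{\pole})$ and pure logic are used.

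For the second part I would observe that $\mathsf{FSR}^{\emptyset}$ is just $\mathsf{FSR}$ together with the single axiom $\pole=\emptyset$. Re-running the induction on derivations from Theorem~\ref{explicit_realisability_FSR}, every case for the $\mathsf{GCR}$ axioms, the logical rules, and the rules $(\mathrm{NECR})$ and $(\mathrm{CONECR})$ is handled verbatim; the only additional base case is the axiom $\pole=\emptyset$, which is realisable in $\mathsf{FSR}$ by the first part. Hence for every theorem $A$ of $\mathsf{FSR}^{\emptyset}$ there is a term $s$ with $\mathsf{FSR}\vdash s\in\lvert A\rvert$, and since $\mathsf{FSR}\subseteq\mathsf{FSR}^{\emptyset}$ this is exactly self realisability of $\mathsf{FSR}^{\emptyset}$.
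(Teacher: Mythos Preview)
Your proof is correct and matches the paper's approach: both identify $\lambda p.\langle ((p)_1)_0,(p)_0\rangle$ as the realiser of $\forall x(x\in\pole\to\bot)$, and both deduce self realisability of $\mathsf{FSR}^\emptyset$ by extending the induction of Theorem~\ref{explicit_realisability_FSR} with the extra base case $\pole=\emptyset$. Your version is slightly cleaner in two places --- you avoid the paper's case split on whether $(p)_0\in\pole$ by directly observing that $(p)_0\in\lVert(p)_0\in\pole\rVert$ unfolds to the tautology $(p)_0\in\pole\to(p)_0\in\pole$, and you handle the outer double negation explicitly whereas the paper silently works with $\forall x\,\neg(x\in\pole)$ --- but these are presentational rather than substantive differences.
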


\begin{proof}
The claim $t \in \lvert \pole = \emptyset \rvert$ is equivalent to the following:
\begin{align}
t \in \lvert \pole = \emptyset \rvert &\Leftrightarrow \forall x. \ x \in \lVert \forall y. \ y \in \pole \to \bot \rVert \to \langle t, x \rangle \in \pole \notag \\
&\Leftrightarrow \forall x. \ (x)_1 \in \lVert (x)_0 \in \pole \to \bot \rVert \to \langle t,x \rangle \in \pole \notag \\
&\Leftrightarrow \forall x. \ ((x)_1)_0 \in \lvert (x)_0 \in \pole \rvert \land ((x)_1)_1 \in \lVert \bot \rVert \to \langle t,x \rangle \in \pole \notag \\
&\Leftrightarrow \forall x. \ ((x)_1)_0 \in \lvert (x)_0 \in \pole \rvert \to \langle t,x \rangle \in \pole. \label{lem:real_pole-empty:claim}
\end{align}
So, to prove (\ref{lem:real_pole-empty:claim}), we take any $x$ such that $((x)_1)_0 \in \lvert (x)_0 \in \pole \rvert$, then we want to find a term $t$ such that $\langle t,x \rangle \in \pole$.
We divide the cases by the truth value of $(x)_0 \in \pole$:
\begin{itemize}
\item If $(x)_0 \in \pole$, then $((x)_1)_0 \in \lvert (x)_0 \in \pole \rvert$ implies $\langle ((x)_1)_0, (x)_0 \rangle \in \pole$ by Lemma~\ref{lem:truth-condition-P}.
\item Otherwise, it follows that $\forall z. \ \langle ((x)_1)_0, z \rangle \in \pole$ by Lemma~\ref{lem:truth-condition-P}, and hence we again get $\langle ((x)_1)_0, (x)_0 \rangle \in \pole$.
\end{itemize}
Thus, for the term $t := \lambda x. \langle ((x)_1)_0, (x)_0 \rangle$, we have $\langle t,x \rangle \in \pole$ by the axiom ($\mathsf{Ax}_{\pole}$).
Therefore, (\ref{lem:real_pole-empty:claim}) is proved. \qed
\end{proof}


\section{Applications of Self Realisability}\label{sec:applications}



In this section, we provide two applications of self realisability of $\mathsf{FSR}^{(\emptyset)}$.
Firstly, relative truth definablity for $\mathsf{GCT}$ (Proposition~\ref{prop:truth-def-GCT}) can be generalised to that for $\mathsf{FS}$.
Consequently, the proof-theoretic lower bound of $\mathsf{FSR}^+$ is determined.

\begin{theorem}\label{thm:truth-def-FS}
\begin{enumerate}
\item 
Let $\mathcal{T}_{\mathsf{FS}}$ be as in Proposition~\ref{prop:truth-def-GCT}.
For any $\mathcal{L}_T$-formula $A$, if $\mathsf{GCT} + \mathsf{NEC} \vdash A$, then $\mathsf{FSR}^{\emptyset} \vdash \mathcal{T}_{\mathsf{FS}}(A)$.

\item $\mathsf{FS}$ is relatively truth definable in $\mathsf{FSR}^{\emptyset} + \mathsf{TI}({<} \varphi20)$ by the same translation.

\end{enumerate}
\end{theorem}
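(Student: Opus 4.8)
The plan is to prove part~1 by induction on the length of a $\mathsf{GCT}+\mathsf{NEC}$-derivation of $A$, isolating the rule $(\mathrm{NEC})$ as the only genuinely new case, and then to obtain part~2 by rerunning the same induction for the theory $\mathsf{GCT}+\mathsf{NEC}+\mathsf{TI}({<}\varphi 2 0)$, which has the same theorems as $\mathsf{FS}$ by Fact~\ref{fact:FS_GCT-NEC-TI}.

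For part~1, I first note that $\mathsf{FSR}^{\emptyset}$ extends $\mathsf{GCR}^{\emptyset}$, so every case in which the last inference introduces a $\mathsf{GCT}$-axiom (the arithmetical axioms together with the truth axioms $(\mathrm{GCT}_{=})$, $(\mathrm{GCT}_{\to})$, $(\mathrm{GCT}_{\forall})$) is already discharged by the proof of Proposition~\ref{prop:truth-def-GCT}, now read inside the larger theory $\mathsf{FSR}^{\emptyset}$. Since $\mathcal{T}_{\mathsf{FS}}$ commutes with the logical symbols, the logical axioms and rules are preserved verbatim under the translation and cause no difficulty. Thus the induction reduces to the single new case of $(\mathrm{NEC})$; note that, thanks to Fact~\ref{fact:FS_GCT-NEC-TI}, the rule $(\mathrm{CONEC})$ need never be treated directly.

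The crux is exactly this $(\mathrm{NEC})$ step, and here self-realisability does the work. Suppose $A$ is $\T\ulcorner B\urcorner$, inferred from the $\Lt$-sentence $B$. By the induction hypothesis $\mathsf{FSR}^{\emptyset}\vdash\mathcal{T}_{\mathsf{FS}}(B)$, and $\mathcal{T}_{\mathsf{FS}}(B)$ is an $\Lr$-sentence, so Corollary~\ref{cor:real_pole-empty} furnishes a closed term $s$ with $\mathsf{FSR}\vdash s\in\lvert\mathcal{T}_{\mathsf{FS}}(B)\rvert$. An application of the rule $(\mathrm{NECR})$ gives $\mathsf{FSR}\vdash s\T\ulcorner\mathcal{T}_{\mathsf{FS}}(B)\urcorner$, hence a fortiori $\mathsf{FSR}^{\emptyset}\vdash s\T\ulcorner\mathcal{T}_{\mathsf{FS}}(B)\urcorner$. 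It remains to replace the realiser $s$ by the distinguished witness $0$ demanded by the translation: from $(\mathrm{Ax}_{\T})$ together with $\pole=\emptyset$ one obtains, for every $\Lr$-sentence $C$, the equivalence $a\T\ulcorner C\urcorner\leftrightarrow\forall b\,\lnot(b\F\ulcorner C\urcorner)$, whose right-hand side is independent of $a$; consequently $\mathsf{FSR}^{\emptyset}\vdash s\T\ulcorner\mathcal{T}_{\mathsf{FS}}(B)\urcorner\leftrightarrow 0\T\ulcorner\mathcal{T}_{\mathsf{FS}}(B)\urcorner$. Finally, $\mathsf{PA}\vdash\tau(\ulcorner B\urcorner)=\ulcorner\mathcal{T}_{\mathsf{FS}}(B)\urcorner$ identifies $0\T\ulcorner\mathcal{T}_{\mathsf{FS}}(B)\urcorner$ with $\mathcal{T}_{\mathsf{FS}}(\T\ulcorner B\urcorner)$, completing the case. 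I expect this interplay --- self-realisability in $\mathsf{FSR}$, its lifting by $(\mathrm{NECR})$, and the collapse of the realiser under $\pole=\emptyset$ --- to be the only delicate point of the whole argument.

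For part~2, I would carry out the same induction for derivations in $\mathsf{GCT}+\mathsf{NEC}+\mathsf{TI}({<}\varphi 2 0)$ with target theory $\mathsf{FSR}^{\emptyset}+\mathsf{TI}({<}\varphi 2 0)$. Every case of part~1 transfers unchanged, so the only addition is an instance $A=\pred{TI}(C,x)$ of $\mathsf{TI}({<}\varphi 2 0)$ for an $\Lt$-formula $C$. Because $\mathcal{T}_{\mathsf{FS}}$ commutes with $\to$ and $\forall$ and fixes the arithmetical formulas $\beta<\alpha$ and $x\in\pred{OT}$ occurring in $\pred{Prog}$, its value is $\mathcal{T}_{\mathsf{FS}}(A)=\pred{TI}(\mathcal{T}_{\mathsf{FS}}(C),x)$, an $\Lr$-instance of $\mathsf{TI}({<}\varphi 2 0)$ and hence an axiom of the target theory. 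As $\mathcal{T}_{\mathsf{FS}}$ leaves the $\mathcal{L}$-vocabulary untouched, this yields relative truth definability of $\mathsf{FS}$ in $\mathsf{FSR}^{\emptyset}+\mathsf{TI}({<}\varphi 2 0)$.
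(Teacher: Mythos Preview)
Your proposal is correct and follows essentially the same route as the paper: induction on derivations, reduction of the $\mathsf{GCT}$-axioms to Proposition~\ref{prop:truth-def-GCT}, self-realisability for the $(\mathrm{NEC})$ case, and the observation that $\mathcal T_{\mathsf{FS}}$ sends $\Lt$-instances of $\mathsf{TI}({<}\varphi 2 0)$ to $\Lr$-instances for part~2. The only cosmetic difference is the order of two steps in the $(\mathrm{NEC})$ case: the paper first collapses the realiser to $0$ on the \emph{explicit} side (from $s\in\lvert\mathcal T_{\mathsf{FS}}(B)\rvert$ and $\pole=\emptyset$ infer $0\in\lvert\mathcal T_{\mathsf{FS}}(B)\rvert$, since $a\in\lvert C\rvert$ becomes $\forall b\,\lnot(b\in\lVert C\rVert)$) and only then applies $(\mathrm{NECR})$, whereas you apply $(\mathrm{NECR})$ first and collapse on the \emph{formal} side via $(\mathrm{Ax}_{\T})$; either order is fine.
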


\begin{proof}
1. The proof proceeds by induction on the derivation of $A$.
By Proposition~\ref{prop:truth-def-GCT}, it suffices to consider the case of $\mathrm{NEC}$, so we assume that $A$ is of the form $ \T \ulcorner B \urcorner$ and is derived from $B$.
By the induction hypothesis, we get $\mathsf{FSR}^{\emptyset} \vdash \mathcal{T}_{\mathsf{FS}}(B)$.
Since $\mathsf{FSR}^{\emptyset}$ is self realisable by Corollary~\ref{cor:real_pole-empty}, there exists a term $s$ such that $\mathsf{FSR}^{\emptyset} \vdash s \in \lvert \mathcal{T}_{\mathsf{FS}}(B) \rvert$.
Because of $\pole = \emptyset$, we also have $0 \in \lvert \mathcal{T}_{\mathsf{FS}}(B) \rvert$.
By ($\mathrm{NECR}$), it follows that $\mathsf{FSR}^{\emptyset} \vdash 0 \T \ulcorner \mathcal{T}_{\mathsf{FS}}(B) \urcorner$.
Here, $0 \T \ulcorner \mathcal{T}_{\mathsf{FS}}(B) \urcorner$ is equivalent to $ \mathcal{T}_{\mathsf{FS}}(\T \ulcorner B \urcorner)$, so we are done.

2. By Fact~\ref{fact:FS_GCT-NEC-TI} and item 1 of this theorem, it is sufficient to see that $\mathsf{FSR}^{\emptyset} + \mathsf{TI}({<} \varphi20)$ derives $\mathcal{T}_{\mathsf{FS}}(A)$ for each instance $A$ of $\mathrm{TI}({<} \varphi20)$ in the language $\Lt$.
But, by the definition of $\mathcal{T}_{\mathsf{FS}}$, the formula $\mathcal{T}_{\mathsf{FS}}(A)$ is again an instance of $\mathrm{TI}({<} \varphi20)$ in the language $\Lr$, which is an axiom of $\mathsf{FSR}^{\emptyset} + \mathsf{TI}({<} \varphi20)$. \qed
\end{proof}


\begin{theorem}\label{thm:strength_FSR+}
Let $\mathsf{S} \equiv \mathsf{T}$ mean that the systems $\mathsf{S}$ and $\mathsf{T}$ have the same $\mathcal{L}$-theorems.
Then, the following holds:
\[
\mathsf{FS}\equiv \mathsf{PA} + \mathsf{TI}({<} \varphi20) \equiv \mathsf{FSR}^{+} \equiv \mathsf{FSR}^{\emptyset} + \mathsf{TI}({<} \varphi20) .
\]
\end{theorem}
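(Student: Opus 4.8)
The plan is to show all four systems have the same $\mathcal{L}$-theorems by sandwiching the arithmetical consequences of $\mathsf{FSR}^{\emptyset}+\mathsf{TI}({<}\varphi20)$ and of $\mathsf{FSR}^{+}$ between those of $\mathsf{PA}+\mathsf{TI}({<}\varphi20)$ and $\mathsf{FS}$, and then invoking the known ordinal analysis of Friedman--Sheard truth to identify the two endpoints. For the latter: $\mathsf{FS}\supseteq\mathsf{PA}$ and $\mathsf{FS}$ proves every instance of $\mathrm{TI}({<}\varphi20)$ (\cite[Theorem~2.41]{leigh2010ordinal}), while conversely its arithmetical consequences are contained in $\mathsf{PA}+\mathsf{TI}({<}\varphi20)$ since $|\mathsf{FS}|=\varphi20$ (\cite{leigh2010ordinal}; cf.\ Fact~\ref{fact:FS_GCT-NEC-TI}); hence $\mathsf{FS}\equiv\mathsf{PA}+\mathsf{TI}({<}\varphi20)$. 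It then remains only to place $\mathsf{FSR}^{\emptyset}+\mathsf{TI}({<}\varphi20)$ and $\mathsf{FSR}^{+}$ into this equivalence class.

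The link with $\mathsf{FSR}^{\emptyset}+\mathsf{TI}({<}\varphi20)$ is immediate from results already at hand. Because $\mathcal{T}_{\emptyset}$ commutes with the logical symbols and fixes atomic $\mathcal{L}$-formulas, $\mathcal{T}_{\emptyset}(A)=A$ for $A\in\mathcal{L}$, so Proposition~\ref{upper-bound_FSR-empty} gives the upper bound $\mathsf{FSR}^{\emptyset}+\mathsf{TI}({<}\varphi20)\vdash A\Rightarrow\mathsf{FS}\vdash A$ for $\mathcal{L}$-sentences $A$. Dually $\mathcal{T}_{\mathsf{FS}}$ fixes $\mathcal{L}$-formulas, so the relative truth definability of Theorem~\ref{thm:truth-def-FS}(2) supplies the reverse inclusion. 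Thus $\mathsf{FSR}^{\emptyset}+\mathsf{TI}({<}\varphi20)\equiv\mathsf{FS}$.

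For $\mathsf{FSR}^{+}$ the upper bound is the easy half: in $\mathsf{FSR}^{\emptyset}$ the axiom $\pole=\emptyset$ yields, by Lemma~\ref{lem:reflection_empty_GCR}, the reflection schema for all $\mathcal{L}$-sentences, which subsumes the reflection rule of $\mathsf{FSR}^{+}$; hence $\mathsf{FSR}^{+}\subseteq\mathsf{FSR}^{\emptyset}\subseteq\mathsf{FSR}^{\emptyset}+\mathsf{TI}({<}\varphi20)$ and the arithmetical consequences of $\mathsf{FSR}^{+}$ sit inside the common value. The hard half, and the main obstacle of the whole theorem, is the matching lower bound: $\mathsf{FSR}^{+}$ must prove $\mathrm{TI}({<}\varphi20)$ for every $\mathcal{L}$-formula. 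Here the interpretation route is closed off, since $\mathsf{FSR}^{+}$ does not prove $\pole=\emptyset$ --- by Proposition~\ref{soundness_FSR+} it is sound in revision models over every pole, so $\pole=\emptyset$ cannot be a theorem --- and so the strength cannot be imported through $\mathcal{T}_{\mathsf{FS}}$ as it was for $\mathsf{FSR}^{\emptyset}$. The transfinite induction must instead be derived directly, as was done for the typed theory $\mathsf{CR}^{+}$ in \cite{hayashi2024compositional}.

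The direct derivation I envisage mirrors the ordinal analysis of $\mathsf{FS}$, with self-realisability in the role of $(\mathrm{NEC})$ and the reflection rule in the role of $(\mathrm{CONEC})$. The target is a uniform jump lemma: if $\mathsf{FSR}^{+}$ proves transfinite induction below $\alpha$ for all $\mathcal{L}$-formulas, then it proves it below $\varepsilon_{\alpha}$. A single step would run as follows. From a derivation of $\mathrm{TI}({<}\alpha)$, Theorem~\ref{explicit_realisability_FSR} together with $(\mathrm{NECR})$ internalises the principle as a formally realised statement; the compositional realisability axioms of $\mathsf{GCR}$ and full $\Lr$-induction are then used to assemble a realiser of the stronger principle $\mathrm{TI}({<}\varepsilon_{\alpha})$; finally, being an $\mathcal{L}$-sentence, this is brought back to the object level by the reflection rule. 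Since the $\mathcal{L}$-sentences so produced may be re-internalised, the jump iterates, and because $\varphi20$ is by definition the least ordinal closed under $\alpha\mapsto\varepsilon_{\alpha}$, iteration reaches $\mathrm{TI}({<}\alpha)$ for every $\alpha<\varphi20$. The hard part will be to make this single step genuinely uniform in $\alpha$ and to verify that the two Friedman--Sheard rules, rather than the unavailable axiom $\pole=\emptyset$, are by themselves enough to carry the climb all the way up to the closure ordinal $\varphi20$.
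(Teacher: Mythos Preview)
Your treatment of the first three equivalences matches the paper. The divergence is in the lower bound for $\mathsf{FSR}^{+}$, where you claim the interpretation route is closed off and propose instead a direct ordinal-analytic jump argument. That claim overlooks a shortcut the paper exploits.

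It is true that $\mathsf{FSR}^{+}$ does not prove $\pole=\emptyset$, so $\mathcal{T}_{\mathsf{FS}}$ cannot target $\mathsf{FSR}^{+}$ directly. But the paper routes through self-realisability: any $\mathcal{L}$-sentence $A$ provable in $\mathsf{GCT}+\mathrm{NEC}$ is derivable in $\mathsf{FSR}^{\emptyset}$ by Theorem~\ref{thm:truth-def-FS}(1); by Corollary~\ref{cor:real_pole-empty} every theorem of $\mathsf{FSR}^{\emptyset}$ is explicitly realisable in $\mathsf{FSR}$, so $\mathsf{FSR}\vdash s\in\lvert A\rvert$ for some closed term $s$; then $(\mathrm{NECR})$ yields $\mathsf{FSR}\vdash s\T\ulcorner A\urcorner$, and since $A\in\mathcal{L}$ the reflection rule of $\mathsf{FSR}^{+}$ delivers $A$. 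So the lower bound for $\mathsf{FSR}^{+}$ drops out of self-realisability plus one application each of $(\mathrm{NECR})$ and the reflection rule, with no ordinal-analytic work at all.

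Your proposed jump-lemma approach is plausible in outline and parallels the standard proof that $\mathsf{FS}$ derives $\mathrm{TI}({<}\varphi20)$, but you leave it as a sketch whose uniformity you yourself flag as the hard part. The paper's route is both shorter and complete; the key idea you are missing is precisely that self-realisability of $\mathsf{FSR}^{\emptyset}$ inside $\mathsf{FSR}$ lets one import strength into $\mathsf{FSR}^{+}$ without ever needing $\pole=\emptyset$ as an axiom.
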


\begin{proof}
Firstly, we remark that Halbach (\cite[Theorem~5.13]{halbach1994system}) showed that $\mathsf{FS} \equiv \mathsf{GCT} + \mathsf{NEC} \equiv \mathsf{PA} + \mathsf{TI}({<} \varphi20)$.
Secondly, Proposition~\ref{upper-bound_FSR-empty} establishes that every $\mathcal{L}$-consequence of $\mathsf{FSR}^{\emptyset} + \mathsf{TI}({<} \varphi 20)$ is derivable in $\mathsf{FS}$.
Since $\mathsf{FSR}^+$ is a subtheory of $\mathsf{FSR}^{\emptyset}$, it suffices to show that $\mathsf{FSR}^+$ derives every $\mathcal{L}$-consequence of $\mathsf{GCT} + \mathsf{NEC}$.

By Theorem~\ref{thm:truth-def-FS}, every $\mathcal{L}$-consequence of $\mathsf{GCT} + \mathsf{NEC}$ is derivable in $\mathsf{FSR}^{\emptyset}$.
Moreover, by Corollary~\ref{cor:real_pole-empty}, $\mathsf{FSR}^{\emptyset}$ is realisable in $\mathsf{FSR}$.
Therefore, every $\mathcal{L}$-consequence of $\mathsf{GCT} + \mathsf{NEC}$ is realisable in $\mathsf{FSR}$, and thus is derived by the reflection rule in $\mathsf{FSR}^+$.
In conclusion, all of the above equivalence relations are justified. \qed

\end{proof}


\subsection{McGee's Theorem in $\mathsf{FSR}$}

The second application of self realisability concerns $\omega$-inconsistency phenomenon by McGee (\cite{mcgee1985truthlike}).
Following Halbach's formulation (\cite[Theorem~13.9]{halbach2014axiomatic}), we define $f(x,y)$ as a binary primitive function symbol such that for every $\Lt$-sentence $A$:
\begin{align}
f(0, \ulcorner A \urcorner) &= \ulcorner A \urcorner, \notag \\
f(n, \ulcorner A \urcorner) &= \ulcorner \underbrace{\T \ulcorner \T \ulcorner \cdots \T}_\text{$n$ occurrences} \ulcorner A \urcorner \urcorner ^{\cdots} \urcorner \ \ \text{for $n \geq 1$.} \notag
\end{align}
We define an $\Lt$-sentence $\sigma$ such that $\sigma \leftrightarrow \neg \forall x \big( \T (f(x, \ulcorner \sigma \urcorner) )\big)$ is true; thus $\sigma$ says that the result of prefixing the truth predicate to $\sigma$ some times is false.
Then, the following holds:

\begin{fact}[\cite{mcgee1985truthlike}]\label{fact:McGee-FSR_emp}
Let $\sigma$ be as above. 
Then, 
\begin{enumerate}
	\item $\mathsf{GCT} + \mathsf{NEC} \vdash \sigma$,
	\item $\mathsf{GCT} + \mathsf{NEC} \vdash \T(f(n, \ulcorner \sigma \urcorner))$ for each natural number $n$.
\end{enumerate}
Thus, $\mathsf{GCT} + \mathsf{NEC}$ is $\omega$-inconsistent.
\end{fact}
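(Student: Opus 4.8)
The plan is to reduce everything to a single biconditional and then propagate it with \( \mathrm{NEC} \). Write \( \theta(x) \) for the \( \Lt \)-formula \( \T(f(x, \ulcorner \sigma \urcorner)) \), so the diagonal equivalence reads \( \mathsf{PA} \vdash \sigma \leftrightarrow \neg \forall x\, \theta(x) \). First I would record the purely syntactic facts about \( f \), provable already in \( \mathsf{PA} \): that \( f(0, \ulcorner \sigma \urcorner) = \ulcorner \sigma \urcorner \) (so \( \theta(0) \) is \( \T\ulcorner \sigma \urcorner \) up to the provable value of \( f \)), and that the term \( \ulcorner \theta(\dot v) \urcorner \) equals \( f(v+1, \ulcorner \sigma \urcorner) \), since prefixing one truth predicate to the sentence coded by \( f(v, \ulcorner\sigma\urcorner) \) is exactly the defining recursion of \( f \). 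Consequently \( \T\ulcorner \theta(\dot v)\urcorner \) and \( \theta(v+1) \) are provably equivalent in \( \mathsf{GCT} \), using the term-regularity principle (Lemma~\ref{term-regular-GCT}) to absorb the mismatch between a closed term and its numeral value.

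The heart of the argument is the first claim, \( \mathsf{GCT}+\mathsf{NEC}\vdash\sigma \). I would obtain it from the chain
\[
\T\ulcorner\sigma\urcorner \leftrightarrow \T\ulcorner\neg\forall x\,\theta(x)\urcorner \leftrightarrow \neg\T\ulcorner\forall x\,\theta(x)\urcorner \leftrightarrow \neg\forall v\,\T\ulcorner\theta(\dot v)\urcorner \leftrightarrow \neg\forall v\,\theta(v+1).
\]
The first equivalence comes from applying \( \mathrm{NEC} \) to the \( \mathsf{PA} \)-theorem \( \sigma\leftrightarrow\neg\forall x\,\theta(x) \) and decomposing the resulting \( \T\ulcorner\cdot\urcorner \) with the biconditional and \( (\mathrm{GCT}_{\to}) \) clauses; the second uses \( \neg A = A\to\bot \), \( (\mathrm{GCT}_{\to}) \), and that \( \T\ulcorner\bot\urcorner \) is refutable via \( (\mathrm{GCT}_{=}) \); the third is \( (\mathrm{GCT}_{\forall}) \); and the last is the syntactic identity above. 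Since \( \theta(0) \) is \( \T\ulcorner\sigma\urcorner \), the chain yields \( \theta(0)\to\neg\forall v\,\theta(v+1) \), i.e. \( \neg(\theta(0)\wedge\forall v\,\theta(v+1)) \), which is exactly \( \neg\forall x\,\theta(x) \); by the diagonal equivalence this is \( \sigma \). Note that this uses only \( \mathrm{NEC} \) and the \( \Lt \)-compositional axioms, which is precisely why \( \mathsf{GCT} \) (quantifying over \( \Lt \)-sentences) rather than \( \mathsf{CT} \) is required.

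For the second claim I would argue by external induction on \( n \), starting from \( \sigma \). The base case applies \( \mathrm{NEC} \) to \( \sigma \), giving \( \T\ulcorner\sigma\urcorner = \T(f(0,\ulcorner\sigma\urcorner)) \). For the step, \( \T(f(n,\ulcorner\sigma\urcorner)) \) is an \( \Lt \)-sentence whose own code is \( f(n+1,\ulcorner\sigma\urcorner) \), so one further application of \( \mathrm{NEC} \) (with term-regularity to pass between \( f(\underline n,\ulcorner\sigma\urcorner) \) and its numeral value) produces \( \T(f(n+1,\ulcorner\sigma\urcorner)) \). The \( \omega \)-inconsistency is then immediate: the first claim gives \( \mathsf{GCT}+\mathsf{NEC}\vdash\exists x\,\neg\theta(x) \), while the second gives \( \mathsf{GCT}+\mathsf{NEC}\vdash\theta(\underline n) \) for every numeral \( \underline n \), so the theory proves an existential while refuting each of its numerical instances.

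The main obstacle I expect is not the logic but the coding bookkeeping in the steps identifying \( \T\ulcorner\theta(\dot v)\urcorner \) with \( \theta(v+1) \) and \( \T(f(\underline n,\ulcorner\sigma\urcorner)) \) with \( \T \) of its numeral value: these require keeping the defining recursion of \( f \) aligned with the action of the compositional clauses and discharging the term-versus-numeral mismatch through Lemma~\ref{term-regular-GCT}. Some care is also needed to verify the side condition \( \ulcorner\theta_x\urcorner\in\pred{Sent}_{\Lt} \) so that \( (\mathrm{GCT}_{\forall}) \) legitimately applies to \( \forall x\,\theta(x) \).
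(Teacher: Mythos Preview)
The paper does not supply its own proof of this statement; it is recorded as a cited fact (McGee, via Halbach's formulation \cite[Theorem~13.9]{halbach2014axiomatic}) and used as a black box. Your argument is correct and is essentially the standard one: the chain of equivalences reducing $\T\ulcorner\sigma\urcorner$ to $\neg\forall v\,\theta(v+1)$ via $\mathrm{NEC}$ and the compositional clauses is right, and the coding points you flag---aligning $\T\ulcorner\theta(\dot v)\urcorner$ with $\theta(v+1)$ through the recursion for $f$ and Lemma~\ref{term-regular-GCT}, and checking the $\pred{Sent}_{\Lt}$ side condition for $(\mathrm{GCT}_{\forall})$---are exactly where the routine work lies and go through as you indicate.
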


Since $\omega$-inconsistency is preserved by relative truth definitions (\cite[Proposition~28]{fujimoto2010relative}), it follows, by Theorem~\ref{thm:truth-def-FS} and Fact~\ref{fact:McGee-FSR_emp}, that $\mathsf{FSR}^{\emptyset}$ is $\omega$-inconsistent.
Although $\mathsf{FSR}$ itself is not $\omega$-inconsistent by Proposition~\ref{conservativity_FSR},
$\mathsf{FSR}$, as we will show, can be seen as \emph{almost} $\omega$-inconsistent in a sense.
For this purpose, we define a computational counterpart $\gamma$ of the above sentence $\sigma$, and prove a McGee-like result for this $\gamma$.

Let $f'(x,y,z)$ be a ternary primitive function symbol such that the following hold for every $\Lr$-sentence $A$:
\begin{align}
f'(g, 0, \ulcorner A \urcorner) &= \ulcorner A \urcorner, \notag \\
f'(g, n, \ulcorner A \urcorner) &= \ulcorner (g \cdot (n-1)){\underbrace{\T \ulcorner (g \cdot (n-2))\T \ulcorner \cdots (g \cdot 0) \T}_\text{$n$ occurrences of $\T$}} \ulcorner A \urcorner \urcorner ^{\cdots} \urcorner \ \ \text{for $n \geq 1$.} \notag
\end{align}
Let $ g _x \T y$ abbreviate the $\mathcal{L}_R$-formula $(g \cdot x) \T(f'(g,x,y))$.
If $ g _x \T y$ holds, we shall say that the sequence $ \langle g \cdot x, g \cdot (x-1), \dots, g \cdot 0 \rangle $ \emph{sequentially} realises $y$.

By the Diagonal lemma, we define a sentence $\gamma$ such that:
\[
\mathsf{PA} \vdash \gamma \leftrightarrow \neg \exists g \forall x ( g _x \T \ulcorner \gamma \urcorner ).  
\]
Hence, $\gamma$ informally says that there is no infinite sequent $\langle g \cdot 0, g \cdot 1, \dots \rangle$ such that every initial segment in reverse order sequentially realises $\gamma$.
Then, our aim is to prove that $\mathsf{FSR}$ realises $\gamma$, in a way that there is an enumeration $g$ such that $\vec{g_n} \T \ulcorner \gamma \urcorner$ is derivable for each numeral $n$.
Moreover, to derive $\gamma$ itself, the assumption $\pole = \emptyset$ is shown to be necessary and sufficient over $\mathsf{FSR}$, and therefore $\omega$-inconsistency by means of $\gamma$ occurs in $\mathsf{FSR}^{\emptyset}$.


\begin{proposition}\label{almost-omega-inconsistency}
Let $\gamma$ be as above.
\begin{enumerate}
\item There exists a term $s$ such that $\mathsf{FSR} \vdash s \in \lvert \gamma \rvert$.

\item There exists a numeral $g$ such that $\mathsf{FSR} \vdash g_n T \ulcorner \gamma \urcorner$ for each $n$.

\item $\mathsf{FSR} \vdash \gamma \leftrightarrow \pole = \emptyset$. Thus, $\mathsf{FSR}^{\emptyset}$ is $\omega$-inconsistent.
\end{enumerate}
\end{proposition}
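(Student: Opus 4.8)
The plan is to establish the equivalence in item~3 first and then recover items~1 and~2 from it, since the easy half of the equivalence is what item~2 builds on and the hard half is precisely the engine of the $\omega$-inconsistency. For the direction $\gamma \to \pole = \emptyset$ I would argue the contrapositive $\pole \neq \emptyset \to \neg\gamma$ inside $\mathsf{FSR}$. Assuming $a \in \pole$, Lemma~\ref{lem:continuation_constant_GCR}(2) supplies the single realiser $\mathsf{k}_{\pole} \cdot a$ of \emph{every} $\Lr$-sentence. Taking $g$ to be a numeral for the constant function $x \mapsto \mathsf{k}_{\pole} \cdot a$, each instance $g_x \T \ulcorner\gamma\urcorner$, i.e. $(\mathsf{k}_{\pole}\cdot a) \T f'(g,x,\ulcorner\gamma\urcorner)$, holds because $f'(g,x,\ulcorner\gamma\urcorner)$ provably codes an $\Lr$-sentence; hence $\forall x(g_x\T\ulcorner\gamma\urcorner)$ and so $\neg\gamma$. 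This half is routine.

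The converse $\pole = \emptyset \to \gamma$ is the crux, and is where McGee enters. Under an empty pole the realiser slot of $\T$ is inert, since $m \T \ulcorner A \urcorner \leftrightarrow \forall b\,\neg(b \F \ulcorner A \urcorner)$ holds for every $m$, so the computational prefixes $g\cdot x$ decorating the nested occurrences of $\T$ inside $f'(g,x,\ulcorner\gamma\urcorner)$ carry no truth-theoretic content. I would use this to show that, over $\mathsf{FSR}^{\emptyset}$, the sentence $\gamma$ is provably equivalent to $\mathcal{T}_{\mathsf{FS}}(\sigma)$, the image under the truth-definition of Theorem~\ref{thm:truth-def-FS} of McGee's diagonal sentence $\sigma$ of Fact~\ref{fact:McGee-FSR_emp}. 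Granting this equivalence, Theorem~\ref{thm:truth-def-FS}(1) applied to $\mathsf{GCT}+\mathsf{NEC}\vdash\sigma$ gives $\mathsf{FSR}^{\emptyset}\vdash\mathcal{T}_{\mathsf{FS}}(\sigma)$, hence $\mathsf{FSR}^{\emptyset}\vdash\gamma$. I expect the equivalence $\gamma\leftrightarrow\mathcal{T}_{\mathsf{FS}}(\sigma)$ over $\mathsf{FSR}^{\emptyset}$ — checking that the prefixes wash out uniformly through the diagonal — to be the main obstacle of the entire proposition.

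With the equivalence available, item~1 is then immediate: by Corollary~\ref{cor:real_pole-empty} there is a realiser of $\pole=\emptyset$, and since $\mathsf{FSR}$ proves $\pole=\emptyset\to\gamma$, self-realisability (Theorem~\ref{explicit_realisability_FSR}) yields a realiser of that implication; converting both by $(\mathrm{NECR})$, applying the modus-ponens realiser $\mathsf{i}$ of Lemma~\ref{lem:partial_compositionality_GCR}(1), and converting back by $(\mathrm{CONECR})$ produces $s\in\lvert\gamma\rvert$. For item~2 I would build the enumeration $g$ by the recursion theorem, maintaining the invariant $\mathsf{FSR}\vdash (g\cdot n)\in\lvert B_n\rvert$, where $B_0=\gamma$ and $B_{n+1}$ is the sentence $(g\cdot n)\T\ulcorner B_n\urcorner$ coded by $f'(g,n+1,\ulcorner\gamma\urcorner)$; the base case is item~1 followed by $(\mathrm{NECR})$. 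In the step, Lemma~\ref{explicit_formal_realisability}(2)(b) reduces realising $(g\cdot n)\T\ulcorner B_n\urcorner$ to formally realising the sentence ``$(g\cdot n)\in\lvert B_n\rvert$'', which is a theorem by the invariant; self-realisability produces a realiser of it and $(\mathrm{NECR})$ renders it formal, giving $(g\cdot(n+1))\in\lvert B_{n+1}\rvert$ and, via $(\mathrm{NECR})$ once more, $g_n\T\ulcorner\gamma\urcorner$. The delicate point here is \emph{uniformity}: the realiser must be read off primitive-recursively from a uniformly generated derivation of the invariant, so that a single index $g$ can be fixed by the recursion theorem even though each $B_n$ refers back to the earlier values $g\cdot j$.

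Finally, the $\omega$-inconsistency of $\mathsf{FSR}^{\emptyset}$ falls out by combining the two established facts for the fixed enumeration $g$ of item~2. Item~2 gives $\mathsf{FSR}\vdash g_n\T\ulcorner\gamma\urcorner$, hence $\mathsf{FSR}^{\emptyset}\vdash g_n\T\ulcorner\gamma\urcorner$, for every numeral $n$; meanwhile $\mathsf{FSR}^{\emptyset}\vdash\gamma$ unfolds (by the fixed point defining $\gamma$) to $\forall g'\exists x\,\neg(g'_x\T\ulcorner\gamma\urcorner)$, and instantiating $g':=g$ yields $\mathsf{FSR}^{\emptyset}\vdash\exists x\,\neg(g_x\T\ulcorner\gamma\urcorner)$. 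Proving each numerical instance $g_n\T\ulcorner\gamma\urcorner$ while also proving $\exists x\,\neg(g_x\T\ulcorner\gamma\urcorner)$ is exactly $\omega$-inconsistency, completing the proposition.
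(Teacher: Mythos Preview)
Your direction $\gamma\to\pole=\emptyset$ and your outline for item~2 match the paper, but the crux $\pole=\emptyset\to\gamma$ has a genuine gap. The ``washing out'' of nested realiser prefixes does not go through: under $\pole=\emptyset$ the axiom $(\mathrm{Ax}_\T)$ makes the \emph{outermost} realiser irrelevant, giving $m\T\ulcorner A\urcorner\leftrightarrow 0\T\ulcorner A\urcorner$, but $\mathsf{GCR}$ contains \emph{no} compositional clause for $b\F\ulcorner s\T t\urcorner$ (the predicates $\T,\F$ are deliberately excluded from $(\mathrm{GCR}_{P1})$--$(\mathrm{GCR}_{P2})$). Hence $0\T\ulcorner(g\cdot(x{-}1))\T\ulcorner B\urcorner\urcorner$ cannot be unfolded or compared with $0\T\ulcorner 0\T\ulcorner B\urcorner\urcorner$: the inner realiser prefixes inside $f'(g,x,\ulcorner\gamma\urcorner)$ remain opaque. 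On top of this, $\gamma$ and $\mathcal{T}_{\mathsf{FS}}(\sigma)$ are fixed points of \emph{different} operators referring to \emph{different} self-codes ($\ulcorner\gamma\urcorner$ versus $\ulcorner\mathcal{T}_{\mathsf{FS}}(\sigma)\urcorner$), so even a perfect prefix collapse would not identify them. The equivalence you flag as the main obstacle is not merely hard; it is not available, and with it your derivation of item~1 (which rests on $\mathsf{FSR}\vdash\pole=\emptyset\to\gamma$) collapses as well.

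The paper proceeds in the opposite order and bypasses any comparison with $\sigma$. The engine is a direct construction (Lemma~\ref{lem:real_gamma}) in $\mathsf{FSR}$, \emph{without} assuming $\pole=\emptyset$, of a term realising $\exists y(y\T\ulcorner\gamma\urcorner)\to\gamma$: from $y\T\ulcorner\gamma\urcorner$ and $\forall x(g_x\T\ulcorner\gamma\urcorner)$ one unfolds $y\T\ulcorner\gamma\urcorner$ via the compositional axioms and Lemma~\ref{lem:partial_compositionality_GCR} to obtain, one $\T$-level down, a realiser of $\neg\exists g'\forall x(g'_x\T\ulcorner\gamma\urcorner)$; feeding in the \emph{shifted} enumeration $v=\lambda a.\,g\cdot(a{+}1)$ and using that $(g\cdot(x{+}1))\T\ulcorner g_x\T\ulcorner\gamma\urcorner\urcorner$ is literally $g_{x+1}\T\ulcorner\gamma\urcorner$ by the definition of $f'$, the inner existential is witnessed and one obtains $s\T\ulcorner\bot\urcorner$. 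Since $\mathsf{PA}$ already gives $\neg\gamma\to\exists y(y\T\ulcorner\gamma\urcorner)$, closure of explicit realisability under classical logic yields the realiser of $\gamma$ (item~1). Item~3 is then immediate: the intermediate $\mathsf{FSR}$-theorem $y\T\ulcorner\gamma\urcorner\to(\forall x(g_x\T\ulcorner\gamma\urcorner)\to s\T\ulcorner\bot\urcorner)$ together with the hypothesis $\pole=\emptyset$ (which forces $s\T\ulcorner\bot\urcorner\to\bot$) gives $\gamma$ outright, and here the ordinary deduction theorem applies since no instance of $(\mathrm{NECR})$ or $(\mathrm{CONECR})$ is crossed after the hypothesis is introduced.
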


Before proving the proposition, let us observe an immediate consequence.
We denote by $\mathsf{Ref}(\mathsf{FSR})$ the system $\mathsf{FSR}$ augmented with the \emph{reflection principle for} $\mathsf{FSR}$:
\[
\forall x \big( \mathrm{Bew}_{\mathsf{FSR}}\ulcorner A(\dot{x}) \urcorner \to A(x) \big), \ \text{for each $A(x) \in \Lr$,} 
\]
where $\mathrm{Bew}_{\mathsf{FSR}}(x)$ is a canonical derivability predicate for $\mathsf{FSR}$.

Due to $\omega$-inconsistency of $\mathsf{FS}$, it is known that the result of extending $\mathsf{FS}$ with the reflection principle for $\mathsf{FS}$ is inconsistent (cf.~\cite[Corollary~14.39]{halbach2014axiomatic}). 
In the case of $\mathsf{FSR}$, we obtain instead non-emptiness of pole.

\begin{corollary}\label{cor:McGee-FSR}
There exists a closed term $t$ such that 
$\mathsf{Ref}(\mathsf{FSR}) \vdash t \in \lvert  \bot \rvert$.
Thus, $\mathsf{Ref}(\mathsf{FSR}) \vdash \pole \neq \emptyset$.
\end{corollary}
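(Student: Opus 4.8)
The plan is to run McGee's argument at the level of realisers: just as uniform reflection renders the $\omega$-inconsistent $\mathsf{FS}$ inconsistent, here it will let us manufacture a realiser of $\bot$. Abbreviate by $C$ the sentence $\exists g\forall x(g_x\T\ulcorner\gamma\urcorner)$, so that the fixed-point equation reads $\mathsf{PA}\vdash\gamma\leftrightarrow\neg C$. Proposition~\ref{almost-omega-inconsistency} supplies the two ingredients: a term $s$ with $\mathsf{FSR}\vdash s\in\lvert\gamma\rvert$, and a fixed numeral $g$ with $\mathsf{FSR}\vdash g_n\T\ulcorner\gamma\urcorner$ for every $n$. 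Since $\mathsf{PA}\vdash\gamma\to\neg C$, self realisability (Theorem~\ref{explicit_realisability_FSR}) also yields a term $e$ with $\mathsf{FSR}\vdash e\in\lvert\gamma\to\neg C\rvert$, which lets us avoid relying on a mere syntactic identification of $\gamma$ with $C\to\bot$. Thus, once a formal realiser of $C$ has been produced inside $\mathsf{Ref}(\mathsf{FSR})$, two applications of the modus-ponens combinator $\mathsf{i}$ of Lemma~\ref{lem:partial_compositionality_GCR} (after passing to $\T$ via $(\mathrm{NECR})$) will give a realiser of $\bot$.

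The realiser of $C$ is where reflection enters. First I would uniformise the family from Proposition~\ref{almost-omega-inconsistency}: applying Theorem~\ref{explicit_realisability_FSR} to the derivations of $g_n\T\ulcorner\gamma\urcorner$ produces realisers of these sentences, and since that construction is primitive recursive in the derivation I expect a single numeral $b$ with $\mathsf{FSR}\vdash(b\cdot n)\T\ulcorner g_n\T\ulcorner\gamma\urcorner\urcorner$ for every $n$. The uniformity of these proofs being formalisable, $\mathsf{PA}$ proves $\forall x\,\mathrm{Bew}_{\mathsf{FSR}}\ulcorner(b\cdot\dot x)\T\ulcorner g_{\dot x}\T\ulcorner\gamma\urcorner\urcorner\urcorner$, and a single instance of the reflection principle of $\mathsf{Ref}(\mathsf{FSR})$ delivers $\mathsf{Ref}(\mathsf{FSR})\vdash\forall x((b\cdot x)\T\ulcorner g_{\dot x}\T\ulcorner\gamma\urcorner\urcorner)$.

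From this uniform realiser I would assemble $C$ compositionally. The universal combinator $\mathsf{u}$ of Lemma~\ref{lem:partial_compositionality_GCR} turns the previous line into $(\mathsf{u}\cdot b)\T\ulcorner\forall x(g_x\T\ulcorner\gamma\urcorner)\urcorner$, and the existential combinator $\mathsf{e}$, instantiated at the witness $g$, yields $p:=\mathsf{e}\cdot\langle\mathsf{u}\cdot b,g\rangle$ with $p\T\ulcorner C\urcorner$. Converting $s\in\lvert\gamma\rvert$ and $e\in\lvert\gamma\to\neg C\rvert$ to formal realisability by $(\mathrm{NECR})$ and applying $\mathsf{i}$ twice gives $(\mathsf{i}\cdot\langle\mathsf{i}\cdot\langle e,s\rangle,p\rangle)\T\ulcorner\bot\urcorner$. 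As $\bot$ is the atomic sentence $0=1$, Lemma~\ref{explicit_formal_realisability} collapses formal and explicit realisability here, so $t:=\mathsf{i}\cdot\langle\mathsf{i}\cdot\langle e,s\rangle,p\rangle$ witnesses $\mathsf{Ref}(\mathsf{FSR})\vdash t\in\lvert\bot\rvert$. For the final clause, $t\in\lvert\bot\rvert$ unfolds to $\forall a(a\in\lVert\bot\rVert\to\langle t,a\rangle\in\pole)$; since $0\in\lVert\bot\rVert$ holds vacuously (as $0=1$ is false), instantiation gives $\langle t,0\rangle\in\pole$, hence $\pole\neq\emptyset$.

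The main obstacle is the uniformity underwriting the single reflection step. One must verify that the realiser-construction of Theorem~\ref{explicit_realisability_FSR}, applied to the uniform family of derivations supplied by Proposition~\ref{almost-omega-inconsistency}, is genuinely given by one recursive index $b$ as a function of $n$, and that the resulting uniform provability is $\mathsf{PA}$-provable rather than merely true in the metatheory. The remainder is bookkeeping: keeping the code-terms $\ulcorner g_{\dot x}\T\ulcorner\gamma\urcorner\urcorner$ aligned across the combinator applications and tracking the $(\mathrm{NECR})$/$(\mathrm{CONECR})$ passages between $\in\lvert\cdot\rvert$ and $\T$, both of which are routine given Lemmas~\ref{explicit_formal_realisability} and~\ref{lem:partial_compositionality_GCR}.
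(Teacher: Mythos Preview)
Your argument is correct, but the paper's route is shorter because it avoids the detour through self-realisability that you flag as the ``main obstacle''. The point is that the sequence $g$ from Proposition~\ref{almost-omega-inconsistency}(2) is \emph{already} the uniform index you are looking for: by the definition of $f'$, the statement $g_{n+1}\T\ulcorner\gamma\urcorner$ literally unfolds to $(g\cdot(n{+}1))\T\ulcorner g_n\T\ulcorner\gamma\urcorner\urcorner$, so a single application of $(\mathrm{CONECR})$ gives $\mathsf{FSR}\vdash (g\cdot(n{+}1))\in\lvert g_n\T\ulcorner\gamma\urcorner\rvert$ for each $n$. There is no need to re-run Theorem~\ref{explicit_realisability_FSR} and worry about whether its realiser construction is uniform; the index $\lambda n.\,g\cdot(n{+}1)$ is explicit and the family of derivations is manifestly primitive recursive, so the reflection step $\mathsf{Ref}(\mathsf{FSR})\vdash\forall x\bigl((g\cdot(x{+}1))\in\lvert g_x\T\ulcorner\gamma\urcorner\rvert\bigr)$ is immediate.

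The paper then stays at the explicit $\lvert\cdot\rvert$ level throughout: the combinators of Lemma~\ref{lem:partial_compositionality_GCR} apply equally to explicit realisability (the clauses for $\lVert\forall\rVert$, $\lVert{\to}\rVert$ have the same shape), yielding $s'\in\lvert\exists g\forall x(g_x\T\ulcorner\gamma\urcorner)\rvert$, i.e.\ $s'\in\lvert\neg\gamma\rvert$, which combined with $s\in\lvert\gamma\rvert$ gives $t\in\lvert\bot\rvert$ directly. Your approach instead lifts everything to the formal $\T$ side via $(\mathrm{NECR})$, works there, and then invokes Lemma~\ref{explicit_formal_realisability} at the end to descend to $\lvert\bot\rvert$; this is fine, just longer. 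In short: the paper exploits that Proposition~\ref{almost-omega-inconsistency}(2) was proved precisely by iterated self-realisation, so the sequence $g$ already packages the uniform realiser data, whereas you rebuild that package from scratch.
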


\begin{proof}

By item~2 of Proposition~\ref{almost-omega-inconsistency}, $\mathsf{FSR}$ derives $g \cdot (n+1) \in \lvert g _n \T \ulcorner \gamma \urcorner \rvert$ for each $n$, which is formalisable by the reflection principle: $\mathsf{Ref}(\mathsf{FSR}) \vdash \forall x \big( g \cdot (x+1) \in \lvert g _x \T \ulcorner \gamma \urcorner \rvert \big)$.
Hence, by Lemma~\ref{lem:partial_compositionality_GCR}, we get a term $s$ such that $\mathsf{Ref}(\mathsf{FSR}) \vdash s \in \lvert \forall x ( g _x \T \ulcorner \gamma \urcorner ) \rvert$.
Thus, again by Lemma~\ref{lem:partial_compositionality_GCR}, we also have a term $s'$ such that $\mathsf{Ref}(\mathsf{FSR}) \vdash s' \in \lvert \exists g \forall x ( g _x \T \ulcorner \gamma \urcorner ) \rvert$.
To summarise, we get realisers for $\gamma$ and $\neg \gamma$ respectively, and hence $\bot$ is realisable in $\mathsf{Ref}(\mathsf{FSR})$. This means that the pole must be non-empty. \qed
\end{proof}

For the proof of Proposition~\ref{almost-omega-inconsistency}, we first want to find a realiser of $\gamma$.
By the definition, $\neg \gamma$ implies $\exists g \big( (g \cdot 0) \T \ulcorner \gamma \urcorner \big)$; thus, we have:
\[
\mathsf{PA} \vdash \neg \gamma \to \exists y (y \T \ulcorner \gamma \urcorner),
\]
which implies, by Lemma~\ref{explicit_realisability_PA}, that $\neg \gamma \to \exists y (y \T \ulcorner \gamma \urcorner)$ is realisable in $\mathsf{FSR}$.
Thus, if we can also prove that $\exists y (y \T \ulcorner \gamma \urcorner) \to \gamma$ is realisable in $\mathsf{FSR}$, then the realisability of $\gamma$ itself follows, as required.
The following is a subsidiary lemma:

\begin{lemma}\label{lem:real_bot}
In $\mathsf{GCR}$, the following are derivable.
\begin{enumerate}

\item $\lambda a. \langle (a)_0, (a)_1 \rangle \in \lvert x \in \lVert \bot \rVert \rvert$.

\item $x \in \lVert x \in \pole \rVert$.

\item $x \in \lvert y \T \ulcorner \bot  \urcorner \rvert \to \lambda b. \langle x,0,\lambda a. \langle (a)_0, (a)_1 \rangle,y,0 \rangle \in \lvert \bot \rvert$.
\end{enumerate}
\end{lemma}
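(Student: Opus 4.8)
The plan is to prove each of the three items by unwinding the definition of explicit realisation and using the basic axioms of $\mathsf{GCR}$, especially $(\mathrm{GCR}_{P1})$, $(\mathrm{GCR}_{P2})$, and $(\mathrm{Ax}_{\pole})$, together with Lemma~\ref{lem:truth-condition-P} and the substitution Lemma~\ref{substitution_explicit_realisation}. Recall that $\bot$ abbreviates the false equation $0=1$, so $\lVert \bot \rVert$ is governed by the clause for atomic $\mathcal{L}$-formulas: $x \in \lVert \bot \rVert = (\bot \to x \in \pole)$, and since $\bot$ is false this is provably trivial in $\mathsf{PA}$.

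For item~1, I would expand $x \in \lvert x \in \lVert \bot \rVert \rvert$ — note the outer realisation is of the \emph{formula} $x \in \lVert \bot \rVert$, which by the atomic clause is $\bot \to x \in \pole$, i.e.\ an implication. So I unfold $s \in \lvert A \to B \rvert$ for $A = \bot$ and $B = (x \in \pole)$: a refuter $a$ of this implication splits as $(a)_0 \in \lvert \bot \rvert$ and $(a)_1 \in \lVert x \in \pole \rVert$. From $(a)_0 \in \lvert \bot \rvert = \forall b(b \in \lVert \bot \rVert \to \langle (a)_0, b\rangle \in \pole)$, and since everything realises the false $\lVert \bot \rVert$ (as $0 \in \lVert\bot\rVert$ is provable), one extracts a pole-membership; unwinding $\lVert x \in \pole\rVert = (x\in\pole \to (a)_1 \in \pole)$ and combining via $(\mathrm{Ax}_{\pole})$ shows the pairing term $\lambda a.\langle (a)_0,(a)_1\rangle$ lands in the pole. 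Item~2 is the most direct: $x \in \lVert x \in \pole \rVert$ is by definition $x \in \pole \to x \in \pole$, which is an $\mathcal{L}_R$-tautology and hence derivable outright in $\mathsf{GCR}$.

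For item~3, the idea is to assume $x \in \lvert y \T \ulcorner \bot \urcorner \rvert$ and build a realiser of $\bot$, i.e.\ show the exhibited term contradicts every refuter. Using Lemma~\ref{explicit_formal_realisability}(2b), $x \in \lvert y \T \ulcorner\bot\urcorner\rvert$ is equivalent to $x \T (y \mathbin{\dot\in} \lvert \ulcorner\bot\urcorner\rvert)$; I would identify $y \mathbin{\dot\in} |\ulcorner\bot\urcorner| = \ulcorner \dot y \in \bot \urcorner$ and unfold $\lvert \bot \rvert$ at the object level as the universal statement $\forall a(a \in \lVert \bot\rVert \to \langle y,a\rangle\in\pole)$. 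Feeding the realiser from item~1 (which inhabits $\lvert x \in \lVert \bot\rVert\rvert$, and via Lemma~\ref{explicit_formal_realisability} corresponds to a realisation of $\ulcorner \dot x \in \lVert \bot\rVert\urcorner$) into $x$ by means of $(\mathrm{Ax}_{\T})$ and the partial compositionality of Lemma~\ref{lem:partial_compositionality_GCR} produces a pole element $\langle y, 0\rangle$-style contradiction, from which $\langle t, b\rangle \in \pole$ follows for the exhibited $t = \lambda b.\langle x,0,\lambda a.\langle (a)_0,(a)_1\rangle, y, 0\rangle$ by applying $(\mathrm{Ax}_{\pole})$ repeatedly to strip the pairing structure.

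The main obstacle will be item~3: correctly matching the nested pairing components of the exhibited constant against the compositionality lemmas, since the argument shuttles between the explicit predicates $\lvert\cdot\rvert,\lVert\cdot\rVert$ and the formal predicates $\T,\F$ via Lemma~\ref{explicit_formal_realisability}, and each application of $(\mathrm{Ax}_\pole)$ only justifies pole-membership of a \emph{pair} once its application-value is already shown to be in the pole. I expect the bookkeeping of which subterm realises which subformula — in particular verifying that feeding the item~1 realiser of $x \in \lVert\bot\rVert$ through the realiser $x$ of $y\T\ulcorner\bot\urcorner$ yields $\langle y,0\rangle \in \pole$ after the appropriate $\mathsf{i}$- and $\mathsf{s}$-style applications — to be the delicate part, while the use of $(\mathrm{Ax}_\pole)$ to absorb the leading $\lambda b$ into the pole is routine.
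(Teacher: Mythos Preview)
Your plans for items~1 and~2 are correct and essentially match the paper's argument; your observation that item~2 unfolds to the tautology $x \in \pole \to x \in \pole$ is in fact more direct than the paper's case split.

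For item~3, however, the route through Lemma~\ref{lem:partial_compositionality_GCR} is misdirected. Applying the combinators $\mathsf{i}$ and $\mathsf{s}$ would yield a realiser of the form $\mathsf{i}\cdot\langle \mathsf{s}\cdot\langle x,0\rangle,\ldots\rangle$, not the specific exhibited term $\lambda b.\langle x,0,f,y,0\rangle$ with $f := \lambda a.\langle (a)_0,(a)_1\rangle$. The paper instead, after passing via Lemma~\ref{explicit_formal_realisability} to $x \in \lvert y \in \lvert\bot\rvert\rvert$, simply \emph{unfolds} this explicit realisation: it is literally the statement
\[
\forall z\bigl(((z)_1)_0 \in \lvert (z)_0 \in \lVert\bot\rVert\rvert \ \land\ ((z)_1)_1 \in \lVert \langle y,(z)_0\rangle \in \pole\rVert \ \to\ \langle x,z\rangle \in \pole\bigr),
\]
and then \emph{instantiates} $z := \langle 0, f, y, 0\rangle$. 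Item~1 (with parameter~$0$) verifies the first conjunct, item~2 (with parameter~$\langle y,0\rangle$) verifies the second, and one reads off $\langle x,0,f,y,0\rangle \in \pole$ directly; a single application of $(\mathrm{Ax}_\pole)$ then handles the leading $\lambda b$. No compositionality combinators enter. Relatedly, your expectation that the argument yields ``$\langle y,0\rangle \in \pole$'' is a misconception: realisability does not imply truth here, so one never concludes $\langle y,0\rangle\in\pole$. Rather, $\langle y,0\rangle$ appears as the \emph{refuter} of the atomic formula $\langle y,0\rangle \in \pole$ (this is exactly item~2), and what lands in the pole is the full tuple $\langle x,0,f,y,0\rangle$.
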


\begin{proof}
Starting with 1, let \( f = \lambda a. \langle (a)_0, (a)_1 \rangle \). Then
\begin{align*}
 f \in \lvert x \in \lVert \bot \rVert \rvert &\Leftrightarrow f \in \lvert \bot \to x \in \pole \rvert \\
&\Leftrightarrow \forall y \big( y \in \lVert \bot \to x \in \pole \rVert \to \langle f, y \rangle \in \pole \big) \\
&\Leftrightarrow \forall y \big( (y)_0 \in \lvert \bot \rvert \land (y)_1 \in \lVert x \in \pole \rVert \to \langle f, y \rangle \in \pole \big). \\
&\Leftarrow \forall y \big( (y)_0 \in \lvert \bot \rvert \land (y)_1 \in \lVert x \in \pole \rVert \to \langle (y)_0, (y)_1 \rangle \in \pole \big). \tag{$\because$ $\mathrm{Ax}_{\pole}$}
\end{align*}
Thus, taking any $y$ such that $ (y)_0 \in \lvert \bot \rvert \land (y)_1 \in \lVert x \in \pole \rVert$, we show $\langle (y)_0, (y)_1 \rangle \in \pole$.
But, $(y)_0 \in \lvert \bot \rvert$ implies that $\langle (y)_0, (y)_1 \rangle \in \pole$, and hence 
the claim is established.

For 2, if $x \in \pole$, then $x \in \pole$ is refuted only by the members of $\pole$, but we have just assumed $x \in \pole$. Therefore, $x \in \lVert x \in \pole \rVert$.
If $x \notin \pole$, then $x \in \pole$ is refuted by any natural number, thus $x \in \lVert x \in \pole \rVert$ is obtained again.

Finally, we show 3 by proving that $\langle x,0,f,y,0 \rangle \in \pole$ whenever $x \in \lvert y \T \ulcorner \bot  \urcorner \rvert$ where \( f \) is as in 1.
Firstly, $x \in \lvert y \T \ulcorner \bot  \urcorner \rvert$  is equivalent to:
\begin{align}
& x \T \ulcorner \dot{y} \in \lvert \bot \rvert \urcorner \tag{$\because$ lem.~\ref{explicit_formal_realisability}} \\
&\Leftrightarrow x \in \lvert y \in \lvert \bot \rvert  \rvert  \tag{$\because$ lem.~\ref{explicit_formal_realisability}} \\
&\Leftrightarrow \forall z \big( z \in \lVert y \in \lvert \bot \rvert \rVert  \to \langle x,z \rangle \in \pole \big) \notag \\
&\Leftrightarrow \forall z \big( z \in \lVert \forall w. \ w \in \lVert \bot \rVert \to \langle y,w \rangle \in \pole \rVert  \to \langle x,z \rangle \in \pole \big) \notag \\
&\Leftrightarrow \forall z \big( (z)_1 \in \lVert (z)_0 \in \lVert \bot \rVert \to \langle y,(z)_0 \rangle \in \pole \rVert  \to \langle x,z \rangle \in \pole \big) \notag \\
&\Leftrightarrow \forall z \big( ((z)_1)_0 \in \lvert (z)_0 \in \lVert \bot \rVert \rvert  \land ((z)_1)_1 \in \lVert \langle y,(z)_0 \rangle \in \pole \rVert  \to \langle x,z \rangle \in \pole \big). \label{lem:real_bot:assump} \notag \\ 
\end{align}
Taking $z : = \langle 0, f , y,0\rangle$, we have, by the parts 1 and 2,
\[
	((z)_1)_0 \in \lvert (z)_0 \in \lVert \bot \rVert \rvert  \land ((z)_1)_1 \in \lVert \langle y,(z)_0 \rangle \in \pole \rVert .
\]
Thus, (\ref{lem:real_bot:assump}) yields $\langle x,z \rangle = \langle x,0,f,y,0 \rangle \in \pole$.
We now put $t := \lambda b. \langle x,0,f,y,0 \rangle$.
Then, we have $t \cdot z \simeq \langle x,0,f,y,0 \rangle \in \pole$ for all $z$.
Hence, $t$ indeed realises $\bot$ by the axiom ($\mathrm{Ax}_{\pole}$). \qed
\end{proof}

\begin{lemma}\label{lem:real_gamma}
There exists a closed term \( t \) such that
$\mathsf{FSR} \vdash t \in \lvert \exists y (y \mathrm{T} \ulcorner \gamma \urcorner) \to \gamma \rvert$.

\end{lemma}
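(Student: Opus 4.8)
The plan is to construct the realiser explicitly in continuation-passing style; the self-realisability route of Theorem~\ref{explicit_realisability_FSR} is unavailable here, because $\exists y(y\T\ulcorner\gamma\urcorner)\to\gamma$ is realisable but \emph{not} a theorem of $\mathsf{FSR}$. Indeed, were it provable, then together with $\mathsf{FSR}\vdash\neg\gamma\to\exists y(y\T\ulcorner\gamma\urcorner)$ (the implication noted just above, which already holds in $\mathsf{PA}$) we would obtain $\mathsf{FSR}\vdash\neg\gamma\to\gamma$, hence $\mathsf{FSR}\vdash\gamma$; applying the translation $\mathcal{T}_{\mathbb{N}}$ from Proposition~\ref{conservativity_FSR} would give $\mathsf{PA}\vdash\mathcal{T}_{\mathbb{N}}(\gamma)$, yet $\mathcal{T}_{\mathbb{N}}(\gamma)$ is a falsity since $\mathcal{T}_{\mathbb{N}}$ collapses every $\T$-atom to $0=0$. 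So a genuine term must be produced by hand.

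Write $C$ for $\exists g\forall x(g_x\T\ulcorner\gamma\urcorner)$, so that $\mathsf{PA}\vdash\gamma\leftrightarrow\neg C$ with $\neg C$ of the form $C\to\bot$. First I would realise $\exists y(y\T\ulcorner\gamma\urcorner)\to\neg C$, whose refutation structure is transparent, and then compose with a realiser of $\neg C\to\gamma$ extracted from the $\mathsf{PA}$-provable equivalence via Lemma~\ref{explicit_realisability_PA}; since realisability is closed under logic, this yields a realiser of $\exists y(y\T\ulcorner\gamma\urcorner)\to\gamma$. A refutation $u$ of the former splits as $(u)_0\in\lvert\exists y(y\T\ulcorner\gamma\urcorner)\rvert$ and $(u)_1\in\lVert\neg C\rVert$, and the latter splits, by $(\mathrm{GCR}_{\to})$, into $((u)_1)_0\in\lvert C\rvert$ and $((u)_1)_1\in\lVert\bot\rVert$. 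The term $t$ I will build feeds the existential-realiser $(u)_0$ the refutation $\langle K,((u)_1)_1\rangle$, where $K$ is a continuation realising $\forall y\,\neg(y\T\ulcorner\gamma\urcorner)$.

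The core is the behaviour of $K$ on an input $\langle y,p\rangle$ with $p\in\lvert y\T\ulcorner\gamma\urcorner\rvert$. By Lemma~\ref{explicit_formal_realisability}, $p$ is (equivalently) a formal realiser of the $\Lr$-sentence $y\in\lvert\gamma\rvert$, i.e.\ of $\forall a(a\in\lVert\gamma\rVert\to\langle y,a\rangle\in\pole)$. Instantiating this universal at the refutation $(u)_1$ by means of the combinator $\mathsf{s}$ of Lemma~\ref{lem:partial_compositionality_GCR} and then applying $\mathsf{i}$, the goal reduces to producing a realiser of the antecedent $(u)_1\in\lVert\gamma\rVert$ and reading off the atom $\langle y,(u)_1\rangle\in\pole$. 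This final bookkeeping is exactly what Lemma~\ref{lem:real_bot} supplies: its first two clauses furnish the canonical realiser/refutation data for the $\bot$- and $\pole$-atoms occurring in $(u)_1\in\lVert\gamma\rVert$, while its third clause converts a realiser of a sentence $y\T\ulcorner\bot\urcorner$ into a realiser of $\bot$, so that the computation bottoms out in a contradictory pair. Closing under $(\mathrm{Ax}_{\T})$ and $(\mathrm{Ax}_{\pole})$ then gives $\langle t,u\rangle\in\pole$, as required.

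The hard part will be the self-referential plumbing of $K$: the continuation mentions $(u)_1$ (a refutation of $\gamma$) yet must be passed back into $(u)_0$, the realiser of the very existential ranging over realisers of $\gamma$, so $t$ has to be defined by the recursion theorem, and one must check that the passage between explicit realisation $\lvert\cdot\rvert,\lVert\cdot\rVert$ and formal realisation $\T,\F$ (Lemma~\ref{explicit_formal_realisability}) commutes with the unfolding of the fixed point $\gamma\leftrightarrow\neg C$. Verifying that all the pole memberships survive this bookkeeping---rather than the individual combinator manipulations, which are routine given Lemma~\ref{lem:partial_compositionality_GCR}---is where the real care is needed.
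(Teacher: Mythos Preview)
Your opening diagnosis is correct---the sentence $\exists y(y\T\ulcorner\gamma\urcorner)\to\gamma$ is indeed not derivable in $\mathsf{FSR}$---but the conclusion you draw from it is not. The paper's proof \emph{does} use self-realisability (Theorem~\ref{explicit_realisability_FSR}); it simply applies it to an intermediate implication that \emph{is} a theorem of $\mathsf{FSR}$, namely
\[
y\T\ulcorner\gamma\urcorner\to\bigl(\forall x(g_x\T\ulcorner\gamma\urcorner)\to s\T\ulcorner\bot\urcorner\bigr)
\]
for an explicitly constructed term $s$. The key idea making this implication provable is a level shift built into the definition of $f'$: the hypothesis $\forall x(g_x\T\ulcorner\gamma\urcorner)$, restricted to successors, reads $\forall x\bigl((g\cdot(x{+}1))\T\ulcorner g_x\T\ulcorner\gamma\urcorner\urcorner\bigr)$, so that the combinator $\mathsf u$ of Lemma~\ref{lem:partial_compositionality_GCR} packages it into a \emph{formal} realiser of $\forall x(g_x\T\ulcorner\gamma\urcorner)$, whence of $C$; combined with $f(y)\T\ulcorner\neg C\urcorner$ (obtained from $y\T\ulcorner\gamma\urcorner$) this yields $s\T\ulcorner\bot\urcorner$. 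Self-realisability then produces a realiser of the displayed implication, and Lemma~\ref{lem:real_bot}(3) converts the resulting $r\in\lvert s\T\ulcorner\bot\urcorner\rvert$ into $q\in\lvert\bot\rvert$. Your plan never exploits this shift, and without it the argument does not close.

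Concretely, your construction breaks at the step ``producing a realiser of the antecedent $(u)_1\in\lVert\gamma\rVert$''. To apply $\mathsf i$ after instantiating $p\T\ulcorner\dot y\in\lvert\gamma\rvert\urcorner$ at $a:=(u)_1$, you need a \emph{formal} realiser $q\T\ulcorner(\dot u)_1\in\lVert\gamma\rVert\urcorner$. What you actually possess is only the \emph{explicit} hypothesis $(u)_1\in\lVert\neg C\rVert$, and there is no mechanism in $\mathsf{FSR}$ that promotes an assumption to a formal realiser under that assumption: $(\mathrm{NECR})$ is a rule, applicable only to outright theorems. Lemma~\ref{lem:real_bot} supplies explicit, not formal, data, so it does not bridge this gap; and the recursion-theorem ``plumbing'' you anticipate cannot help either, because the obstacle is the object/meta level crossing itself rather than the construction of a term. (Your datum $((u)_1)_0\in\lvert C\rvert$ is likewise unused, and using it runs into the same level mismatch one step deeper.) The missing idea is precisely the $g_{x+1}$ shift together with the application of self-realisability to the resulting \emph{provable} implication.
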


\begin{proof}
Note that $\exists y (y \T \ulcorner \gamma \urcorner) \to \gamma$ is logically equivalent to 
\[\forall y, g \Big( y \T \ulcorner \gamma \urcorner \to \big( \forall x ( g _x \T \ulcorner \gamma \urcorner) \to \bot \big) \Big).
\]
To find a realiser for this formula, it suffices to construct a term $s$ with parameters $y,g$ such that:
\[
\mathsf{FSR} \vdash y\T\ulcorner \gamma \urcorner \to \big( \forall x ( g _x \T \ulcorner \gamma \urcorner) \to s \T \ulcorner \bot \urcorner \big).
\]
In fact, by self-realisability of $\mathsf{FSR}$ and Lemma~\ref{lem:partial_compositionality_GCR}, we can use such a term $s$ to get a term $r$ with parameters $y,g,a,b$ such that: 
\[
\mathsf{FSR} \vdash a \in \lvert y\T\ulcorner \gamma \urcorner \rvert \land b \in \lvert \forall x ( g _x \T \ulcorner \gamma \urcorner ) \rvert \to r \in \lvert s \T \ulcorner \bot \urcorner \rvert,
\]
which, by Lemma~\ref{lem:real_bot}, implies the existence of a term $q$ with parameters $y,g,a,b$ such that:
\[
\mathsf{FSR} \vdash a \in \lvert y \T \ulcorner \gamma \urcorner \rvert \land b \in \lvert \forall x ( g _x \T \ulcorner \gamma \urcorner ) \rvert \to q \in \lvert \bot \rvert,
\]
from which we can easily define a desired term that realises $\exists y(y \T \ulcorner \gamma \urcorner) \to \gamma$ by Lemma~\ref{lem:partial_compositionality_GCR}.

Now we work in $\mathsf{FSR}$.
To find a term $s$ as above, we take any $y,g$ and assume $y \T \ulcorner \gamma \urcorner $ and $\forall x ( g _x \T \ulcorner \gamma \urcorner)$.
Since $\gamma$ is equivalent to $\neg \exists g \forall x ( g _x \T \ulcorner \gamma \urcorner )$, there exists a term $f(y)$ with the parameter $y$ such that $y \T \ulcorner \gamma \urcorner $ implies the following:
\[
f(y) \T \ulcorner \neg \exists g \forall x ( g _x \T \ulcorner \gamma \urcorner ) \urcorner,
\]
which, by Lemma~\ref{lem:partial_compositionality_GCR}, yields:
\begin{align}
\forall z \big( z \T \ulcorner \exists g \forall x ( g _x \T \ulcorner \gamma \urcorner ) \urcorner \to (\mathsf{i} \cdot \langle f(y), z \rangle) \T \ulcorner \bot \urcorner \big). \label{lem:real_gamma:exist}
\end{align}
By Lemma~\ref{lem:partial_compositionality_GCR}, we have $\forall w \big( w \T \ulcorner \forall x ( g _x \T \ulcorner \gamma \urcorner ) \urcorner \to (\mathsf{e} \cdot \langle w,g \rangle) \T \ulcorner \exists g \forall x ( g _x \T \ulcorner \gamma \urcorner ) \urcorner \big)$, thus the following holds by letting $z := \mathsf{e} \cdot \langle w,g\rangle$ in (\ref{lem:real_gamma:exist}):
\begin{align}
\forall w \big( w \T \ulcorner \forall x ( g _x \T \ulcorner \gamma \urcorner ) \urcorner \to (\mathsf{i} \cdot \langle f(y), \mathsf{e} \cdot \langle w,g \rangle \rangle) \T \ulcorner \bot \urcorner \big). \label{lem:real_gamma:univ}
\end{align}
By Lemma~\ref{lem:partial_compositionality_GCR}, we have $\forall v \Big( \forall x \big( (v \cdot x) \T \ulcorner g _x \T \ulcorner \gamma \urcorner \urcorner \big) \to (\mathsf{u} \cdot v) \T \ulcorner \forall x ( g _x \T \ulcorner \gamma \urcorner ) \urcorner \Big)$;
thus, we get the following by letting $w := \mathsf{u} \cdot v$ in (\ref{lem:real_gamma:univ}):
\begin{align}
\forall v \Big( \forall x \big( (v \cdot x) \T \ulcorner g _x \T \ulcorner \gamma \urcorner \urcorner \big) \to (\mathsf{i} \cdot \langle f(y), \mathsf{e} \cdot \langle \mathsf{u} \cdot v,g \rangle \rangle) \T \ulcorner \bot \urcorner \Big). \label{lem:real_gamma:g_x+1}
\end{align}
Since $(\lambda a. g \cdot (a+1)) \cdot x \simeq g \cdot (x+1)$ is true,
we obtain the following by substituting $\lambda a. g \cdot (a+1)$ for $v$ in (\ref{lem:real_gamma:g_x+1}):
\[
\forall x (  {g_{x+1}} \T \ulcorner \gamma \urcorner ) \to (\mathsf{i} \cdot \langle f(y), \mathsf{e} \cdot \langle \mathsf{u} \cdot (\lambda a. g \cdot (a+1)),g \rangle \rangle) \T \ulcorner \bot \urcorner.
\]
By logical weakening, we also have:
\[
\forall x (  {g_{x}} \T \ulcorner \gamma \urcorner ) \to (\mathsf{i} \cdot \langle f(y), \mathsf{e} \cdot \langle \mathsf{u} \cdot (\lambda a. g \cdot (a+1)),g \rangle \rangle) \T \ulcorner \bot \urcorner,
\]
whose antecedent is one of the initial assumptions.
Therefore, we obtain the succedent.
Consequently, $s := \mathsf{i} \cdot \langle f(y), \mathsf{e} \cdot \langle \mathsf{u} \cdot (\lambda a. g \cdot (a+1)),g \rangle \rangle$ is a required term with the parameters $y,g$. \qed


\end{proof}

Now, we can verify Proposition~\ref{almost-omega-inconsistency}:

\begin{proof}[of Proposition~\ref{almost-omega-inconsistency}]
\begin{enumerate}
\item We work in $\mathsf{FSR}$.
By Lemma~\ref{lem:real_gamma}, there exists a realiser for $\exists y (y \T \ulcorner \gamma \urcorner) \to \gamma$.
Moreover, by the definition of $\gamma$, we already have a realiser for $\neg \gamma \to \exists y (y \T \ulcorner \gamma \urcorner)$.
Since realisability is closed under classical logic, we get a realiser for $\gamma$. 

\item By item~1, we have a term $t_0$ such that $\mathsf{FSR} \vdash t_0 \in \lvert \gamma \rvert$.
Thus, by $\mathrm{NECR}$, we also get $\mathsf{FSR} \vdash t_0 \mathrm{T} \ulcorner \gamma \urcorner$.
By self-realisability of $\mathsf{FSR}$, there exists a term $t_1$ such that $\mathsf{FSR} \vdash t_1 \in \lvert t_0 \T \ulcorner \gamma \urcorner \rvert$.
In this way, we can effectively define an infinite sequence whose initial segments sequentially realise $\gamma$.
So, we take an enumeration $g$ such that $\mathsf{FSR} \vdash g_n \T \ulcorner \gamma \urcorner$ for each numeral $n$.

\item Firstly, supposing $\pole = \emptyset$ in $\mathsf{FSR}$, we prove $\gamma$.
In the proof of Lemma~\ref{lem:real_gamma}, we get a term $s$ such that:
\[
\mathsf{FSR} \vdash y\T\ulcorner \gamma \urcorner \to \big( \forall x ( g _x \T \ulcorner \gamma \urcorner) \to s \T \ulcorner \bot \urcorner \big).
\]
In the presence of $\pole = \emptyset$, the formula $s \T \ulcorner \bot \urcorner$ is equivalent to $\bot$.
Thus, $\gamma$ follows.

Secondly, assume $\gamma$.
For a contradiction, we further suppose that $\pole \neq \emptyset$.
Then, by Lemma~\ref{lem:continuation_constant_GCR}, there exists a number $x$ which realises every $\Lr$-sentence.
We now take a constant function $g$ that always returns $x$, i.e., $\mathsf{PA} \vdash \forall y (g \cdot y \simeq x)$.
Then, $\gamma \ \big( \leftrightarrow \neg \exists g \forall y (g_y \T \ulcorner \gamma \urcorner) \big)$ implies $\exists y \neg \big(x \T (f'(g, y, \ulcorner \gamma \urcorner)) \big)$, which implies that some $\Lr$-sentence is not realised by $x$, a contradiction.
Therefore, $\gamma$ implies $\pole = \emptyset$. \qed

\end{enumerate}
\end{proof}


\section{Conclusion}\label{sec:classical_real_concl}
In this paper, we have generalised the framework of compositional realisability of~\cite{hayashi2024compositional} to that of type-free realisability based on Friedman--Sheard truth~\cite{friedman1987axiomatic}.
We then observed some results on the proof-theoretic relationship between Friedman--Sheard realisability and Friedman--Sheard truth.
We list some questions concerning $\mathsf{FSR}$:

\begin{enumerate}
\item Let $\mathsf{FSR}^{++}$ be $\mathsf{FSR}$ augmented with the following strong reflection rule:
\begin{displaymath}
\infer[]{A}{ s \mathrm{T} \ulcorner A \urcorner}, \text{ for every closed term $s$ and every $\Lr$-sentence $A$}.
\end{displaymath}
Is $\mathsf{FSR}^{++}$ consistent? In particular, does it have the same theorems as $\mathsf{FSR}^{\emptyset}$?

\item Is $\mathsf{FS}$ relatively truth definable in $\mathsf{FSR}^{\emptyset}$? 
Similarly, is $\mathsf{FS}$ realisable in $\mathsf{FSR}$?

\item Are there any definitions of $x \in \lvert A \rvert$ and $x \in \lVert A \rVert$ and the corresponding Friedman--Sheard system such that recursion theorem is not used in the formulation while self-realisability still holds?
\end{enumerate}

We conclude this paper with two directions for future work.
Firstly, our formulation of classical realisability is based on one of Friedman and Sheard's systems.
However, given that many other self-referential approaches to truth have been developed (cf.~\cite{cantini1990theory,field2008saving,friedman1987axiomatic,kripke1976outline,leitgeb2005truth}),
it will also be natural to consider classical realisability based on those approaches.
Secondly, note that this paper and \cite{hayashi2024compositional} are mainly concerned with proof-theoretic study.
So, perhaps there may be room for further model-theoretic investigations. In particular, model-theoretic results on typed or type-free truth might be generalised to those for classical realisability.

\begin{credits}
\subsubsection{\ackname}

\end{credits}

%
%
%
 \bibliographystyle{splncs04}
 \bibliography{inyou}
%





\end{document}